\newcommand{\norm}[1]{\left\lVert #1 \right\rVert}
\newcommand{\inner}[2]{\left\langle #1,#2 \right\rangle}
\newcommand{\abs}[1]{\left|#1\right|}
\newcommand{\un}[0]{\boldsymbol{\nu}}
\renewcommand{\div}[0]{\textnormal{div}\,}
\newcommand{\sdiv}[0]{\textnormal{div}}
\newcommand{\curl}[0]{\textnormal{\textbf{curl}}\,}
\newcommand{\scurl}[0]{\textnormal{curl}}
\newcommand{\Hcurl}[1]{\mathbf{H}(\textnormal{\textbf{curl}},#1)}
\newcommand{\Hdiv}[1]{\mathbf{H}(\textnormal{div},#1)}
\newcommand{\inncurl}[3]{(#1,#2)_{\textnormal{\textbf{curl}},#3}}
\newcommand{\normcurl}[2]{\norm{#1}_{\textnormal{\textbf{curl}},#2}}
\newcommand{\Hcurlloc}[1]{\textbf{H}_{\textnormal{loc}}(\textbf{curl},#1)}
\newcommand{\veccurl}[0]{\mathbf{curl}}
\newcommand{\boldphi}[0]{\boldsymbol{\varphi}}
\newcommand{\boldPsi}[0]{\boldsymbol{\Psi}}
\newcommand{\boldH}[0]{\boldsymbol{\mathcal{H}}}
\newcommand{\supp}{\textnormal{supp}}
\newcommand*\myat{{\fontfamily{ptm}\selectfont @}}
\renewcommand{\Im}[0]{\textnormal{Im}}
\renewcommand{\Re}[0]{\textnormal{Re}}
\renewenvironment{proof}{{\bfseries Proof.}}{\qed} %Change appearance of "Proof." to bold rather than italics
\newtheorem{theorem}{Theorem}
\newtheorem{lemma}[theorem]{Lemma}
\newtheorem{definition}[theorem]{Definition}
\newtheorem{corollary}[theorem]{Corollary}
\newtheorem{remark}[theorem]{Remark}
\newtheorem{assumption}[theorem]{Assumption}
\newtheorem{proposition}[theorem]{Proposition}
\normalfont\fontsize{10}{15}\bfseries}{\thesection.\!\!\!\!}{1em}{}
\normalfont\fontsize{10}{15}\bfseries}{\thesubsection.\!\!\!\!}{1em}{}
\numberwithin{theorem}{section}
\title{Existence and stability of electromagnetic Stekloff eigenvalues with a trace class modification}
\author{Samuel Cogar\thanks{Department of Mathematics, Rutgers University, Piscataway, NJ 08854 (samuel.cogar\myat rutgers.edu}}
\date{}
\begin{document}

\maketitle

\begin{abstract}

A recent area of interest is the development and study of eigenvalue problems arising in scattering theory that may provide potential target signatures for use in nondestructive testing of materials. We consider a generalization of the electromagnetic Stekloff eigenvalue problem that depends upon a smoothing parameter, for which we establish two main results that were previously unavailable for this type of eigenvalue problem. First, we use the theory of trace class operators to prove that infinitely many eigenvalues exist for a sufficiently high degree of smoothing, even for an absorbing medium. Second, we leverage regularity results for Maxwell's equations in order to establish stability results for the eigenvalues with respect to the material coefficients, and we show that this generalized class of Stekloff eigenvalues converges to the standard class as the smoothing parameter approaches zero.

\end{abstract}

\begin{keywords}
	inverse scattering, nondestructive testing, non-selfadjoint eigenvalue problems, Maxwell's equations, Laplace-Beltrami operator
\end{keywords}
%\keywords{inverse scattering, nondestructive testing, non-selfadjoint eigenvalue problems, Laplace-Beltrami operator}

\begin{AMS}
	35J25, 35P05, 35P25, 35R30
\end{AMS}
%\AMS{35J25, 35P05, 35P25, 35R30}

\section{Introduction} \label{sec_intro}

It is important in many areas of science and engineering to be able to determine whether a given material is defective without compromising the integrity of the material in the process. Such techniques for nondestructive evaluation often involve interrogating the medium with a prescribed acoustic, elastic, or electromagnetic incident wave and observing the resulting scattering effects, and the resulting data is used to deduce information about the medium such as its support, connectivity, and consitutive parameters. Determination of the latter property brings many interesting difficulties; in particular, an anisotropic medium may not be uniquely determined by the measured scattering data (cf. \cite{colwell}), leading to some anbiguity when using iterative methods to compute an approximation to the constitutive parameters. \par

However, in nondestructive evaluation it is not necessary to determine the constitutive parameters of a potentially damaged sample, as the only information sought is that a significant change has occurred in these parameters. This notion brings us to our current line of investigation, in which we seek to develop target signatures that carry information about a medium and whose observed shifts allow us to infer changes in the constitutive parameters of a sample relative to a reference configuration. A recent problem of interest is to study eigenvalues arising from scattering theory as potential target signatures, with the theory of transmission eigenvalues serving as an early example. We refer to \cite{cakoni_colton_haddar} for a comprehensive survey of this theory. In order to overcome some practical difficulties with the potential use of transmission eigenvalues in this manner (cf. \cite{cogar}), a collection of new eigenvalue problems has been generated by comparing the measured scattering data to that of an auxiliary scattering problem that is independent of the medium under investigation (cf. \cite{audibert_cakoni_haddar,audibert_chesnel_haddar2018,audibert_chesnel_haddar2019,bi_zhang_yang,cakoni_colton_meng_monk,cogar,cogar2019,cogar2020,cogar_colton_meng_monk,cogar_colton_monk,cogar_colton_monk2019,cogar_monk,liu_liu_sun}). In the present study we adopt this strategy in the context of electromagnetic scattering theory. \par

We concern ourselves with a modification of the first such eigenvalue problem studied (cf. \cite{camano_lackner_monk}), which was generated by choosing the auxiliary problem corresponding to electromagnetic scattering by an impenetrable obstacle with an impedance condition enforced on its boundary. The initial attempt at this problem was a direct generalization of Stekloff eigenvalues first considered for acoustic scattering in \cite{cakoni_colton_meng_monk}, but the more strict compactness requirements associated with the analysis of Maxwell's equations led the authors to consider a slightly modified problem in which a projection operator $\mathcal{S}_0$ was introduced into the boundary condition. They considered the eigenvalue problem
\begin{subequations} \label{stek}
\begin{align} 
\curl\curl\mathbf{w} - k^2\epsilon\mathbf{w} &= \mathbf{0} \text{ in } B, \label{stek1} \\
\un\times\curl\mathbf{w} - \lambda\mathcal{S} \mathbf{w}_T &= \mathbf{0} \text{ on } \partial B, \label{stek2}
\end{align}
\end{subequations}
where $\lambda$ is the eigenparameter and $\mathcal{S}$ may represent either the identity $I$ or the aforementioned projection operator $\mathcal{S}_0$. In both cases the values of $\lambda$ for which nontrivial solutions $\mathbf{w}$ exist were referred to as \emph{electromagnetic Stekloff eigenvalues}, but we will refer to the eigenvalues corresponding to $\mathcal{S} = \mathcal{S}_0$ as the \emph{standard electromagnetic Stekloff eigenvalues} due to the close relationship of this problem and the one we will introduce shortly. \par

It was shown in \cite{camano_lackner_monk} that the eigenvalues of \eqref{stek} with $\mathcal{S} = \mathcal{S}_0$ form an infinite discrete set without finite accumulation point when the coefficient $\epsilon$ is real-valued, and we also remark that this result was extended to both versions of \eqref{stek} in \cite{halla2,halla1}. However, in our intended application of nondestructive evaluation, many materials have a significant level of absorption that is represented by a generally complex-valued $\epsilon$, and consequently these results do not guarantee that eigenvalues will exist for a given sample to be used as potential target signatures. Thus, we first focus our attention on developing a slight modification of \eqref{stek} in which $\mathcal{S} = \mathcal{S}_\delta$ is a smoothing operator with a positive smoothing parameter $\delta$, which will allow us to use the theory of trace class operators to show that infinitely many eigenvalues of this new problem exist for an absorbing material whenever $\delta$ is sufficiently large. This approach was taken to achieve the same result for acoustic scattering in \cite{cogar2020}. Another result that has so far been unavailable is a careful analysis of the stability of eigenvalues under small perturbations of $\epsilon$, which we will provide for our new problem as well as for \eqref{stek} with $\mathcal{S} = \mathcal{S}_0$. In the context of nondestructive evaluation, this stability property may potentially allow for information about the perturbed medium to be obtained from the observed shift in the eigenvalues relative to a reference set. We remark that the questions of detectability of eigenvalues from measured scattering data and their sensitivity to changes in the medium are not considered here, and we refer to \cite{camano_lackner_monk} for such analysis of \eqref{stek}. \par

The outline of this paper is as follows. In Section \ref{sec_scprob} we recall the necessary Sobolev spaces for our study of Maxwell's equations and present the physical scattering problem that we will consider. We follow in Section \ref{sec_deltastek} with an overview of the Laplace-Beltrami operator on a surface, which we then use to define the smoothing operator $\mathcal{S}_\delta$. This section concludes with the introduction of the auxiliary problem and the resulting eigenvalue problem that we will study for the remainder of our discussion. We begin this investigation in Section \ref{sec_properties} by establishing some basic properties of the eigenvalues, after which we prove our first main result that infinitely many eigenvalues exist when the smoothing parameter $\delta$ is sufficiently large. In Section \ref{sec_pert} we prove the last two main results of this paper. First, we show that the eigenvalues are stable with respect to changes in the medium, including the case from \cite{camano_lackner_monk} when no smoothing is added. Second, we prove that the eigenvalues we consider converge to the eigenvalues of \eqref{stek} as the smoothing parameter $\delta$ converges to zero. Finally, we conclude in Section \ref{sec_conclusion} with some remarks concerning future work in this direction and its applicability to other types of eigenvalue problems arising in scattering theory.

\section{The physical scattering problem} \label{sec_scprob}

Before we introduce the physical scattering problem, we recall the definitions of some basic Sobolev spaces associated with Maxwell's equations. We follow the definitions found in \cite{camano_lackner_monk} for consistency. We let $\mathcal{O}\subset\mathbb{R}^3$ denote a bounded open simply connected domain with Lipschitz boundary $\partial\mathcal{O}$, and we denote the unit outward normal to $\mathcal{O}$ by $\un$. We write the norm for both spaces $L^2(\mathcal{O})$ and $\mathbf{L}^2(\mathcal{O}) := (L^2(\mathcal{O}))^2$ as $\norm{\cdot}_{\mathcal{O}}$, and we write $(\cdot,\cdot)_{\mathcal{O}}$ for their respective inner products. We define the space
\begin{align*}
\Hcurl{\mathcal{O}} &:= \{\mathbf{u}\in\mathbf{L}^2(\mathcal{O}) \mid \curl\mathbf{u}\in\mathbf{L}^2(\mathcal{O})\},
%\mathbf{X}(\mathcal{O}) &:= \{\mathbf{u}\in\Hcurl{\mathcal{O}} \mid \mathbf{u}_T\in\mathbf{L}_t^2(\mathcal{O})\},
\end{align*}
and we endow this space with the inner product defined by
\begin{equation*}
\inncurl{\mathbf{u}}{\mathbf{u}'}{\mathcal{O}} := (\curl\mathbf{u},\curl\mathbf{u}')_{\mathcal{O}} + (\mathbf{u},\mathbf{u}')_{\mathcal{O}} \quad\forall\mathbf{u},\mathbf{u}'\in\Hcurl{\mathcal{O}}
\end{equation*}
and the corresponding induced norm $\normcurl{\cdot}{\mathcal{O}}$. 
For use in exterior problems, we define the space
\begin{equation*}
\Hcurlloc{\mathbb{R}^3\setminus\overline{\mathcal{O}}} := \{\mathbf{u} \mid \mathbf{u}\in\Hcurl{B_R\setminus\overline{\mathcal{O}}} \quad\forall R>0\}.
\end{equation*}
We will primarily consider the case when the boundary $\partial\mathcal{O}$ is smooth, and we will require the spaces
\begin{align*}
\mathbf{L}_t^2(\partial\mathcal{O}) &:= \{\mathbf{u}\in\mathbf{L}^2(\partial\mathcal{O}) \mid \un\cdot\mathbf{u} = 0 \text{ a.e. on } \partial\mathcal{O}\}, \\
\mathbf{H}_t^s(\partial\mathcal{O}) &:= \{\mathbf{u}\in (H^s(\partial\mathcal{O}))^3 \mid \un\cdot\mathbf{u} = 0 \text{ a.e. on } \partial\mathcal{O}\}, \\
\mathbf{H}^s(\sdiv_{\partial\mathcal{O}},\partial\mathcal{O}) &:= \{\mathbf{u}\in\mathbf{H}_t^s(\partial\mathcal{O}) \mid \sdiv_{\partial\mathcal{O}} \mathbf{u} \in H^s(\partial\mathcal{O})\}, \\
\mathbf{H}^s(\sdiv_{\partial\mathcal{O}}^0,\partial\mathcal{O}) &:= \{\mathbf{u}\in\mathbf{H}^s(\sdiv_{\partial\mathcal{O}},\partial\mathcal{O}) \mid \sdiv_{\partial\mathcal{O}} \mathbf{u} = 0 \text{ on  } \partial\mathcal{O})\}, \\
\mathbf{H}^s(\scurl_{\partial\mathcal{O}}, \partial\mathcal{O}) &:= \{\mathbf{u}\in\mathbf{H}_t^s(\partial\mathcal{O}) \mid \scurl_{\partial\mathcal{O}}\mathbf{u}\in H^s(\partial\mathcal{O})\},
\end{align*}
where $\sdiv_{\partial\mathcal{O}}$ and $\scurl_{\partial\mathcal{O}}$ are the surface divergence and scalar surface curl, respectively, and $s\in\mathbb{R}$. We will also denote the vector surface curl by $\veccurl_{\partial\mathcal{O}}$ and the surface gradient by $\nabla_{\partial\mathcal{O}}$. We note that $\mathbf{H}_t^0(\partial\mathcal{O}) = \mathbf{L}_t^2(\partial\mathcal{O})$, and we define $\mathbf{H}(\sdiv_{\partial\mathcal{O}},\partial\mathcal{O}) := \mathbf{H}^0(\sdiv_{\partial\mathcal{O}},\partial\mathcal{O})$ for convenience. The space $\mathbf{H}_t^s(\partial\mathcal{O})$ is endowed with the standard norm $\norm{\cdot}_{\mathbf{H}_t^s(\partial\mathcal{O})}$, and the spaces $\mathbf{H}^s(\sdiv_{\partial\mathcal{O}},\partial\mathcal{O})$ and $\mathbf{H}^s(\scurl_{\partial\mathcal{O}}, \partial\mathcal{O})$ are endowed with the norms
\begin{align*}
\norm{\mathbf{u}}_{\mathbf{H}^s(\sdiv_{\partial\mathcal{O}},\partial\mathcal{O})}^2 &:= \norm{\mathbf{u}}_{s,\partial\mathcal{O}}^2 + \norm{\sdiv_{\partial\mathcal{O}}\mathbf{u}}_{s,\partial\mathcal{O}}^2, \\
\norm{\mathbf{u}}_{\mathbf{H}^s(\scurl_{\partial\mathcal{O}}, \partial\mathcal{O})}^2 &:= \norm{\mathbf{u}}_{s,\partial\mathcal{O}}^2 + \norm{\scurl_{\partial\mathcal{O}}\mathbf{u}}_{s,\partial\mathcal{O}}^2,
\end{align*}
respectively. We remark that the induced norm on the subspace $\mathbf{H}^s(\sdiv_{\partial\mathcal{O}}^0,\partial\mathcal{O})$ is simply the norm on $\mathbf{H}_t^s(\partial\mathcal{O})$. Finally, we shall briefly require the space of tangential vector fields on the unit sphere defined by
\begin{equation*}
\mathbf{L}_t^2(\mathbb{S}^2) := \{\mathbf{u}:\mathbb{S}^2\to\mathbb{R}^3 \mid \mathbf{u}(\mathbf{d})\cdot\mathbf{d} = 0, \, \mathbf{d}\in\mathbb{S}^2\},
\end{equation*}
where $\mathbb{S}^2 := \{\mathbf{d}\in\mathbb{R}^3 \mid \abs{\mathbf{d}} = 1\}$. For further information and definitions of the surface differential operators introduced above, we refer to \cite{monk}.

We now introduce the physical scattering problem of interest. We consider a function $\epsilon\in L^\infty(\mathbb{R}^3)$ representing the relative electric permittivity of the medium, and we assume that the contrast $1-\epsilon$ is supported in a bounded set $\overline{D}$, where $D$ is a Lipschitz domain with connected complement $\mathbb{R}^3\setminus\overline{D}$. We also assume that $\epsilon|_D$ lies in the space
\begin{equation*}
W_\Sigma^{1,\infty}(D) := \{\mu\in L^\infty(D) \mid \nabla(\mu|_{\Omega_j})\in\mathbf{L}^\infty(\Omega_j), j = 1,2,\dots,J\},
\end{equation*}
where $\{\Omega_j\}_{j=1}^J$ is a partition of $D$ with interface $\Sigma$, and that $\Re(\epsilon)\ge \epsilon_0 > 0$ and $\Im(\epsilon)\ge0$ a.e. in $D$. This regularity condition on $\epsilon$ ensures well-posedness of the subsequent scattering problem (cf. \cite{monk}), and we will also use it to obtain regularity results for Maxwell's equations in our investigation of stability of a certain solution operator with respect to the coefficient $\epsilon$. We consider scattering by this inhomogeneous medium of a time-harmonic incident field $\mathbf{E}^i$ that satisfies the free-space Maxwell's equations
\begin{equation*}
\curl\curl \mathbf{E}^i - k^2 \mathbf{E}^i = \mathbf{0} \text{ in } \mathbb{R}^3
\end{equation*}
for a fixed wave number $k>0$, and we seek a scattered field $\mathbf{E}^s\in\Hcurlloc{\mathbb{R}^3\setminus\overline{D}}$ and a total field $\mathbf{E}\in\Hcurl{D}$ which satisfy
\begin{subequations} \label{sc}
\begin{align}
\curl\curl \mathbf{E}^s - k^2 \mathbf{E}^s &= \mathbf{0} \text{ in } \mathbb{R}^3\setminus\overline{D}, \label{sc1} \\
\curl\curl \mathbf{E} - k^2\epsilon \mathbf{E} &= \mathbf{0} \text{ in } D, \label{sc2} \\
\un\times\mathbf{E} - \un\times\mathbf{E}^s &= \un\times \mathbf{E}^i \text{ on } \partial D, \label{sc3} \\
\un\times\curl \mathbf{E} - \un\times\curl \mathbf{E}^s &= \un\times\curl \mathbf{E}^i \text{ on } \partial D, \label{sc4} \\
\mathclap{\lim_{r\to\infty} \left(\curl \mathbf{E}^s\times\mathbf{x} - ikr\mathbf{E}^s\right) = 0.} \label{sc5}
\end{align}
\end{subequations}
We assume that the Silver-M{\"u}ller radiation condition \eqref{sc5} holds uniformly in all directions, and it follows that \eqref{sc} is well-posed (cf. \cite{colton_kress,monk}). \par

The scattered field $\mathbf{E}^s$ has the asymptotic form of an outgoing spherical wave with a certain amplitude, and for a plane wave incident field
\begin{equation*}
\mathbf{E}^i(\mathbf{x}) = \frac{i}{k} \curl\curl \mathbf{p} e^{-ik\mathbf{x}\cdot\mathbf{d}} = ik(\mathbf{d}\times\mathbf{p})\times\mathbf{d} e^{-ik\mathbf{x}\cdot\mathbf{d}}
\end{equation*}
with direction of propagation $\mathbf{d}\in\mathbb{S}^{2}$ and polarization vector $\mathbf{p}\in\mathbb{R}^3\setminus\{0\}$ such that $\mathbf{p}\perp\mathbf{d}$, we write this asymptotic formula as
\begin{equation} \label{eq:effp}
\mathbf{E}^s(\mathbf{x}) = \frac{e^{ik\abs{\mathbf{x}}}}{\abs{\mathbf{x}}} \left( \mathbf{E}_\infty(\hat{\mathbf{x}},\mathbf{d};\mathbf{p}) + O\left( \frac{1}{\abs{\mathbf{x}}^2} \right) \right) \text{ as } \abs{\mathbf{x}}\to\infty.
\end{equation}
The function $\mathbf{E}_\infty(\hat{\mathbf{x}},\mathbf{d};\mathbf{p})$ is called the \emph{electric far field pattern}, and we refer to $\hat{\mathbf{x}}$, $\mathbf{d}$, and $\mathbf{p}$ as the observation direction, incident direction, and polarization, respectively. When considering the inverse scattering problem, the measurements of this function at various observation and incident directions and polarizations provide the data used to conclude information about the medium under investigation. A central tool in this analysis is the \emph{electric far field operator} $\mathbf{F}:\mathbf{L}_t^2(\mathbb{S}^2) \to \mathbf{L}_t^2(\mathbb{S}^2)$ defined by
\begin{equation*}
(\mathbf{F}\mathbf{g})(\hat{\mathbf{x}}) := \int_{\mathbb{S}^2} \mathbf{E}_\infty(\hat{\mathbf{x}},\mathbf{d}; \mathbf{g}(\mathbf{d}))\, ds(\mathbf{d}), \; \hat{\mathbf{x}}\in\mathbb{S}^2.
\end{equation*}
Due to its dependence upon the electric far field pattern, the electric far field operator $\mathbf{F}$ may be considered as the collected data. As we mentioned in the introduction, our intended application is to use eigenvalues to detect changes in the electric permittivity $\epsilon$ of the medium, which is accomplished by comparing the measured scattering data represented by $\mathbf{F}$ with the computed scattering data for an auxiliary problem that we introduce in the next section.

\section{The auxiliary problem}

\label{sec_deltastek}

We let $B$ be a smooth domain in $\mathbb{R}^3$ with connected boundary $\partial B$ and connected complement $\mathbb{R}^3\setminus\overline{B}$, and we note that $\partial B$ is a smooth closed surface of dimension $2$ without boundary. Before we introduce the auxiliary problem that we will consider, we briefly recall the Laplace-Beltrami operator on $\partial B$, denoted by $\Delta_{\partial B}$, and its relationship to some of the Sobolev spaces that we introduced in Section \ref{sec_scprob}. The scalar Laplace-Beltrami operator is defined as
\begin{equation*}
\Delta_{\partial B} = -\sdiv_{\partial B} \nabla_{\partial B} = \scurl_{\partial B} \veccurl_{\partial B},
\end{equation*}
where we have introduced a negative sign (as in \cite{jost}) in order to ensure nonnegativity of the operator. We summarize the spectral properties of $\Delta_{\partial B}$ in the following theorem (cf. \cite{sayas_brown_hassell}).

\begin{theorem} \label{theorem_LB}

There exists an orthonormal basis $\{Y_m\}_{m=0}^\infty$ of $L^2(\partial B)$ and a nondecreasing divergent sequence of nonnegative real numbers $\{\mu_m\}_{m=0}^\infty$ such that
\begin{equation*}
\Delta_{\partial B} Y_m = \mu_m Y_m, \; m\ge0.
\end{equation*}
The first eigenvalue is $\mu_0 = 0$ with $Y_0 = \abs{\partial B}^{-1/2}$, and $\mu_m>0$ for $m\ge1$.

\end{theorem}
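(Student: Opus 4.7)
The plan is to apply the Hilbert--Schmidt spectral theorem to a compact self-adjoint inverse of $\Delta_{\partial B}$. The starting point is the symmetric bilinear form
\begin{equation*}
a(u,v) := (\nabla_{\partial B} u, \nabla_{\partial B} v)_{\partial B}, \qquad u,v \in H^1(\partial B),
\end{equation*}
which is the Dirichlet form associated with $\Delta_{\partial B}$: for sufficiently smooth $u,v$, the identity $\Delta_{\partial B} = -\sdiv_{\partial B} \nabla_{\partial B}$ combined with surface integration by parts, with no boundary contributions since $\partial B$ is closed, gives $(\Delta_{\partial B} u, v)_{\partial B} = a(u,v)$. The form $a$ is continuous, symmetric, and nonnegative on $H^1(\partial B)$.

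First, I would construct the resolvent. The shifted form $b(u,v) := a(u,v) + (u,v)_{\partial B}$ is continuous and coercive on $H^1(\partial B)$, so the Lax--Milgram theorem produces, for each $f \in L^2(\partial B)$, a unique $u = Tf \in H^1(\partial B)$ solving $b(u,v) = (f,v)_{\partial B}$ for all $v \in H^1(\partial B)$. The operator $T : L^2(\partial B) \to L^2(\partial B)$ is bounded, self-adjoint by symmetry of $b$, and positive by coercivity. The Rellich--Kondrachov compact embedding $H^1(\partial B) \hookrightarrow L^2(\partial B)$, which holds because $\partial B$ is a compact smooth manifold without boundary, then upgrades $T$ to a compact operator.

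Next, I would extract the eigenbasis. The spectral theorem for compact self-adjoint positive operators yields an orthonormal basis $\{Y_m\}_{m=0}^\infty$ of $L^2(\partial B)$ and a nonincreasing null sequence of positive eigenvalues $\tau_0 \ge \tau_1 \ge \cdots$ with $TY_m = \tau_m Y_m$. Setting $\mu_m := \tau_m^{-1} - 1$ yields a nondecreasing divergent sequence of real numbers, and unwinding the definition of $T$ shows that $Y_m \in H^1(\partial B)$ satisfies $a(Y_m, v) = \mu_m (Y_m, v)_{\partial B}$ for every $v \in H^1(\partial B)$, which is exactly the weak form of $\Delta_{\partial B} Y_m = \mu_m Y_m$. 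Testing with $v = Y_m$ gives $\mu_m = a(Y_m, Y_m) \ge 0$.

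Finally, identifying $\mu_0$ reduces to characterizing the kernel of $a$: if $a(u,u) = 0$ then $\nabla_{\partial B} u \equiv 0$, so $u$ is locally constant, and connectedness of $\partial B$ forces $u$ to be globally constant. The constants form a one-dimensional subspace of $L^2(\partial B)$ whose unique $L^2$-normalized element is $\abs{\partial B}^{-1/2}$, so $\mu_0 = 0$ is simple with eigenfunction $Y_0 = \abs{\partial B}^{-1/2}$, and $\mu_m > 0$ for $m \ge 1$. The main technical ingredients --- the compact Sobolev embedding on a closed surface and the absence of boundary terms in the surface integration-by-parts identity --- are standard consequences of the theory of Sobolev spaces on smooth compact manifolds without boundary, so no real obstacle is expected.
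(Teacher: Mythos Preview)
Your argument is correct and is the standard route to the spectral theorem for the Laplace--Beltrami operator on a closed surface: build the compact self-adjoint resolvent $(\Delta_{\partial B}+I)^{-1}$ via Lax--Milgram and the Rellich embedding, diagonalize it, and read off the eigenpairs of $\Delta_{\partial B}$; the identification of the kernel with the constants via connectedness of $\partial B$ is also right.

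The paper, however, does not prove this theorem at all --- it is stated as a summary of known spectral properties with a citation (to \cite{sayas_brown_hassell}) and no argument is given. So there is nothing to compare your approach against; you have simply supplied the textbook proof that the paper chose to omit.
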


%If we expand $\xi\in L^2(\partial B)$ in this eigenbasis in the form
%\begin{equation*}
%\xi = \sum_{m=0}^\infty \xi_m Y_m,
%\end{equation*}
%then we may characterize the Sobolev space $H^s(\partial B)$, $s\ge0$, as
%\begin{equation*}
%H^s(\partial B) = \left\{u\in L^2(\partial B) \,\middle|\, \sum_{m=0}^\infty (1+\mu_m)^s \abs{\xi_m}^2 < \infty\right\}.
%\end{equation*}
This eigenbasis was used in \cite{cogar2020} to modify the scalar Stekloff eigenvalue problem, and for the present discussion we require the \emph{vector Laplace-Beltrami operator} $\boldsymbol{\Delta}_{\partial B}$ defined as
\begin{equation*}
\boldsymbol{\Delta}_{\partial B} := -\nabla_{\partial B} \sdiv_{\partial B} + \veccurl_{\partial B} \scurl_{\partial B}.
\end{equation*}
We note that we have again included the negative sign in this definition in order to ensure that $\boldsymbol{\Delta}_{\partial B}$ is nonnegative-definite. From the definition of the eigenfunctions $\{Y_m\}$ of the scalar Laplace-Beltrami operator we see that
\begin{align*}
\boldsymbol{\Delta}_{\partial B} \veccurl_{\partial B} Y_m &= \mu_m \veccurl_{\partial B} Y_m, \\
\boldsymbol{\Delta}_{\partial B} \nabla_{\partial B} Y_m &= \mu_m \nabla_{\partial B} Y_m,
\end{align*}
and it follows that for $m\ge1$ the surface vector fields $\veccurl_{\partial B} Y_m$ and $\nabla_{\partial B} Y_m$ are eigenfunctions of $\boldsymbol{\Delta}_{\partial B}$ corresponding to the eigenvalue $\mu_m$. Since the surface $\partial B$ is simply connected, these eigenfunctions (appropriately normalized) constitute an orthonormal basis in $\mathbf{L}_t^2(\partial B)$ (cf. \cite{nedelec}). As a consequence, any tangential vector field $\boldsymbol{\xi}$ defined on $\partial B$ may be expanded in this basis in the form
\begin{equation} \label{eq:vector_expand}
\boldsymbol{\xi} = \sum_{m=1}^\infty \left[ \boldsymbol{\xi}_m^{(1)} \nabla_{\partial B} Y_m + \boldsymbol{\xi}_m^{(2)} \veccurl_{\partial B} Y_m \right].
\end{equation}
It follows that for any $s\in\mathbb{R}$ the space $\mathbf{H}_t^s(\partial B)$ has the spectral characterization
\begin{equation*}
\mathbf{H}_t^s(\partial B) = \left\{\boldsymbol{\xi} \;\middle|\; \sum_{m=1}^\infty \mu_m^{s+1} \left( \abs{\boldsymbol{\xi}_m^{(1)}}^2 + \abs{\boldsymbol{\xi}_m^{(2)}}^2 \right) < \infty \right\},
\end{equation*}
and we may replace the standard norm on this space with the equivalent norm given by
\begin{equation*}
\norm{\boldsymbol{\xi}}_{\mathbf{H}_t^s(\partial B)} := \left[ \sum_{m=1}^\infty \mu_m^{s+1} \left( \abs{\boldsymbol{\xi}_m^{(1)}}^2 + \abs{\boldsymbol{\xi}_m^{(2)}}^2 \right) \right]^{1/2}.
\end{equation*}
We work with the same characterization and equivalent norm of $\mathbf{H}^s(\sdiv_{\partial B}^0,\partial B)$. The spaces $\mathbf{H}^{-1/2}(\sdiv_{\partial B},\partial B)$ and $\mathbf{H}^{-1/2}(\scurl_{\partial B},\partial B)$ are the images of the tangential trace operators $\mathbf{u}\mapsto\un\times\mathbf{u}$ and $\mathbf{u}\mapsto\mathbf{u}_T := (\un\times\mathbf{u})\times\un$ on $\Hcurl{B}$ (cf. \cite{monk}), respectively, and they may be characterized as
\begin{align*}
\mathbf{H}^{-1/2}(\sdiv_{\partial B},\partial B) &= \left\{ \boldsymbol{\xi} \;\middle|\; \sum_{m=1}^\infty \mu_m^{1/2} \left( \mu_m\abs{\boldsymbol{\xi}_m^{(1)}}^2 + \abs{\boldsymbol{\xi}_m^{(2)}}^2 \right) < \infty \right\}, \\
\mathbf{H}^{-1/2}(\scurl_{\partial B},\partial B) &= \left\{ \boldsymbol{\xi} \;\middle|\; \sum_{m=1}^\infty \mu_m^{1/2} \left( \abs{\boldsymbol{\xi}_m^{(1)}}^2 + \mu_m \abs{\boldsymbol{\xi}_m^{(2)}}^2 \right) < \infty \right\}.
\end{align*}
We again replace the standard norms given in Section \ref{sec_scprob} with the equivalent norms
\begin{align*}
\norm{\boldsymbol{\xi}}_{\mathbf{H}^{-1/2}(\sdiv_{\partial B},\partial B)} &:= \left[ \sum_{m=1}^\infty \mu_m^{1/2} \left( \mu_m\abs{\boldsymbol{\xi}_m^{(1)}}^2 + \abs{\boldsymbol{\xi}_m^{(2)}}^2 \right) \right]^{1/2}, \\
\norm{\boldsymbol{\xi}}_{\mathbf{H}^{-1/2}(\scurl_{\partial B},\partial B)} &:= \left[ \sum_{m=1}^\infty \mu_m^{1/2} \left( \abs{\boldsymbol{\xi}_m^{(1)}}^2 + \mu_m \abs{\boldsymbol{\xi}_m^{(2)}}^2 \right) \right]^{1/2}.
\end{align*}

We now proceed to define the operator $\mathcal{S}_0$ as in \cite{camano_lackner_monk}, after which we will provide an equivalent form of the operator in terms of the eigenbasis of $\boldsymbol{\Delta}_{\partial B}$. We define $\mathcal{S}_0: \mathbf{H}^{-1/2}(\scurl_{\partial B},\partial B) \to \mathbf{H}^{1/2}(\sdiv_{\partial B}^0,\partial B)$ by $\mathcal{S}_0 \boldsymbol{\xi} := \veccurl_{\partial B} q$, where $q\in H^1(\partial B)/\mathbb{C}$ is the unique solution of
\begin{equation*}
\Delta_{\partial B} q = \scurl_{\partial B} \boldsymbol{\xi}.
\end{equation*}

In the following proposition we summarize the basic properties of this operator that were established in \cite{camano_lackner_monk}.

\begin{proposition} \label{prop:S0}

The operator $\mathcal{S}_0: \mathbf{H}^{-1/2}(\scurl_{\partial B},\partial B) \to \mathbf{H}^{1/2}(\sdiv_{\partial B}^0,\partial B)$ is bounded and satisfies
\begin{equation*}
\int_{\partial B} \mathcal{S}_0 \mathbf{u}_T\cdot\overline{\mathbf{z}_T} \,ds = \int_{\partial B} \mathcal{S}_0 \mathbf{u}_T\cdot\overline{\mathcal{S}_0\mathbf{z}_T} \,ds = \int_{\partial B} \mathbf{u}_T\cdot\overline{\mathcal{S}_0\mathbf{z}_T} \,ds
\end{equation*}
for all $\mathbf{u},\mathbf{z}\in\mathbf{H}(\textnormal{\textbf{curl}},B)$, where the integrals over $\partial B$ represent duality pairs between $\mathbf{H}^{-1/2}(\sdiv_{\partial B},\partial B)$ and $\mathbf{H}^{-1/2}(\scurl_{\partial B},\partial B)$.

\end{proposition}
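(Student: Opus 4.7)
The plan is to obtain an explicit spectral representation of $\mathcal{S}_0$ on the vector Laplace--Beltrami basis $\{\nabla_{\partial B} Y_m,\, \veccurl_{\partial B} Y_m\}_{m\ge 1}$, from which both the continuity statement and the three identities follow essentially by inspection.

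First I would expand a generic $\boldsymbol{\xi}\in\mathbf{H}^{-1/2}(\scurl_{\partial B},\partial B)$ as in \eqref{eq:vector_expand} and use $\scurl_{\partial B}\nabla_{\partial B}=0$ together with $\scurl_{\partial B}\veccurl_{\partial B} Y_m = \Delta_{\partial B} Y_m = \mu_m Y_m$ to obtain $\scurl_{\partial B}\boldsymbol{\xi} = \sum_{m\ge 1}\mu_m\boldsymbol{\xi}_m^{(2)} Y_m$. Matching coefficients in $\Delta_{\partial B}q = \scurl_{\partial B}\boldsymbol{\xi}$ against the scalar basis of Theorem \ref{theorem_LB} gives $q = \sum_{m\ge 1}\boldsymbol{\xi}_m^{(2)} Y_m$ in $H^1(\partial B)/\mathbb{C}$, and therefore
\[
\mathcal{S}_0\boldsymbol{\xi} \;=\; \veccurl_{\partial B} q \;=\; \sum_{m\ge 1}\boldsymbol{\xi}_m^{(2)}\veccurl_{\partial B} Y_m.
\]
In other words, $\mathcal{S}_0$ acts as the $L^2$-orthogonal projection onto the $\veccurl_{\partial B}$-component of the Helmholtz decomposition of a tangential field on $\partial B$.

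Continuity is then immediate from the equivalent norms recorded just after \eqref{eq:vector_expand}: the weight attached to $\boldsymbol{\xi}_m^{(2)}$ is $\mu_m^{3/2}$ both in the $\mathbf{H}^{-1/2}(\scurl_{\partial B},\partial B)$ norm and in the $\mathbf{H}_t^{1/2}(\partial B)$ norm inherited by $\mathbf{H}^{1/2}(\sdiv_{\partial B}^0,\partial B)$, while the $\boldsymbol{\xi}_m^{(1)}$ component is annihilated. Hence $\norm{\mathcal{S}_0\boldsymbol{\xi}}\le\norm{\boldsymbol{\xi}}$ term by term, and the mapping property is proved.

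For the three identities I would substitute the expansions \eqref{eq:vector_expand} of $\mathbf{u}_T$ and $\mathbf{z}_T$, with coefficients $u_m^{(i)}$ and $z_m^{(i)}$, and invoke the surface orthogonality relations
\[
\int_{\partial B}\nabla_{\partial B} Y_m\cdot\veccurl_{\partial B} Y_n\,ds = 0,\qquad \int_{\partial B}\veccurl_{\partial B} Y_m\cdot\veccurl_{\partial B} Y_n\,ds = \mu_m\delta_{mn},
\]
which follow from integration by parts on the boundaryless surface $\partial B$ together with $\sdiv_{\partial B}\veccurl_{\partial B}=0$ and $\scurl_{\partial B}\veccurl_{\partial B} Y_m=\mu_m Y_m$. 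Each of the three quantities in the proposition collapses to the same diagonal sum $\sum_{m\ge 1}\mu_m\, u_m^{(2)}\overline{z_m^{(2)}}$, yielding the claimed equalities. The only point requiring care --- not a genuine obstacle --- is that these ``integrals'' must be read as duality pairings between $\mathbf{H}^{-1/2}(\sdiv_{\partial B},\partial B)$ and $\mathbf{H}^{-1/2}(\scurl_{\partial B},\partial B)$, since $\mathbf{u}_T,\mathbf{z}_T$ possess only $\mathbf{H}^{-1/2}(\scurl_{\partial B},\partial B)$ regularity. Because $\mathcal{S}_0\mathbf{u}_T\in\mathbf{H}^{1/2}(\sdiv_{\partial B}^0,\partial B)\subset\mathbf{H}^{-1/2}(\sdiv_{\partial B},\partial B)$ by the first step, each pairing is well defined, and a routine weighted Cauchy--Schwarz estimate confirms absolute convergence of the diagonal series.
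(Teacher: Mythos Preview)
The paper does not give its own proof of this proposition; it simply cites \cite{camano_lackner_monk} (``In the following proposition we summarize the basic properties of this operator that were established in \cite{camano_lackner_monk}''). Your spectral-expansion argument is correct and self-contained, and it in fact anticipates exactly the explicit formula $\mathcal{S}_0\boldsymbol{\xi} = \sum_{m\ge 1}\boldsymbol{\xi}_m^{(2)}\veccurl_{\partial B} Y_m$ that the paper records immediately after the proposition; both the boundedness (via matching $\mu_m^{3/2}$ weights) and the three identities (via the orthogonality relations you state) follow cleanly as you describe.
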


Through an eigensystem expansion of $q$ in the definition of $\mathcal{S}_0$, we see that the operator $\mathcal{S}_0$ may be equivalently expressed as
\begin{equation*}
\mathcal{S}_0\boldsymbol{\xi} := \sum_{m=1}^\infty \boldsymbol{\xi}_m^{(2)} \veccurl_{\partial B} Y_m,
\end{equation*}
where $\boldsymbol{\xi}$ has the expansion \eqref{eq:vector_expand}. This form of $\mathcal{S}_0$ motivates the subsequent definition of the smoothing operator $\mathcal{S}_\delta$, which is analogous to the operator introduced in \cite{cogar2020} in the context of the Helmholtz equation. \par

For a given $\delta\ge0$ we define the operator $\mathcal{S}_\delta:\mathbf{H}^{-1/2}(\scurl_{\partial B}, \partial B)\to\mathbf{H}^{1/2}(\div_{\partial B}^0,\partial B)$ as
\begin{equation*}
\mathcal{S}_\delta\boldsymbol{\xi} := \sum_{m=1}^\infty \mu_m^{-\delta} \boldsymbol{\xi}_m^{(2)} \veccurl_{\partial B} Y_m,
\end{equation*}
and we note that for $\delta = 0$ this operator coincides with the operator $\mathcal{S}_0$ defined above. We first observe that $\mathcal{S}_{\delta_1}\mathcal{S}_{\delta_2} = \mathcal{S}_{\delta_1+\delta_2}$ for all $\delta_1,\delta_2\ge0$. We summarize some basic facts of the operator $\mathcal{S}_\delta$ in the following proposition, which are an immediate consequence of the definition of $\mathcal{S}_\delta$ and the spectral characterizations of the related Sobolev spaces provided above.

\begin{proposition} \label{prop:opSdelta}

For any $\rho\ge-\frac{1}{2}$, the operator $\mathcal{S}_\delta$ is bounded from $\mathbf{H}^\rho(\scurl_{\partial B}, \partial B)$ into $\mathbf{H}^{1+\rho+2\delta}(\sdiv_{\partial B}^0,\partial B)$. In particular, the operator $\mathcal{S}_\delta:\mathbf{H}^{-1/2}(\scurl_{\partial B}, \partial B)\to\mathbf{H}^{1/2}(\sdiv_{\partial B}^0,\partial B)$ is compact whenever $\delta>0$. Furthermore, the operator $\mathcal{S}_\delta$ satisfies
\begin{equation} \label{eq:Sdelta_positive}
\int_{\partial B} \mathcal{S}_\delta \mathbf{u}_T\cdot\overline{\mathbf{z}_T} \,ds = \int_{\partial B} \mathcal{S}_{\delta/2} \mathbf{u}_T\cdot\overline{\mathcal{S}_{\delta/2} \mathbf{z}_T} \,ds = \int_{\partial B} \mathbf{u}_T\cdot\overline{\mathcal{S}_\delta\mathbf{z}_T} \,ds
\end{equation}
for all $\mathbf{u},\mathbf{z}\in\mathbf{H}(\textnormal{\textbf{curl}},B)$.

\end{proposition}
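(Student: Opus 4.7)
The plan is to reduce every assertion to a direct calculation in the orthogonal system $\{\nabla_{\partial B} Y_m, \veccurl_{\partial B} Y_m\}_{m \ge 1}$ underlying the expansion \eqref{eq:vector_expand}, using the spectral characterizations of $\mathbf{H}_t^s(\partial B)$, $\mathbf{H}^{-1/2}(\sdiv_{\partial B}, \partial B)$, and $\mathbf{H}^{-1/2}(\scurl_{\partial B}, \partial B)$ just recorded. The central observation is that $\mathcal{S}_\delta$ annihilates the gradient component of $\boldsymbol{\xi}$ and merely rescales the curl component by $\mu_m^{-\delta}$, so each claim becomes a manipulation of a weighted $\ell^2$-sum in the index $m$.

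For the boundedness assertion, I would first note that the spectral characterization of $\mathbf{H}^{-1/2}(\scurl_{\partial B}, \partial B)$ extends verbatim to any $\rho \ge -\tfrac{1}{2}$, giving an equivalent norm
\[
\|\boldsymbol{\xi}\|_{\mathbf{H}^\rho(\scurl_{\partial B}, \partial B)}^2 \asymp \sum_{m=1}^\infty \mu_m^{\rho+1}|\boldsymbol{\xi}_m^{(1)}|^2 + \sum_{m=1}^\infty \mu_m^{\rho+2}|\boldsymbol{\xi}_m^{(2)}|^2.
\]
Since $\mathcal{S}_\delta \boldsymbol{\xi}$ has only a curl component and therefore vanishing surface divergence, its norm in $\mathbf{H}^{1+\rho+2\delta}(\sdiv_{\partial B}^0, \partial B)$ coincides with its $\mathbf{H}_t^{1+\rho+2\delta}$-norm, which the definition gives as $\sum_m \mu_m^{\rho+2}|\boldsymbol{\xi}_m^{(2)}|^2$; this is dominated by the right-hand side above, establishing boundedness. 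Compactness at $\rho = -\tfrac{1}{2}$ with $\delta > 0$ then follows by composing the bounded map $\mathcal{S}_\delta: \mathbf{H}^{-1/2}(\scurl_{\partial B}, \partial B) \to \mathbf{H}^{1/2+2\delta}(\sdiv_{\partial B}^0, \partial B)$ with the compact embedding into $\mathbf{H}^{1/2}(\sdiv_{\partial B}^0, \partial B)$; equivalently, the truncations $\mathcal{S}_\delta^N \boldsymbol{\xi} := \sum_{m \le N} \mu_m^{-\delta}\boldsymbol{\xi}_m^{(2)} \veccurl_{\partial B} Y_m$ are finite-rank and converge to $\mathcal{S}_\delta$ in operator norm because $\mu_m^{-2\delta} \to 0$.

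For the identity \eqref{eq:Sdelta_positive}, I would expand $\mathbf{u}_T$ and $\mathbf{z}_T$ in the basis and use the orthogonalities $\int_{\partial B} \nabla_{\partial B} Y_m \cdot \overline{\veccurl_{\partial B} Y_n}\,ds = 0$ (which comes from $\sdiv_{\partial B}\veccurl_{\partial B} = 0$) together with $\int_{\partial B} |\veccurl_{\partial B} Y_m|^2\,ds = \mu_m$ to reduce each of the three expressions to the single series $\sum_{m=1}^\infty \mu_m^{1-\delta}\, \boldsymbol{u}_m^{(2)}\overline{\boldsymbol{z}_m^{(2)}}$, so the triple equality is immediate from $\mu_m^{-\delta} = \mu_m^{-\delta/2}\mu_m^{-\delta/2}$. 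The one genuine subtlety is that for $\mathbf{u}, \mathbf{z} \in \Hcurl{B}$ the tangential traces $\mathbf{u}_T, \mathbf{z}_T$ merely lie in $\mathbf{H}^{-1/2}(\scurl_{\partial B}, \partial B)$, so each surface integral must be interpreted as the duality pairing between $\mathbf{H}^{-1/2}(\sdiv_{\partial B}, \partial B)$ and $\mathbf{H}^{-1/2}(\scurl_{\partial B}, \partial B)$; the boundedness already established guarantees that $\mathcal{S}_\delta \mathbf{u}_T$ and $\mathcal{S}_{\delta/2}\mathbf{u}_T$ carry enough regularity for these pairings to be well defined, and this duality bookkeeping is the main (mild) obstacle before the spectral computation goes through term by term.
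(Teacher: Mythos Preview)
Your proposal is correct and follows exactly the approach the paper has in mind: the paper states that the proposition is ``an immediate consequence of the definition of $\mathcal{S}_\delta$ and the spectral characterizations of the related Sobolev spaces provided above'' without further detail, and your argument simply spells out those spectral computations term by term. The one point you flag as a subtlety---interpreting the surface integrals as duality pairings---is handled in the paper exactly as you suggest (cf.\ the remark following Proposition~\ref{prop:S0}).
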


For later use we provide the following result concerning the summability of the sequence $\{\mu_m^{-\beta}\}$ for a given $\beta>0$, which follows as a straightforward consequence of Weyl's law (cf. \cite{jost}). We note that this result is valid only for dimension $d=3$.

\begin{proposition} \label{prop:Weyl}

The sequence $\{\mu_m^{-\beta}\}$ is summable if and only if $\beta > 1$.

\end{proposition}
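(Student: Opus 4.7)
The plan is to invoke Weyl's asymptotic law for the Laplace-Beltrami operator on a compact Riemannian manifold and then reduce the question to convergence of a $p$-series by a standard comparison argument.

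First, I would note that since $B\subset\mathbb{R}^3$ is a bounded domain with smooth boundary, the surface $\partial B$ is a compact smooth Riemannian $2$-manifold without boundary (this is where the remark that the statement is specific to ambient dimension $d=3$ enters, since it fixes the intrinsic dimension of $\partial B$ at $2$). Weyl's law, as stated for instance in \cite{jost}, then yields the asymptotic
\begin{equation*}
\mu_m \sim C\, m^{2/2} = C\,m \quad \text{as } m\to\infty,
\end{equation*}
for some constant $C=C(\partial B)>0$ depending only on the area of $\partial B$. In particular there exist constants $0<c_1\le c_2<\infty$ and an index $m_0\ge 1$ such that $c_1 m \le \mu_m \le c_2 m$ for all $m\ge m_0$.

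Given this two-sided bound, the convergence of $\sum_{m=1}^\infty \mu_m^{-\beta}$ is equivalent to that of $\sum_{m=1}^\infty m^{-\beta}$. By the limit comparison test (or direct sandwiching), the latter is a standard $p$-series which converges if and only if $\beta>1$, and the tail terms $m<m_0$ contribute only a finite sum that does not affect summability. This establishes both implications of the equivalence.

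The only real content is the Weyl asymptotic itself; once that is invoked, the argument reduces to the well-known convergence criterion for $\sum m^{-\beta}$, so I anticipate no genuine obstacle. If one wished to avoid citing Weyl directly, one could instead argue by comparison with the flat torus or sphere via min-max characterizations of eigenvalues, but in view of the available reference \cite{jost} this is unnecessary.
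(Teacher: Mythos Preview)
Your proposal is correct and matches the paper's own treatment: the paper does not give a detailed proof but simply remarks that the result is a straightforward consequence of Weyl's law (citing \cite{jost}) and notes the dimensional restriction $d=3$. Your write-up spells out exactly this, correctly identifying that $\partial B$ is a $2$-dimensional compact manifold so that $\mu_m\sim Cm$, and then reducing to the $p$-series criterion.
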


With some basic results in hand, we now define the auxiliary problem that we will use to generate a modification of the electromagnetic Stekloff eigenvalue problem. We assume that $B$ is chosen such that $D\subseteq B$, and we introduce the auxiliary problem of finding
$\mathbf{E}_\lambda^s\in \Hcurlloc{\mathbb{R}^3\setminus\overline{B}}$ satisfying
\begin{subequations} \label{deltaprob}
\begin{align}
\curl\curl \mathbf{E}_\lambda^s - k^2 \mathbf{E}_\lambda^s &= \mathbf{0} \text{ in } \mathbb{R}^3\setminus\overline{B}, \label{deltaprob1} \\
\un\times\curl \mathbf{E}_\lambda^s - \lambda \mathcal{S}_\delta \mathbf{E}_{\lambda,T}^s &= - \un\times\curl \mathbf{E}^i + \lambda \mathcal{S}_\delta \mathbf{E}_T^i \text{ on } \partial B, \label{deltaprob2} \\
\mathclap{\lim_{r\to\infty} \left(\curl \mathbf{E}_\lambda^s\times\mathbf{x} - ikr\mathbf{E}_\lambda^s\right) = 0,} \label{deltaprob3}
\end{align}
\end{subequations}
where the parameter $\lambda\in\mathbb{C}$ satisfies $\Im(\lambda)\ge0$ and will serve as our eigenparameter. If we choose $\delta = 0$, then \eqref{deltaprob} reduces to the standard problem \eqref{stek} with the projection operator $\mathcal{S} = \mathcal{S}_0$. In the case $\delta>0$, the fact that $\mathcal{S}_\delta$ is a bounded operator satisfying \eqref{eq:Sdelta_positive} implies that \eqref{deltaprob} is well-posed whenever $\Im(\lambda)\ge0$ (cf. \cite{camano_lackner_monk}). \par

As for the physical scattering problem, the auxiliary scattered field has an asymptotic expansion of the form \eqref{eq:effp}, and we denote the auxiliary far field pattern by $\mathbf{E}_{\lambda,\infty}^{(\delta)}(\hat{\mathbf{x}},\mathbf{d};\mathbf{p})$. In a similar manner, we define the \emph{auxiliary far field operator} $\mathbf{F}_\lambda^{(\delta)}:\mathbf{L}_t^2(\mathbb{S}^2) \to \mathbf{L}_t^2(\mathbb{S}^2)$ as
\begin{equation*}
(\mathbf{F}_\lambda^{(\delta)}\mathbf{g})(\hat{\mathbf{x}}) := \int_{\mathbb{S}^2} \mathbf{E}_{\lambda,\infty}^{(\delta)}(\hat{\mathbf{x}},\mathbf{d}; \mathbf{g}(\mathbf{d}))\, ds(\mathbf{d}), \; \hat{\mathbf{x}}\in\mathbb{S}^2.
\end{equation*}
We remark that we have explicitly denoted the dependence of the auxiliary far field operator on $\delta$. We now define the \emph{modified far field operator} $\boldsymbol{\mathcal{F}}_\lambda^{(\delta)} := \mathbf{F} - \mathbf{F}_\lambda^{(\delta)}$, which may be written explicitly as
\begin{equation*}
(\boldsymbol{\mathcal{F}}_\lambda^{(\delta)}\mathbf{g})(\hat{\mathbf{x}}) := \int_{\mathbb{S}^2} \left[ \mathbf{E}_\infty(\hat{\mathbf{x}},\mathbf{d}; \mathbf{g}(\mathbf{d})) - \mathbf{E}_{\lambda,\infty}^{(\delta)}(\hat{\mathbf{x}},\mathbf{d}; \mathbf{g}(\mathbf{d})) \right]\, ds(\mathbf{d}), \; \hat{\mathbf{x}}\in\mathbb{S}^2.
\end{equation*}

The modified far field operator $\boldsymbol{\mathcal{F}}_\lambda^{(\delta)}$ serves to compare the measured scattering data to the computed auxiliary data for a given value of the parameter $\lambda$, represented by $\mathbf{F}$ and $\mathbf{F}_\lambda^{(\delta)}$, respectively. By following the same reasoning as in the case $\delta=0$ (cf. \cite{camano_lackner_monk}) we have the following result.

\begin{theorem} \label{theorem:modF}

The modified far field operator $\boldsymbol{\mathcal{F}}_\lambda^{(\delta)}$ is injective provided there exists no nontrivial solution $\mathbf{w}\in\Hcurl{B}$ of the \emph{electromagnetic $\delta$-Stekloff problem}
\begin{subequations} \label{deltaeig}
\begin{align}
\curl\curl \mathbf{w} - k^2 \epsilon \mathbf{w} &= \mathbf{0} \text{ in } B, \label{deltaeig1} \\
\un\times\curl \mathbf{w} - \lambda \mathcal{S}_\delta \mathbf{w}_T &= \mathbf{0} \text{ on } \partial B. \label{deltaeig2}
\end{align}
\end{subequations}

\end{theorem}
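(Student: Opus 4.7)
The plan is to follow the standard argument for modified/auxiliary far field operators (as used in \cite{camano_lackner_monk} for the case $\delta=0$), with the key ingredient being that the $\delta$-Stekloff boundary condition on $\partial B$ appears naturally when one equates the physical and auxiliary scattered fields outside $B$.

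First I would fix $\mathbf{g}\in\mathbf{L}_t^2(\mathbb{S}^2)$ with $\boldsymbol{\mathcal{F}}_\lambda^{(\delta)}\mathbf{g} = \mathbf{0}$ and define the electromagnetic Herglotz incident field
\begin{equation*}
\mathbf{E}^i(\mathbf{x}) := ik \int_{\mathbb{S}^2} \mathbf{g}(\mathbf{d}) e^{-ik\mathbf{x}\cdot\mathbf{d}}\, ds(\mathbf{d}),
\end{equation*}
which is an entire solution of $\curl\curl\mathbf{E}^i - k^2\mathbf{E}^i = \mathbf{0}$. By linearity, the physical and auxiliary far field patterns generated by this $\mathbf{E}^i$ agree, so by the Rellich-type uniqueness result for the Silver-M\"uller radiation condition (cf.\ \cite{colton_kress,monk}) the corresponding radiating fields satisfy $\mathbf{E}^s = \mathbf{E}_\lambda^s$ in $\mathbb{R}^3\setminus\overline{B}$. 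In particular, the Cauchy data of $\mathbf{E}^s$ and $\mathbf{E}_\lambda^s$ coincide on $\partial B$.

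Next I would define the candidate eigenfunction in $B$ by
\begin{equation*}
\mathbf{w} := \begin{cases} \mathbf{E}, & \text{in } D, \\ \mathbf{E}^s + \mathbf{E}^i, & \text{in } B\setminus\overline{D}, \end{cases}
\end{equation*}
i.e., the total physical field restricted to $B$. The transmission conditions \eqref{sc3}--\eqref{sc4} guarantee that $\mathbf{w}\in\Hcurl{B}$, and since $\epsilon\equiv 1$ in $B\setminus\overline{D}$ and both $\mathbf{E}^s,\mathbf{E}^i$ satisfy the free-space Maxwell equations there while $\mathbf{E}$ satisfies \eqref{sc2} in $D$, we get $\curl\curl\mathbf{w} - k^2\epsilon\mathbf{w} = \mathbf{0}$ in $B$. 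To obtain the boundary condition, I would use $\mathbf{E}^s = \mathbf{E}_\lambda^s$ on $\partial B$ together with the auxiliary boundary condition \eqref{deltaprob2} to compute
\begin{equation*}
\un\times\curl\mathbf{w} = \un\times\curl(\mathbf{E}_\lambda^s + \mathbf{E}^i) = \lambda\mathcal{S}_\delta(\mathbf{E}_{\lambda,T}^s + \mathbf{E}_T^i) = \lambda\mathcal{S}_\delta\mathbf{w}_T \quad \text{on } \partial B,
\end{equation*}
so $\mathbf{w}$ solves the $\delta$-Stekloff problem \eqref{deltaeig}. The hypothesis then forces $\mathbf{w}\equiv\mathbf{0}$ in $B$.

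Finally, from $\mathbf{w}=\mathbf{0}$ in $B\setminus\overline{D}$ I would infer $\mathbf{E}^s = -\mathbf{E}^i$ on that open set, and then by unique continuation (the exterior $\mathbb{R}^3\setminus\overline{D}$ is connected by assumption) this identity extends to all of $\mathbb{R}^3\setminus\overline{D}$. Since $\mathbf{E}^s$ is radiating while $-\mathbf{E}^i$ is an entire Herglotz wave, the uniqueness of the Silver-M\"uller radiation condition together with the standard injectivity of the Herglotz operator on $\mathbf{L}_t^2(\mathbb{S}^2)$ yields $\mathbf{E}^i\equiv\mathbf{0}$ and hence $\mathbf{g}=\mathbf{0}$, proving injectivity of $\boldsymbol{\mathcal{F}}_\lambda^{(\delta)}$. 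The only point requiring care is the boundary bookkeeping in the step matching $\un\times\curl\mathbf{w}$ to $\lambda\mathcal{S}_\delta\mathbf{w}_T$; everything else parallels the case $\delta=0$ treated in \cite{camano_lackner_monk} without modification, since the proof uses only that $\mathcal{S}_\delta$ is a well-defined bounded operator appearing identically in \eqref{deltaprob2} and \eqref{deltaeig2}.
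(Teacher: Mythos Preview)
Your proposal is correct and is precisely the approach the paper intends: the paper does not spell out a proof but states that the result follows ``by following the same reasoning as in the case $\delta=0$ (cf.\ \cite{camano_lackner_monk}),'' and your outline reproduces that argument, with the only change being the appearance of $\mathcal{S}_\delta$ in place of $\mathcal{S}_0$ in the boundary bookkeeping on $\partial B$.
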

We call a value of $\lambda$ for which \eqref{deltaeig} admits a nontrivial solution an \emph{elecromagnetic $\delta$-Stekloff eigenvalue}. We conclude this section with the following assumption on the wave number $k$, which ensures injectivity of a certain solution operator that we will introduce in Section \ref{sec_properties}.

\begin{assumption} \label{assumption:em_injective}

We assume that $k$ is chosen such that there exist no nontrivial solutions $\boldsymbol{\psi}\in\Hcurl{B}$ of the boundary value problem
\begin{subequations} \label{em_odd}
\begin{align}
\nabla\times\nabla\times\boldsymbol{\psi} - k^2\epsilon\boldsymbol{\psi} &= \mathbf{0} \text{ in } B, \label{em_odd1} \\
\sdiv_{\partial B}\left(\nu\times\nabla\times\boldsymbol{\psi}\right) &= 0 \text{ on } \partial B, \label{em_odd2} \\
\scurl_{\partial B} \boldsymbol{\psi}_T &= 0 \text{ on } \partial B. \label{em_odd3}
\end{align}
\end{subequations}

\end{assumption}

We first note that this assumption is automatically satisfied if $\Im(\epsilon)>0$ on an open subset of $D$. Furthermore, we justify this assumption by stating that it holds for all $k>0$ except in a discrete subset, which may be shown by investigating a weak formulation of \eqref{em_odd} and applying the analytic Fredholm theorem (cf. \cite[Theorem 8.26]{colton_kress}) in $\mathbb{C}\setminus\{0\}$. While we keep in mind the intended application to inverse scattering that motivated our consideration of \eqref{deltaeig}, the remainder of our discussion will concern only its spectral properties.
%We postpone this proof to the Appendix.

\section{Properties of the electromagnetic $\delta$-Stekloff eigenvalue problem}

\label{sec_properties}

In this section we investigate the properties of the electromagnetic $\delta$-Stekloff eigenvalues, and in order to do so we primarily study a nonhomogeneous version of \eqref{deltaeig} in which we seek $\mathbf{w}\in\Hcurl{B}$ satisfying
\begin{subequations} \label{deltastek}
\begin{align}
\curl\curl \mathbf{w} - k^2 \epsilon \mathbf{w} &= \mathbf{f} \text{ in } B, \label{deltastek1} \\
\un\times\curl \mathbf{w} - \lambda \mathcal{S}_\delta \mathbf{w}_T &= \mathbf{h} \text{ on } \partial B, \label{deltastek2}
\end{align}
\end{subequations}
for given $\mathbf{f}\in\mathbf{L}^2(B)$ and $\mathbf{h}\in\mathbf{H}(\sdiv_{\partial B}^0,\partial B)$. We see that \eqref{deltastek} is equivalent to the weak formulation of finding $\mathbf{w}\in\Hcurl{B}$ such that
\begin{align}
\begin{split} \label{var_deltastek}
&(\curl\mathbf{w},\curl\boldphi)_B - k^2(\epsilon\mathbf{w},\boldphi)_B + \lambda\inner{\mathcal{S}_\delta \mathbf{w}_T}{\boldphi_T}_{\partial B} \\
&\hspace{10em} = (\mathbf{f},\boldphi)_B - \inner{\mathbf{h}}{\boldphi_T}_{\partial B} \quad\forall\boldphi\in\Hcurl{B}.
\end{split}
\end{align}
%In a manner similar to \cite{camano_lackner_monk}, we see that \eqref{deltastek} is equivalent to the weak formulation of finding $\mathbf{w}\in\Hcurl{B}$ such that
%\begin{equation} \label{var_deltastek}
%\mathbf{a}_\lambda^{(\delta)}(\mathbf{w},\boldphi) = \boldsymbol{\ell}(\boldphi) \quad\forall\boldphi\in\Hcurl{B},
%\end{equation}
%where the bounded sesquilinear form $\mathbf{a}_\lambda^{(\delta)}(\cdot,\cdot)$ is defined by
%\begin{equation*}
%\mathbf{a}_\lambda^{(\delta)}(\mathbf{u},\boldphi) := (\curl\mathbf{u},\curl\boldphi)_B - k^2(\epsilon\mathbf{u},\boldphi)_B + \lambda\inner{\mathcal{S}_\delta \mathbf{u}_T}{\boldphi_T}_{\partial B} \quad\forall\mathbf{u},\boldphi\in\Hcurl{B}
%\end{equation*}
%and the continuous antilinear form $\boldsymbol{\ell}$ is defined by
%\begin{equation*}
%\boldsymbol{\ell}(\boldphi) := (\mathbf{f},\boldphi)_B - \inner{\mathbf{h}}{\boldphi_T}_{\partial B}  \quad\forall\boldphi\in\Hcurl{B}.
%\end{equation*}
We note that we have used $\inner{\cdot}{\cdot}_{\partial B}$ to denote the duality pairing of $\mathbf{H}^{-1/2}(\sdiv_{\partial B},\partial B)$ and $\mathbf{H}^{-1/2}(\scurl_{\partial B},\partial B)$ with conjugation in the second argument. We will use the same notation to denote the inner product on $\mathbf{L}_t^2(\partial B)$, and context should prevent any confusion. Since we are only assuming that $\mathbf{f}\in\mathbf{L}^2(B)$, we cannot immediately apply regularity results for Maxwell's equations in order to investigate solvability of \eqref{var_deltastek}, as we have no extra regularity of $\div(\epsilon\mathbf{w})$. Thus, we use the Helmholtz decomposition
\begin{equation*}
\Hcurl{B} = \boldH_0(B)\oplus\nabla H_*^1(B),
\end{equation*}
where we define the space
\begin{equation*}
\boldH_0(B) := \{\mathbf{u}\in\Hcurl{B} \mid \div(\epsilon\mathbf{w}) = 0 \text{ in } B, \; \un\cdot(\epsilon\mathbf{w}) = 0 \text{ on } \partial B\}.
\end{equation*}
We equip this space with the same inner product and norm as $\Hcurl{B}$, and from \cite[Theorem 4.24]{kirsch_hettlich} we observe that the space $\boldH_0(B)$ is compactly embedded into $\mathbf{L}^2(B)$. By writing a solution of \eqref{var_deltastek} as $\mathbf{w} = \mathbf{w}_0 + \nabla\psi$ for $\mathbf{w}_0\in\boldH_0(B)$ and $\psi\in H_*^1(B)$ and restricting the test functions to $\nabla H_*^1(B)$, we see that $\psi\in H_*^1(B)$ must satisfy
\begin{equation} \label{var_psi}
-k^2(\epsilon\nabla\psi,\nabla\psi')_B = (\mathbf{f},\nabla\psi')_B \quad\forall\psi'\in H_*^1(B).
\end{equation}
Well-posedness of \eqref{var_psi} implies that $\psi$ is uniquely determined by $\mathbf{f}$ and that the estimate $\norm{\psi}_{H^1(B)} \le C\norm{\mathbf{f}}_B$ holds. We now return to \eqref{var_deltastek} with the test functions instead restricted to $\boldH_0(B)$, and we observe that $\mathbf{w}_0\in\boldH_0(B)$ must satisfy
\begin{align}
\begin{split} \label{var0_deltastek}
&(\curl\mathbf{w}_0,\curl\boldphi_0)_B - k^2(\epsilon\mathbf{w}_0,\boldphi_0)_B + \lambda\inner{\mathcal{S}_\delta \mathbf{w}_{0,T}}{\boldphi_{0,T}}_{\partial B} \\
&\hspace{8em} = (\mathbf{f}+k^2\epsilon\nabla\psi,\boldphi_{0,T})_B - \inner{\mathbf{h}}{\boldphi_{0,T}}_{\partial B} \quad\forall\boldphi_0\in\boldH_0(B).
\end{split}
\end{align}
Conversely, solutions $\psi$ and $\mathbf{w}_0$ of \eqref{var_psi} and \eqref{var0_deltastek}, respectively, yield a solution $\mathbf{w} = \mathbf{w} + \nabla\psi$ of \eqref{deltastek}, and we use this equivalence in the following theorem to investigate when this problem is well-posed.

\begin{theorem} \label{theorem:Fredholm}

The nonhomogeneous $\delta$-Stekloff problem \eqref{deltastek} is of Fredholm type. In particular, if $\lambda$ is not an electromagnetic $\delta$-Stekloff eigenvalue, then there exists a unique solution $\mathbf{w}\in\Hcurl{B}$ of \eqref{deltastek} satisfying the estimate
\begin{equation*}
\normcurl{\mathbf{w}}{B} \le C\left( \norm{\mathbf{f}}_B + \norm{\mathbf{h}}_{\mathbf{H}(\sdiv_{\partial B}^0,\partial B)} \right).
\end{equation*}
Furthermore, the electromagnetic $\delta$-Stekloff eigenvalues form a discrete subset of $\mathbb{C}$ without finite accumulation point.

\end{theorem}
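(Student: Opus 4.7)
The plan is to continue from the variational reduction already constructed: since (\ref{var_psi}) uniquely determines $\psi$ with $\norm{\psi}_{H^1(B)}\le C\norm{\mathbf{f}}_B$, the nonhomogeneous problem (\ref{deltastek}) is solvable if and only if (\ref{var0_deltastek}) is solvable on $\boldH_0(B)$. I introduce the coercive-plus-compact decomposition
\[
a_\lambda(\mathbf{w}_0,\boldphi_0) := \inncurl{\mathbf{w}_0}{\boldphi_0}{B} - ((1+k^2\epsilon)\mathbf{w}_0,\boldphi_0)_B + \lambda\,\inner{\mathcal{S}_\delta \mathbf{w}_{0,T}}{\boldphi_{0,T}}_{\partial B},
\]
in which the first form is the $\Hcurl{B}$ inner product and hence represents the identity on $\boldH_0(B)$ under Riesz representation. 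Let $C,B\in\mathcal{L}(\boldH_0(B))$ be the Riesz representers of the second and third forms, so that (\ref{var0_deltastek}) becomes $(I+C+\lambda B)\mathbf{w}_0=R(\mathbf{f},\mathbf{h},\psi)$.

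For compactness of $C$ I will use the compact embedding $\boldH_0(B)\hookrightarrow\mathbf{L}^2(B)$ already cited above, which handles the volume term at once. For compactness of $B$ I exploit the symmetry (\ref{eq:Sdelta_positive}) to rewrite
\[
\inner{\mathcal{S}_\delta\mathbf{w}_{0,T}}{\boldphi_{0,T}}_{\partial B} = \inner{\mathcal{S}_{\delta/2}\mathbf{w}_{0,T}}{\mathcal{S}_{\delta/2}\boldphi_{0,T}}_{\partial B} = (T_\delta\mathbf{w}_0,T_\delta\boldphi_0)_{\mathbf{L}_t^2(\partial B)},
\]
where $T_\delta\mathbf{w}_0:=\mathcal{S}_{\delta/2}\mathbf{w}_{0,T}$. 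Proposition \ref{prop:opSdelta} applied with $\rho=-1/2$ and smoothing parameter $\delta/2$, composed with the bounded tangential trace $\Hcurl{B}\to\mathbf{H}^{-1/2}(\scurl_{\partial B},\partial B)$, yields that $T_\delta$ maps $\boldH_0(B)$ boundedly into $\mathbf{H}^{1/2+\delta}(\sdiv_{\partial B}^0,\partial B)$; the spectral characterizations of these Sobolev spaces show that this target embeds compactly into $\mathbf{L}_t^2(\partial B)$, so $T_\delta$ is compact and hence $B=T_\delta^*T_\delta$ is compact as well. Thus $I+C+\lambda B$ is a compact perturbation of the identity for every $\lambda$, and the Fredholm alternative plus the bound on $\psi$ yields the stated existence and stability assertions.

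For discreteness I apply the analytic Fredholm theorem to the entire map $\lambda\mapsto I+C+\lambda B$: the set of $\lambda\in\mathbb{C}$ at which it fails to be invertible is either all of $\mathbb{C}$ or a discrete subset with no finite accumulation point. To rule out the first alternative I exhibit one non-eigenvalue. Taking $\lambda_0=-i\tau$ with $\tau>0$ and letting $\mathbf{w}$ solve the associated homogeneous problem (so $\psi=0$ by (\ref{var_psi}) and $\mathbf{w}=\mathbf{w}_0$), I test against $\mathbf{w}$ and extract the imaginary part to obtain
\[
-k^2\int_D \Im(\epsilon)\,|\mathbf{w}|^2\,dx - \tau\,\norm{\mathcal{S}_{\delta/2}\mathbf{w}_T}_{\mathbf{L}_t^2(\partial B)}^2 = 0.
\]
Both terms are nonpositive, hence each vanishes. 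From $\mathcal{S}_{\delta/2}\mathbf{w}_T=\mathbf{0}$ and the eigenbasis expansion of $\mathbf{w}_T$ I deduce $\scurl_{\partial B}\mathbf{w}_T=0$, and from the semigroup identity $\mathcal{S}_{\delta_1}\mathcal{S}_{\delta_2}=\mathcal{S}_{\delta_1+\delta_2}$ also $\mathcal{S}_\delta\mathbf{w}_T=\mathbf{0}$, so the boundary condition forces $\un\times\curl\mathbf{w}=\mathbf{0}$ and therefore $\sdiv_{\partial B}(\un\times\curl\mathbf{w})=0$. Together with (\ref{deltaeig1}) these are exactly the hypotheses of Assumption \ref{assumption:em_injective}, which forces $\mathbf{w}=\mathbf{0}$.

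The main obstacle I anticipate is the compactness of $B$ in the limiting case $\delta=0$, where $\mathcal{S}_0$ is only bounded and not compact. It is precisely the symmetric reformulation of Proposition \ref{prop:S0} that saves the argument: it converts the asymmetric duality pairing into a Hilbert inner product of the smoother images $\mathcal{S}_0\mathbf{w}_T\in\mathbf{H}^{1/2}(\sdiv_{\partial B}^0,\partial B)$, and compactness is then extracted uniformly in $\delta\ge0$ from the compact embedding $\mathbf{H}^{1/2+\delta}(\sdiv_{\partial B}^0,\partial B)\hookrightarrow\mathbf{L}_t^2(\partial B)$.
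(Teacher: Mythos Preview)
Your proposal is correct and follows essentially the same route as the paper: a Helmholtz-decomposition reduction to $\boldH_0(B)$, a coercive-plus-compact splitting (the paper writes $\mathbb{\hat{A}}+\mathbb{B}_\lambda$ where your $C+\lambda B$ is absorbed into a single $\mathbb{B}_\lambda$), compactness via $\boldH_0(B)\hookrightarrow\mathbf{L}^2(B)$ and $\mathbf{H}_t^{1/2}(\partial B)\hookrightarrow\mathbf{L}_t^2(\partial B)$, and then the analytic Fredholm theorem together with the imaginary-part identity and Assumption~\ref{assumption:em_injective} to exhibit a non-eigenvalue. The only cosmetic differences are that you factor the boundary term symmetrically as $T_\delta^*T_\delta$ (the paper argues compactness directly on $\mathcal{S}_\delta\mathbf{u}_T$) and that you test a single $\lambda_0=-i\tau$ whereas the paper phrases the same computation as ``every eigenvalue has $\Im(\lambda)\ge0$''.
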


\begin{proof}

By means of the Riesz representation theorem we define the operators $\mathbb{\hat{A}}, \mathbb{B}_\lambda:\boldH_0(B)\to\boldH_0(B)$ such that
\begin{align*}
(\mathbb{\hat{A}}\mathbf{u},\boldphi_0)_{\boldH_0(B)} &= (\curl\mathbf{u},\curl\boldphi_0)_B + k^2(\mathbf{u},\boldphi_0)_B, \\
(\mathbb{B}_\lambda \mathbf{u},\boldphi_0)_{\boldH_0(B)} &= -k^2((1+\epsilon) \mathbf{u},\boldphi_0)_B + \lambda\inner{\mathcal{S}_\delta \mathbf{u}_T}{\boldphi_{0,T}}_{\partial B},
\end{align*}
for all $\mathbf{u},\boldphi_0\in\boldH_0(B)$, and we observe that \eqref{var0_deltastek} is equivalent to finding $\mathbf{w}_0\in\boldH_0(B)$ for which
\begin{equation*}
((\mathbb{\hat{A}} + \mathbb{B}_\lambda)\mathbf{w}_0,\boldphi_0)_{\boldH_0(B)} = (\mathbf{f}+k^2\epsilon\nabla\psi,\boldphi_{0,T})_B - \inner{\mathbf{h}}{\boldphi_{0,T}}_{\partial B} \quad\forall\boldphi_0\in\boldH_0(B).
\end{equation*}
As a consequence, it suffices to study the operator $\mathbb{\hat{A}} + \mathbb{B}_\lambda$. First, we see that $\mathbb{\hat{A}}$ is defined in terms of an equivalent inner product on $\boldH_0(B)$, and it follows from the Riesz representation theorem that the operator $\mathbb{\hat{A}}$ must be invertible. Second, for each $\mathbf{u}\in\boldH_0(B)$ we see that
\begin{align}
\norm{\mathbb{B}_\lambda \mathbf{u}}_{\boldH_0(B)} &= \sup_{\substack{\boldphi_0\in\boldH_0(B) \\ \norm{\boldphi_0}_{\boldH_0(B)}\le1}} \abs{-k^2((1+\epsilon) \mathbf{u},\boldphi_0)_B + \lambda\inner{\mathcal{S}_\delta \mathbf{u}_T}{\boldphi_{0,T}}_{\partial B}} \nonumber\\
&\le C\left( \norm{\mathbf{u}}_B + \norm{\mathcal{S}_\delta \mathbf{u}_T}_{\mathbf{H}(\sdiv_{\partial B}^0,\partial B)} \right), \label{eq:B_lambda}
\end{align}
where the constant $C>0$ depends only on $k$, $\epsilon$, $\lambda$, and $B$. If a sequence $\{\mathbf{u}_m\}$ converges weakly in $\boldH_0(B)$ to $\mathbf{u}_0\in\boldH_0(B)$, then the compact embedding of $\boldH_0(B)$ into $\mathbf{L}^2(B)$ implies that $\mathbf{u}_m\to\mathbf{u}_0$ in $\mathbf{L}^2(B)$. Moreover, boundedness of $\mathcal{S}_\delta$ into $\mathbf{H}^{1/2}(\sdiv_{\partial B}^0,\partial B)$ implies that $\mathcal{S}_\delta \mathbf{u}_{m,T}\rightharpoonup \mathcal{S}_\delta \mathbf{u}_{0,T}$ in $\mathbf{H}_t^{1/2}(\partial B)$, and from the compact embedding of the latter space into $\mathbf{L}_t^2(\partial B)$ we obtain $\mathcal{S}_\delta \mathbf{u}_{m,T}\to \mathcal{S}_\delta \mathbf{u}_{0,T}$ in $\mathbf{H}(\sdiv_{\partial B}^0,\partial B)$. Thus, we see from \eqref{eq:B_lambda} that $\mathbb{B}_\lambda \mathbf{u}_m \to \mathbb{B}_\lambda \mathbf{u}_0$ in $\boldH_0(B)$, and it follows that $\mathbb{B}_\lambda$ is compact. By combining these results, we conclude that the operator $\mathbb{\hat{A}} + \mathbb{B}_\lambda$ is a Fredholm operator of index zero, which implies that \eqref{var0_deltastek} is of Fredholm type. The aforementioned equivalence implies that the same holds for \eqref{deltastek}. In particular, this problem is well-posed whenever $\lambda$ is not an electromagnetic $\delta$-Stekloff eigenvalue. \par

%In particular, the operator $\mathbb{\hat{A}} + \mathbb{B}_\lambda$ is invertible whenever it is injective, which follows if $\lambda$ is not an electromagnetic $\delta$-Stekloff eigenvalue. In this case, we see that \eqref{var0_deltastek} is well-posed, and when combined with \eqref{var_psi} we conclude that \eqref{deltastek} is well-posed. Therefore, we have shown that \eqref{deltastek} is of Fredholm type. \par

We shall now establish discreteness of the eigenvalues. If $(\lambda,\mathbf{w})$ is an eigenpair of \eqref{deltaeig}, then $\mathbf{w}$ satisfies \eqref{var0_deltastek} with $\mathbf{f}=\mathbf{0}$, $\psi=0$, and $\mathbf{h}=\mathbf{0}$. Taking the imaginary part of both sides of this equation with $\boldphi_0 = \mathbf{w}$ (and applying the results of Proposition \ref{prop:opSdelta}) yields
\begin{equation*}
-k^2(\Im(\epsilon)\mathbf{w},\mathbf{w})_B + \Im(\lambda)\norm{\mathcal{S}_{\delta/2} \mathbf{w}_T}_{\mathbf{H}(\sdiv_{\partial B}^0,\partial B)} = 0.
\end{equation*}
If $\mathcal{S}_{\delta/2} \mathbf{w}_T = 0$, then we see that $\mathbf{w}$ satisfies \eqref{em_odd}, and by Assumption \ref{assumption:em_injective} we would have $\mathbf{w} = \mathbf{0}$ in $B$. Since the eigenfunction $\mathbf{w}$ must be nontrivial, we must have $\mathcal{S}_{\delta/2} \mathbf{w}_T \neq 0$, and we may solve for $\Im(\lambda)$ in order to obtain
\begin{equation*}
\Im(\lambda) = \frac{k^2(\Im(\epsilon)\mathbf{w},\mathbf{w})_B}{\norm{\mathcal{S}_{\delta/2} \mathbf{w}_T}_{\mathbf{H}(\sdiv_{\partial B}^0,\partial B)}} \ge 0.
\end{equation*}
Thus, we see that every eigenvalue satisfies $\Im(\lambda)\ge0$, and in particular we have shown that the operator $\mathbb{\hat{A}} + \mathbb{B}_\lambda$ is injective, and hence invertible, whenever $\Im(\lambda)<0$. Since the operator $\mathbb{B}_\lambda$ depends analytically on $\lambda$, the analytic Fredholm theorem now implies that $\mathbb{\hat{A}} + \mathbb{B}_\lambda$ is invertible for all $\lambda$ except in a discrete subset of $\mathbb{C}$ without finite accumulation point (cf. \cite[Theorem 8.26]{colton_kress}), which implies that the set of electromagnetic $\delta$-Stekloff eigenvalues is discrete. 
\end{proof}

In order to establish further properties of the electromagnetic $\delta$-Stekloff eigenvalues, we define the operator $\mathbf{T}_z^{(\delta)}:\mathbf{H}(\sdiv_{\partial B}^0,\partial B)\to \mathbf{H}(\sdiv_{\partial B}^0,\partial B)$ by $\mathbf{T}_z^{(\delta)}\mathbf{h} := \mathcal{S}_0\mathbf{w}_T$, where $\mathbf{w}\in\Hcurl{B}$ satisfies \eqref{deltastek} with $\lambda = z$ and $\mathbf{f}=\mathbf{0}$, i.e.
\begin{subequations} \label{defT}
\begin{align}
\curl\curl \mathbf{w} - k^2 \epsilon \mathbf{w} &= \mathbf{0} \text{ in } B, \label{defT1} \\
\un\times\curl \mathbf{w} - z \mathcal{S}_\delta \mathbf{w}_T &= \mathbf{h} \text{ on } \partial B. \label{defT2}
\end{align}
\end{subequations}
Here $z\in\mathbb{R}$ is chosen to be outside the set of electromagnetic $\delta$-Stekloff eigenvalues, which implies that $\mathbf{T}_z^{(\delta)}$ is well-defined by Theorem \ref{theorem:Fredholm}, and we note that we may choose such as a value of $z$ due to discreteness of the eigenvalues. We follow similar reasoning as in \cite[Lemma 3.4]{camano_lackner_monk} to obtain the following regularity property of $\mathbf{T}_z^{(\delta)}$.

\begin{proposition} \label{prop:ST}

For any $\delta\ge0$ the operator $\mathbf{T}_z^{(\delta)}: \mathbf{H}(\sdiv_{\partial B}^0,\partial B)\to \mathbf{H}^1(\sdiv_{\partial B}^0,\partial B)$ is bounded.

\end{proposition}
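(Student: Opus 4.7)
The plan is to reduce the claimed regularity of $\mathcal{S}_0\mathbf{w}_T$ to an $L^2(\partial B)$ bound on $\un\cdot\curl\mathbf{w}$, and then to produce the latter by a Maxwell regularity argument of the type performed in \cite[Lemma 3.4]{camano_lackner_monk}. First, since $z$ is not an electromagnetic $\delta$-Stekloff eigenvalue, Theorem \ref{theorem:Fredholm} gives the existence of a unique $\mathbf{w}\in\Hcurl{B}$ solving \eqref{defT} with $\normcurl{\mathbf{w}}{B} \le C\norm{\mathbf{h}}_{\mathbf{H}(\sdiv_{\partial B}^0,\partial B)}$. Combining the boundary condition \eqref{defT2} with the $\mathbf{L}_t^2(\partial B)$-boundedness of $\mathcal{S}_\delta$ (a consequence of Proposition \ref{prop:opSdelta} with $\rho=-1/2$ composed with the tangential trace theorem) then yields the improved regularity
\begin{equation*}
\norm{\un\times\curl\mathbf{w}}_{\mathbf{L}_t^2(\partial B)} \le C \norm{\mathbf{h}}_{\mathbf{H}(\sdiv_{\partial B}^0,\partial B)}.
\end{equation*}

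Second, I would use the spectral characterizations of Section \ref{sec_deltastek} to recast the target norm. Since $\mathcal{S}_0\mathbf{w}_T = \veccurl_{\partial B} q$ with $\Delta_{\partial B} q = \scurl_{\partial B}\mathbf{w}_T$, and $\sdiv_{\partial B}\mathcal{S}_0\mathbf{w}_T = 0$, a direct computation with the basis $\{\veccurl_{\partial B} Y_m\}$ shows that $\norm{\mathcal{S}_0\mathbf{w}_T}_{\mathbf{H}^1(\sdiv_{\partial B}^0,\partial B)}$ is equivalent to $\norm{\scurl_{\partial B}\mathcal{S}_0\mathbf{w}_T}_{L^2(\partial B)}$, and the chain of identities $\scurl_{\partial B}\mathcal{S}_0\mathbf{w}_T = \Delta_{\partial B} q = \scurl_{\partial B}\mathbf{w}_T = \un\cdot\curl\mathbf{w}$ reduces the desired estimate to the bound
\begin{equation*}
\norm{\un\cdot\curl\mathbf{w}}_{L^2(\partial B)} \le C\,\norm{\mathbf{h}}_{\mathbf{H}(\sdiv_{\partial B}^0,\partial B)}.
\end{equation*}

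Third, I would extract this $L^2$ normal trace via a Maxwell regularity argument. Taking the divergence of \eqref{defT1} gives $\div(\epsilon\mathbf{w}) = 0$ in $B$, and the piecewise $W_\Sigma^{1,\infty}$ regularity of $\epsilon$, together with smoothness of $\partial B$ and the improved $\mathbf{L}_t^2(\partial B)$ tangential trace of $\curl\mathbf{w}$ obtained above, makes the standard interior regularity results for Maxwell-type systems available on each subdomain $\Omega_j$. Assembling these into a global estimate mirrors the computation carried out in \cite[Lemma 3.4]{camano_lackner_monk} for the case $\delta=0$; since $\mathcal{S}_\delta\mathbf{w}_T$ is at least as regular for $\delta>0$ as for $\delta=0$, the same chain of inequalities transfers without essential modification to every $\delta\ge 0$.

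The chief obstacle lies in this third step. A generic element of $\Hcurl{B}$ has only $\mathbf{H}^{-1/2}(\sdiv_{\partial B},\partial B)$ control of the tangential trace of its curl, which falls well short of providing an $L^2$ normal trace for $\curl\mathbf{w}$. The required gain must be propagated from the combination of the PDE $\curl\curl\mathbf{w} = k^2\epsilon\mathbf{w}$, the divergence constraint on $\epsilon\mathbf{w}$, the piecewise Lipschitz structure of $\epsilon$, and the upgraded $\mathbf{L}_t^2$ boundary datum. Once this is in hand, the rest of the argument reduces to bookkeeping with the Laplace-Beltrami eigenbasis.
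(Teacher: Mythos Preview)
Your first two steps are correct and match the paper exactly: one shows $\un\times\curl\mathbf{w}\in\mathbf{L}_t^2(\partial B)$ from the boundary condition, and reduces the target norm to an $L^2(\partial B)$ bound on $\un\cdot\curl\mathbf{w}$ via the identity $\scurl_{\partial B}\mathbf{w}_T=\un\cdot\curl\mathbf{w}$.

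Your third step, however, is aimed at the wrong field. You propose to obtain the normal trace of $\curl\mathbf{w}$ by establishing regularity of $\mathbf{w}$ through the constraint $\div(\epsilon\mathbf{w})=0$ and the piecewise $W_\Sigma^{1,\infty}$ structure of $\epsilon$, patched over the subdomains $\Omega_j$. Even if this could be made to work, it is unnecessarily delicate: regularity of $\mathbf{w}$ does not translate directly into an $L^2$ normal trace for $\curl\mathbf{w}$, and the subdomain patching introduces interface terms that are irrelevant to the claim. The paper's proof bypasses all of this by applying Costabel's regularity result \cite{costabel} directly to the field $\mathbf{u}:=\curl\mathbf{w}$ rather than to $\mathbf{w}$. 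One has $\curl\mathbf{u}=k^2\epsilon\mathbf{w}\in\mathbf{L}^2(B)$ and $\div\mathbf{u}=\div\curl\mathbf{w}=0$ identically, so $\mathbf{u}\in\Hcurl{B}\cap\Hdiv{B}$ with no reference to $\epsilon$ in the divergence constraint; combined with the tangential trace $\un\times\mathbf{u}\in\mathbf{L}_t^2(\partial B)$ already in hand, Costabel's theorem gives $\un\cdot\mathbf{u}\in L^2(\partial B)$ immediately. The ``chief obstacle'' you identify thus dissolves once you recognize that the relevant divergence constraint is the trivial one on $\curl\mathbf{w}$, not the $\epsilon$-weighted one on $\mathbf{w}$.
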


\begin{proof}

For a given $\mathbf{h}\in\mathbf{H}(\sdiv_{\partial B}^0,\partial B)$, we let $\mathbf{w}$ satisfy \eqref{defT}, and it follows that $\mathbf{T}_z^{(\delta)} \mathbf{h} = \mathcal{S}_0 \mathbf{w}_T$. We see from the boundary condition \eqref{defT2} that $\un\times\curl\mathbf{w}\in\mathbf{L}_t^2(\partial B)$, and since $\curl\mathbf{w}\in\Hcurl{B}\cap\Hdiv{B}$ we obtain $\un\cdot\curl\mathbf{w}\in L^2(\partial B)$ (cf. \cite{costabel}). From the identity $\scurl_{\partial B} \mathbf{w}_T = \un\cdot\curl\mathbf{w}$ we have $\scurl_{\partial B}\mathbf{w}_T\in L^2(\partial B)$. Recalling the definition of the operator $\mathcal{S}_0$, there exists a unique $q\in H^1(\partial B)/\mathbb{C}$ satisfying $\Delta_{\partial B} q = \scurl_{\partial B} \mathbf{w}_T$ for which $\mathcal{S}_0 \mathbf{w}_T = \veccurl_{\partial B} q$. We observe that $\Delta_{\partial B} q \in L^2(\partial B)$, which implies that $\veccurl_{\partial B} q\in \mathbf{H}^1(\sdiv_{\partial B}^0,\partial B)$ with the estimate
\begin{equation*}
\norm{\veccurl_{\partial B} q}_{\mathbf{H}^1(\sdiv_{\partial B}^0,\partial B)}\le C\norm{\scurl_{\partial B} \mathbf{w}_T}_{L^2(\partial B)}.
\end{equation*}
By the regularity estimate from \cite{costabel} and well-posedness of \eqref{defT} we conclude that $\mathbf{T}_z^{(\delta)}$ is bounded into $\mathbf{H}^1(\sdiv_{\partial B}^0,\partial B)$.
\end{proof}

The operator $\mathbf{T}_0^{(0)}$ was used in \cite{camano_lackner_monk} in order to obtain results on the standard electromagnetic Stekloff eigenvalues. However, for $\delta>0$ the spectrum of this operator no longer has a clear relationship to the set of $\delta$-Stekloff eigenvalues, and we must instead consider the operator $\boldPsi_z^{(\delta)}:\mathbf{H}(\div_{\partial B}^0,\partial B)\to \mathbf{H}(\div_{\partial B}^0,\partial B)$ defined by $\boldPsi_z^{(\delta)} := \mathcal{S}_{\delta/2} \mathbf{T}_z^{(\delta)} \mathcal{S}_{\delta/2}$. We remark that Proposition \ref{prop:ST} and the smoothing property of $\mathcal{S}_\delta$ stated in Proposition \ref{prop:opSdelta} immediately imply that the operator $\boldPsi_z^{(\delta)}$ is bounded into $\mathbf{H}^{1+\delta}(\sdiv_{\partial B}^0,\partial B)$ and hence must be compact. We follow the same reasoning as in \cite[Lemma 4.4]{cogar2020} to establish the following relationship between the set of electromagnetic $\delta$-Stekloff eigenvalues and the spectrum of $\boldPsi_z^{(\delta)}$. For future use, we prove a slightly more general result.

\begin{proposition} \label{prop:Psi}

For a given $\delta\ge0$, let $\delta_1, \delta_2$ be nonnegative numbers such that $\delta_1 + \delta_2 = \delta$. Then a given $\lambda\in\mathbb{C}$ is an electromagnetic $\delta$-Stekloff eigenvalue if and only if $(\lambda - z)^{-1}$ is an eigenvalue of the operator $\mathcal{S}_{\delta_1} \mathbf{T}_z^{(\delta)} \mathcal{S}_{\delta_2}$.

\end{proposition}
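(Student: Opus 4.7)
The plan is to convert the eigenvalue equation \eqref{deltaeig} for $\lambda$ into an eigenvalue equation for $\mathcal{S}_{\delta_1}\mathbf{T}_z^{(\delta)}\mathcal{S}_{\delta_2}$, and back, by purely algebraic manipulation exploiting two facts about the smoothing operators: the semigroup property $\mathcal{S}_{\delta_1}\mathcal{S}_{\delta_2} = \mathcal{S}_\delta$ noted just before Proposition \ref{prop:opSdelta}, and the absorption identity $\mathcal{S}_\alpha\mathcal{S}_0 = \mathcal{S}_\alpha$ for any $\alpha\ge0$. The latter is immediate from the spectral representations in Section \ref{sec_deltastek}, since on a tangential field $\boldsymbol{\xi}$ with expansion \eqref{eq:vector_expand} both sides act as $\sum_{m\ge1}\mu_m^{-\alpha}\boldsymbol{\xi}_m^{(2)}\veccurl_{\partial B} Y_m$.

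For the ``only if'' direction, given an eigenpair $(\lambda,\mathbf{w})$ of \eqref{deltaeig} I would rewrite the boundary condition \eqref{deltaeig2} as
\begin{equation*}
\un\times\curl\mathbf{w} - z\,\mathcal{S}_\delta\mathbf{w}_T = (\lambda-z)\mathcal{S}_\delta\mathbf{w}_T = (\lambda-z)\mathcal{S}_{\delta_2}\bigl(\mathcal{S}_{\delta_1}\mathbf{w}_T\bigr),
\end{equation*}
so that $\mathbf{w}$ solves \eqref{defT} with data $\mathbf{h} = (\lambda-z)\mathcal{S}_{\delta_2}(\mathcal{S}_{\delta_1}\mathbf{w}_T)$. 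The definition of $\mathbf{T}_z^{(\delta)}$ then yields $(\lambda-z)\mathbf{T}_z^{(\delta)}\mathcal{S}_{\delta_2}(\mathcal{S}_{\delta_1}\mathbf{w}_T) = \mathcal{S}_0\mathbf{w}_T$, and applying $\mathcal{S}_{\delta_1}$ together with the absorption identity would give
\begin{equation*}
(\lambda-z)\,\mathcal{S}_{\delta_1}\mathbf{T}_z^{(\delta)}\mathcal{S}_{\delta_2}\bigl(\mathcal{S}_{\delta_1}\mathbf{w}_T\bigr) = \mathcal{S}_{\delta_1}\mathbf{w}_T,
\end{equation*}
which is precisely the desired eigenvalue relation with eigenvector $\mathcal{S}_{\delta_1}\mathbf{w}_T$. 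For the converse direction, given a nonzero $\mathbf{h}$ with $\mathcal{S}_{\delta_1}\mathbf{T}_z^{(\delta)}\mathcal{S}_{\delta_2}\mathbf{h} = (\lambda-z)^{-1}\mathbf{h}$, I would define $\mathbf{w}$ as the unique solution of \eqref{defT} with data $\mathcal{S}_{\delta_2}\mathbf{h}$; then $\mathcal{S}_{\delta_1}\mathbf{w}_T = \mathcal{S}_{\delta_1}\mathcal{S}_0\mathbf{w}_T = \mathcal{S}_{\delta_1}\mathbf{T}_z^{(\delta)}\mathcal{S}_{\delta_2}\mathbf{h} = (\lambda-z)^{-1}\mathbf{h}$, so $\mathcal{S}_{\delta_2}\mathbf{h} = (\lambda-z)\mathcal{S}_\delta\mathbf{w}_T$, and substituting into \eqref{defT2} would yield $\un\times\curl\mathbf{w} = \lambda\mathcal{S}_\delta\mathbf{w}_T$, i.e.\ $\mathbf{w}$ is a solution of \eqref{deltaeig}.

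The main obstacle is verifying nontriviality of the candidate eigenvectors at each end, which is exactly where Assumption \ref{assumption:em_injective} enters. In the forward direction I would need to exclude $\mathcal{S}_{\delta_1}\mathbf{w}_T = \mathbf{0}$; the spectral formula forces every curl coefficient $(\mathbf{w}_T)_m^{(2)}$ to vanish, which makes both $\mathcal{S}_\delta\mathbf{w}_T$ and $\scurl_{\partial B}\mathbf{w}_T$ zero. The eigenvalue equation then forces $\un\times\curl\mathbf{w} = \mathbf{0}$, so the three conditions \eqref{em_odd1}--\eqref{em_odd3} are met and Assumption \ref{assumption:em_injective} gives $\mathbf{w} = \mathbf{0}$, contradicting that $\mathbf{w}$ is an eigenfunction. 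In the backward direction, if the constructed $\mathbf{w}$ vanished then $\mathcal{S}_0\mathbf{w}_T = \mathbf{0}$, and the identity $\mathcal{S}_{\delta_1}\mathbf{w}_T = (\lambda-z)^{-1}\mathbf{h}$ would give $\mathbf{h} = \mathbf{0}$, again a contradiction. Apart from this appeal to the injectivity assumption, the proof should be pure algebraic bookkeeping with no further technical difficulty.
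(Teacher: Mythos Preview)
Your proposal is correct and follows essentially the same route as the paper: rewrite \eqref{deltaeig2} with the shift $z$, identify the resulting boundary data with $\mathcal{S}_{\delta_2}$ applied to the candidate eigenvector $\mathcal{S}_{\delta_1}\mathbf{w}_T$, and invoke Assumption~\ref{assumption:em_injective} for nontriviality. You have in fact spelled out more detail than the paper does---the absorption identity $\mathcal{S}_\alpha\mathcal{S}_0=\mathcal{S}_\alpha$, the explicit converse, and the nontriviality check in both directions---whereas the paper simply cites the proof of Theorem~\ref{theorem:Fredholm} and says the reverse direction follows by running the steps backward.
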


\begin{proof}

We first note that $\mathcal{S}_\delta = \mathcal{S}_{\delta_2} \mathcal{S}_{\delta_1}$. We suppose that $\lambda$ is an electromagnetic $\delta$-Stekloff eigenvalue with eigenfunction $\mathbf{w}$, and we rewrite the boundary condition \eqref{deltaeig2} as
\begin{equation*}
\un\times\curl\mathbf{w} - z\mathcal{S}_\delta \mathbf{w}_T = (\lambda-z)\mathcal{S}_\delta \mathbf{w}_T \text{ on } \partial B.
\end{equation*}
If we define $\mathbf{h} := (\lambda-z)\mathcal{S}_{\delta_1} \mathbf{w}_T$, then it follows from the definition of $\mathbf{T}_z^{(\delta)}$ that $\mathcal{S}_0\mathbf{w}_T = \mathbf{T}_z^{(\delta)}\mathcal{S}_{\delta_2}\mathbf{h}$, and we observe that
\begin{equation*}
\mathcal{S}_{\delta_1}\mathbf{T}_z^{(\delta)}\mathcal{S}_{\delta_2}\mathbf{h} = \mathcal{S}_{\delta_1} \mathbf{w}_T = (\lambda-z)^{-1}\mathbf{h}.
\end{equation*}
As in the proof of Theorem \ref{theorem:Fredholm}, we see that Assumption \ref{assumption:em_injective} implies that $\mathbf{h}\neq\mathbf{0}$, and as a result we conclude that $(\lambda-z)^{-1}$ is an eigenvalue of the operator $\mathcal{S}_{\delta_1} \mathbf{T}_z^{(\delta)} \mathcal{S}_{\delta_2}$. Following the same steps in reverse order yields the converse.
\end{proof}

In particular, by choosing $\delta_1 = \delta_2 = \frac{\delta}{2}$ in Proposition \ref{prop:Psi} we see that $\lambda$ is an electromagnetic $\delta$-Stekloff eigenvalue if and only if $(\lambda-z)^{-1}$ is an eigenvalue of $\boldPsi_z^{(\delta)}$. Before proceeding further, we state the following lemma as an immediate consequence of Assumption \ref{assumption:em_injective}.

\begin{lemma} \label{lemma:em_nullspace}

The operator $\boldPsi_z^{(\delta)}$ is injective.

\end{lemma}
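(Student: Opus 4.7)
The plan is to unwind the definition of $\boldPsi_z^{(\delta)}$ and reduce injectivity to Assumption~\ref{assumption:em_injective}, using the spectral characterizations from Section~\ref{sec_deltastek} as the key translation device. Suppose $\boldPsi_z^{(\delta)}\mathbf{h} = \mathbf{0}$ for some $\mathbf{h}\in\mathbf{H}(\sdiv_{\partial B}^0,\partial B)$, and let $\mathbf{w}\in\Hcurl{B}$ denote the (unique, by Theorem~\ref{theorem:Fredholm}) solution of \eqref{defT} with the datum $\mathbf{h}$ on the right-hand side of \eqref{defT2} replaced by $\mathcal{S}_{\delta/2}\mathbf{h}$, so that $\mathbf{T}_z^{(\delta)}\mathcal{S}_{\delta/2}\mathbf{h} = \mathcal{S}_0\mathbf{w}_T$. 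Exploiting the semigroup identity $\mathcal{S}_{\delta/2}\mathcal{S}_0 = \mathcal{S}_{\delta/2}$, the hypothesis becomes
\begin{equation*}
\mathcal{S}_{\delta/2}\mathbf{w}_T = \mathbf{0}.
\end{equation*}

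Next I expand $\mathbf{w}_T$ in the vector eigenbasis of $\boldsymbol{\Delta}_{\partial B}$ as in \eqref{eq:vector_expand}. Since $\mathcal{S}_{\delta/2}$ annihilates the $\nabla_{\partial B}Y_m$ components and multiplies the $\veccurl_{\partial B}Y_m$ components by the strictly positive weights $\mu_m^{-\delta/2}$, the vanishing of $\mathcal{S}_{\delta/2}\mathbf{w}_T$ forces $\mathbf{w}_{T,m}^{(2)} = 0$ for all $m\ge 1$; thus $\mathbf{w}_T$ lies in the closed span of $\{\nabla_{\partial B}Y_m\}$, which is exactly the kernel of $\scurl_{\partial B}$, giving $\scurl_{\partial B}\mathbf{w}_T = 0$ on $\partial B$. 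Moreover $\mathcal{S}_\delta\mathbf{w}_T = \mathcal{S}_{\delta/2}(\mathcal{S}_{\delta/2}\mathbf{w}_T) = \mathbf{0}$, so \eqref{defT2} reduces to $\un\times\curl\mathbf{w} = \mathcal{S}_{\delta/2}\mathbf{h}$; since $\mathcal{S}_{\delta/2}$ maps into $\mathbf{H}^{1/2}(\sdiv_{\partial B}^0,\partial B)$, also $\sdiv_{\partial B}(\un\times\curl\mathbf{w}) = 0$. Combined with \eqref{defT1}, the field $\mathbf{w}$ satisfies \eqref{em_odd1}--\eqref{em_odd3}, so Assumption~\ref{assumption:em_injective} yields $\mathbf{w} = \mathbf{0}$ in $B$. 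The boundary condition then collapses to $\mathcal{S}_{\delta/2}\mathbf{h} = \mathbf{0}$, and it remains to verify that $\mathcal{S}_{\delta/2}$ is injective on $\mathbf{H}(\sdiv_{\partial B}^0,\partial B)$: the condition $\sdiv_{\partial B}\mathbf{h} = 0$ together with $\sdiv_{\partial B}\nabla_{\partial B}Y_m = -\mu_m Y_m$ forces $\mathbf{h}_m^{(1)} = 0$ for all $m\ge 1$, so $\mathbf{h}$ lies in the veccurl subspace, on which $\mathcal{S}_{\delta/2}$ is diagonal with strictly positive entries $\mu_m^{-\delta/2}$; hence $\mathbf{h} = \mathbf{0}$.

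The main obstacle is the first reduction: one must exploit the precise action of $\mathcal{S}_{\delta/2}$ on each component of the vector eigenbasis of $\boldsymbol{\Delta}_{\partial B}$ to translate the operator-level equation $\mathcal{S}_{\delta/2}\mathbf{w}_T = \mathbf{0}$ into the pointwise geometric conditions $\scurl_{\partial B}\mathbf{w}_T = 0$ and $\mathcal{S}_\delta\mathbf{w}_T = \mathbf{0}$ demanded by Assumption~\ref{assumption:em_injective}. Once that translation is in place, everything else is bookkeeping with the semigroup property $\mathcal{S}_{\delta_1}\mathcal{S}_{\delta_2} = \mathcal{S}_{\delta_1+\delta_2}$ and the spectral characterizations of the boundary Sobolev spaces.
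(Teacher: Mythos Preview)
Your proof is correct and follows the same route the paper intends: the paper states the lemma without proof as ``an immediate consequence of Assumption~\ref{assumption:em_injective},'' and you have simply written out that implication in full, unwinding $\boldPsi_z^{(\delta)} = \mathcal{S}_{\delta/2}\mathbf{T}_z^{(\delta)}\mathcal{S}_{\delta/2}$, using the spectral action of $\mathcal{S}_{\delta/2}$ to obtain the boundary conditions \eqref{em_odd2}--\eqref{em_odd3}, and then invoking injectivity of $\mathcal{S}_{\delta/2}$ on $\mathbf{H}(\sdiv_{\partial B}^0,\partial B)$ to conclude. Nothing here departs from the paper's intended argument.
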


%\begin{proof}
%
%If $\boldsymbol{\Psi}_z^{(\delta)}\mathbf{h} = \mathbf{0}$ for some $\mathbf{h}\in\mathbf{H}(\div_{\partial B}^0,\partial B)$ and $\mathbf{w}\in\Hcurl{B}$ is such that $\mathcal{S}_0\mathbf{w}_T = \mathbf{T}_z^{(\delta)} \mathcal{S}_{\delta/2}\mathbf{h}$, then we observe that $\mathbf{w}$ must satisfy
%\begin{subequations}
%\begin{align}
%\nabla\times\nabla\times\mathbf{w} - k^2\epsilon\mathbf{w} &= \mathbf{0} \text{ in } B, \label{inj1}\\
%\nu\times\nabla\times\mathbf{w} &= z\mathcal{S}_\delta \mathbf{w}_T + \mathcal{S}_{\delta/2} \mathbf{h} \text{ on } \partial B, \label{inj2}\\
%\mathcal{S}_{\delta/2} \mathbf{w}_T &= \mathbf{0} \text{ on } \partial B. \label{inj3}
%\end{align}
%\end{subequations}
%In particular, we see that by definition of $\mathcal{S}_{\delta/2}$ the vector field $\nu\times\nabla\times\mathbf{w}$ must have vanishing surface divergence and the vector field $\mathbf{w}_T$ must have vanishing scalar surface curl, and consequently $\mathbf{w}\in\mathbf{H}(\curl,B)$ must satisfy \eqref{em_odd}. Thus, from Assumption \ref{assumption:em_injective} we obtain $\mathbf{w} = \mathbf{0}$, and it follows from the boundary condition \eqref{inj2} that $\mathbf{h} = \mathbf{0}$ and $\boldsymbol{\Psi}_z^{(\delta)}$ is injective.
%\end{proof}

We now turn our attention to showing existence of eigenvalues, and from Proposition \eqref{prop:Psi} we see that it suffices to establish existence of eigenvalues of the operator $\boldPsi_z^{(\delta)}$.
%In a similar manner to the remarks following Lemma 3.5 in \cite{camano_lackner_monk}, the operator $\boldPsi_z^{(\delta)}$ may be considered as an operator on $\mathbf{H}(\sdiv_{\partial B}^0,\partial B)$. Since $\boldPsi_z^{(\delta)}(\mathbf{H}(\sdiv_{\partial B}^0,\partial B)) \subset \mathbf{H}^{1/2}(\sdiv_{\partial B}^0,\partial B)$, we observe that the spectrum of $\boldPsi_z^{(\delta)}$ is unchanged by enlarging its domain.
If $\epsilon$ is real-valued, then by the same reasoning in \cite[Lemma 3.5]{camano_lackner_monk} we see that $\boldPsi_z^{(\delta)}$ is a compact self-adjoint operator on the Hilbert space $\mathbf{H}(\sdiv_{\partial B}^0,\partial B)$, and we obtain the following result from the Hilbert-Schmidt theorem and Lemma \ref{lemma:em_nullspace}.

\begin{theorem} \label{theorem:real_eps}

If $\epsilon$ is real-valued, then there exist infinitely many electromagnetic $\delta$-Stekloff eigenvalues, and all eigenvalues are real.
%Moreover, the eigenfunctions of $\boldPsi_z^{(\delta)}$ form an orthonormal basis of $\mathbf{H}(\sdiv_{\partial B}^0,\partial B)$.

\end{theorem}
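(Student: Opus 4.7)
The plan is to use the fact that $\boldPsi_z^{(\delta)}$ is a compact operator on $\mathbf{H}(\sdiv_{\partial B}^0,\partial B)$ and to establish that it is self-adjoint when $\epsilon$ is real-valued, and then to conclude by combining the Hilbert--Schmidt theorem with the injectivity of $\boldPsi_z^{(\delta)}$ furnished by Lemma \ref{lemma:em_nullspace}. Since the $\delta$-Stekloff eigenvalues form a discrete subset of $\mathbb{C}$ (Theorem \ref{theorem:Fredholm}), I may choose the shift $z$ to be real; the induced inner product on $\mathbf{H}(\sdiv_{\partial B}^0,\partial B)$ coincides with that of $\mathbf{L}_t^2(\partial B)$, so I can work with that pairing throughout.

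The core step is the verification of self-adjointness along the lines of \cite[Lemma 3.5]{camano_lackner_monk}. Given $\mathbf{h}_1, \mathbf{h}_2\in\mathbf{H}(\sdiv_{\partial B}^0,\partial B)$, let $\mathbf{w}_1, \mathbf{w}_2$ denote the corresponding solutions of \eqref{defT} with right-hand sides $\mathcal{S}_{\delta/2}\mathbf{h}_1$ and $\mathcal{S}_{\delta/2}\mathbf{h}_2$. Since $\mathcal{S}_{\delta/2}\mathcal{S}_0 = \mathcal{S}_{\delta/2}$ (a consequence of the semigroup identity $\mathcal{S}_{\delta_1}\mathcal{S}_{\delta_2}=\mathcal{S}_{\delta_1+\delta_2}$), it follows that $\boldPsi_z^{(\delta)}\mathbf{h}_i = \mathcal{S}_{\delta/2}\mathbf{w}_{i,T}$. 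I would then compute $(\boldPsi_z^{(\delta)}\mathbf{h}_1, \mathbf{h}_2)_{\partial B}$ by applying \eqref{eq:Sdelta_positive} to transfer $\mathcal{S}_{\delta/2}$ across the pairing, testing the weak formulation \eqref{var_deltastek} for $\mathbf{w}_1$ against $\mathbf{w}_2$ and vice versa, and exploiting the fact that $\epsilon$ and $z$ are real to observe that the resulting sesquilinear form is symmetric under $1\leftrightarrow 2$ followed by conjugation. This yields $(\boldPsi_z^{(\delta)}\mathbf{h}_1, \mathbf{h}_2)_{\partial B} = (\mathbf{h}_1, \boldPsi_z^{(\delta)}\mathbf{h}_2)_{\partial B}$.

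With self-adjointness in hand, and compactness already noted after Proposition \ref{prop:ST}, the Hilbert--Schmidt theorem produces an orthonormal basis of $\mathbf{H}(\sdiv_{\partial B}^0,\partial B)$ consisting of eigenfunctions of $\boldPsi_z^{(\delta)}$ with real eigenvalues accumulating only at zero. Because Lemma \ref{lemma:em_nullspace} forbids a nontrivial kernel and $\mathbf{H}(\sdiv_{\partial B}^0,\partial B)$ is infinite-dimensional (its elements are parametrized by the coefficients $\boldsymbol{\xi}_m^{(2)}$ appearing in \eqref{eq:vector_expand}), there must be infinitely many nonzero real eigenvalues $\mu_n$. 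Proposition \ref{prop:Psi} with $\delta_1=\delta_2=\delta/2$ then yields infinitely many electromagnetic $\delta$-Stekloff eigenvalues $\lambda_n = z + \mu_n^{-1}$, each of which is real.

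The delicate step is the self-adjointness computation, which requires careful bookkeeping of the duality pairings between $\mathbf{H}^{-1/2}(\sdiv_{\partial B},\partial B)$ and $\mathbf{H}^{-1/2}(\scurl_{\partial B},\partial B)$ and repeated use of the symmetry identity \eqref{eq:Sdelta_positive} to shift $\mathcal{S}_{\delta/2}$ between arguments; once this is settled, the remaining steps are structural consequences of the Hilbert--Schmidt theorem and the spectral correspondence of Proposition \ref{prop:Psi}.
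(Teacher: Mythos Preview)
Your proposal is correct and follows essentially the same approach as the paper: the paper simply asserts that, by the same reasoning as in \cite[Lemma 3.5]{camano_lackner_monk}, the operator $\boldPsi_z^{(\delta)}$ is compact and self-adjoint on $\mathbf{H}(\sdiv_{\partial B}^0,\partial B)$, and then invokes the Hilbert--Schmidt theorem together with the injectivity from Lemma~\ref{lemma:em_nullspace}. You have merely spelled out the self-adjointness verification and the spectral correspondence via Proposition~\ref{prop:Psi} in more detail than the paper does.
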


However, in the case that $\epsilon$ is not real-valued, the operator $\boldPsi_z^{(\delta)}$ is not self-adjoint and no existence results are known for the standard electromagnetic Stekloff eigenvalues. We now proceed to show that for large enough $\delta$ the operator $\boldPsi_z^{(\delta)}$ is trace class, as defined below, and that we may apply Lidski's Theorem in order to conclude the existence of infinitely many eigenvalues even for complex-valued $\epsilon$. We first define trace class operators and state Lidski's Theorem (cf. \cite{colton_kress,ringrose}).

\begin{definition} \label{definition:trace_class}

An operator $T$ on a Hilbert space is a \emph{trace class operator} if there exists a sequence of operators $\{T_m\}$ for which $T_m$ has rank no greater than $m$ and
\begin{equation*}
\sum_{m=1}^\infty \norm{T - T_m} < \infty.
\end{equation*}

\end{definition}

\begin{theorem} \label{theorem:lidski} \textnormal{(Lidski's Theorem)} If $T$ is a trace class operator on a Hilbert space $X$ such that $T$ has finite-dimensional nullspace and $\Im(Tg,g)_X\ge0$ for each $g\in X$, then $T$ has an infinite number of eigenvalues.
%Moreover, if $T$ is injective, then the closed subspace generated by the generalized eigenvectors of $T$ coincides with $X$.
\end{theorem}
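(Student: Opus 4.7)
The plan is to argue by contradiction: suppose $T$ has only finitely many eigenvalues (counted with algebraic multiplicity) and deduce that $\ker T$ must be infinite-dimensional. The key external input I would take is the classical Lidski trace identity $\mathrm{tr}(T) = \sum_j \lambda_j(T)$ for a trace class operator, which is what is referenced in \cite{colton_kress,ringrose}; the stated form of the theorem should follow as a consequence.

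First, I would invoke Riesz--Schauder theory for the compact operator $T$ to produce a topological direct sum decomposition $X = M \oplus N'$, where $M$ is the finite-dimensional sum of the generalized eigenspaces corresponding to the (finitely many) nonzero eigenvalues of $T$, and $N'$ is a closed $T$-invariant subspace on which $T|_{N'}$ is compact with $\sigma(T|_{N'}) \subseteq \{0\}$. A short direct computation using that $\lambda_i I - T$ is invertible on $N'$ for each nonzero $\lambda_i$ shows that every $g \in \ker T$ actually lies in $N'$, and hence $\ker T = \ker(T|_{N'})$.

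Next I would show that $T|_{N'} = 0$. Since $T|_{N'}$ is a compression of a trace class operator to a closed subspace, it is itself trace class on $N'$, and for $g \in N'$ the hypothesis $\Im\,(Tg,g)_X \ge 0$ transfers to $\Im\,(T|_{N'} g,g)_{N'} \ge 0$. Writing $T|_{N'} = A_0 + i B_0$ with $A_0, B_0$ self-adjoint trace class and $B_0 \ge 0$, the classical Lidski identity applied to the quasi-nilpotent operator $T|_{N'}$ yields $\mathrm{tr}(T|_{N'}) = 0$; taking imaginary parts gives $\mathrm{tr}(B_0) = 0$, and since $B_0$ is a nonnegative trace class operator this forces $B_0 = 0$. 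Hence $T|_{N'} = A_0$ is a self-adjoint compact operator whose spectrum is $\{0\}$, and equality of norm and spectral radius for self-adjoint operators gives $A_0 = 0$. Consequently $N' = \ker(T|_{N'}) = \ker T$ must be finite-dimensional, and together with $\dim M < \infty$ this forces $X$ itself to be finite-dimensional, contradicting the infinite-dimensional setting in which the stated conclusion is nontrivial. The main technical obstacle I anticipate is the careful treatment of the Riesz decomposition, since $X = M \oplus N'$ is only a topological direct sum: one must verify that $T|_{N'}$ is genuinely trace class on $N'$ with its induced Hilbert space structure and that the numerical range condition transfers cleanly from $X$ to $N'$.
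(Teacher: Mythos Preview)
The paper does not actually prove this statement: Theorem~\ref{theorem:lidski} is quoted from the literature (with references to \cite{colton_kress,ringrose}) and used as a black box to deduce Theorem~\ref{theorem:existence}. So there is no ``paper's proof'' to compare against; what you have written is a self-contained derivation of the quoted form of Lidski's theorem from the classical trace identity $\mathrm{tr}(S)=\sum_j\lambda_j(S)$.

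Your argument is correct. The worry you flag about the topological (non-orthogonal) decomposition $X=M\oplus N'$ is not really an obstacle: because $N'$ is $T$-invariant and closed, the restriction $T|_{N'}$ coincides with the compression $\iota^{*}T\iota$, where $\iota:N'\hookrightarrow X$ is the isometric inclusion and $\iota^{*}$ is the orthogonal projection onto $N'$. This immediately gives that $T|_{N'}$ is trace class on $N'$ with its induced Hilbert structure, and for $g\in N'$ one has $(T|_{N'}g,g)_{N'}=(Tg,g)_X$, so the numerical-range hypothesis transfers verbatim. The rest of your chain---Lidski's identity forces $\mathrm{tr}(T|_{N'})=0$, whence $B_0=0$, whence $T|_{N'}=A_0$ is self-adjoint and quasi-nilpotent, hence zero---is clean. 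The only caveat, which you already note, is that the conclusion is vacuous if $X$ is finite-dimensional; in the paper's application $X=\mathbf{H}(\sdiv_{\partial B}^0,\partial B)$ is infinite-dimensional, so this is harmless.
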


%%%%%%%%%%%Begin Lidski's Theorem application

We have already established in Lemma \ref{lemma:em_nullspace} that $\boldPsi_z^{(\delta)}$ is injective, which implies that the operator has finite-dimensional nullspace. In the next two lemmas we verify the remaining hypotheses of Lidski's Theorem, but with a slight change. In particular, we will apply the result to the operator $-\boldPsi_z^{(\delta)}$.

\begin{lemma} \label{lemma:em_imag_part}

The operator $-\boldPsi_z^{(\delta)}:\mathbf{H}(\sdiv_{\partial B}^0,\partial B)\to \mathbf{H}(\sdiv_{\partial B}^0,\partial B)$ satisfies
\begin{equation*}
\Im\inner{-\boldPsi_z^{(\delta)} \mathbf{h}}{\mathbf{h}}_{\partial B} \ge 0
\end{equation*}
for all $\mathbf{h}\in\mathbf{H}(\div_{\partial B}^0,\partial B)$.

\end{lemma}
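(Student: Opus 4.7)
The plan is to unfold the definition of $\boldPsi_z^{(\delta)}$, test the weak formulation against the solution itself, and use the self-adjoint/positivity properties of $\mathcal{S}_\delta$ together with the real-valuedness of $z$ to isolate the imaginary contribution coming only from $\Im(\epsilon)$.

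Concretely, given $\mathbf{h}\in\mathbf{H}(\sdiv_{\partial B}^0,\partial B)$, I would first observe that, thanks to the semigroup property $\mathcal{S}_{\delta/2}\mathcal{S}_0 = \mathcal{S}_{\delta/2}$, we can write $\boldPsi_z^{(\delta)}\mathbf{h} = \mathcal{S}_{\delta/2}\mathbf{w}_T$, where $\mathbf{w}\in\Hcurl{B}$ is the unique solution of \eqref{defT} with boundary data $\mathcal{S}_{\delta/2}\mathbf{h}$ in place of $\mathbf{h}$, that is,
\begin{equation*}
\curl\curl\mathbf{w} - k^2\epsilon\mathbf{w} = \mathbf{0} \text{ in } B, \qquad \un\times\curl\mathbf{w} - z\mathcal{S}_\delta\mathbf{w}_T = \mathcal{S}_{\delta/2}\mathbf{h} \text{ on } \partial B.
\end{equation*}
The second step is to apply the self-adjointness identity from Proposition \ref{prop:opSdelta} to $\mathcal{S}_{\delta/2}$ in order to rewrite
\begin{equation*}
\inner{\boldPsi_z^{(\delta)}\mathbf{h}}{\mathbf{h}}_{\partial B} = \inner{\mathcal{S}_{\delta/2}\mathbf{w}_T}{\mathbf{h}}_{\partial B} = \overline{\inner{\mathcal{S}_{\delta/2}\mathbf{h}}{\mathbf{w}_T}_{\partial B}}.
\end{equation*}

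The third step is to take $\boldphi = \mathbf{w}$ in the weak formulation \eqref{var_deltastek} (with $\mathbf{f} = \mathbf{0}$, $\lambda = z$, and $\mathbf{h}$ replaced by $\mathcal{S}_{\delta/2}\mathbf{h}$), which yields
\begin{equation*}
\inner{\mathcal{S}_{\delta/2}\mathbf{h}}{\mathbf{w}_T}_{\partial B} = -\normcurl{\curl\mathbf{w}}{B}^2 \text{-like term} + k^2(\epsilon\mathbf{w},\mathbf{w})_B - z\inner{\mathcal{S}_\delta\mathbf{w}_T}{\mathbf{w}_T}_{\partial B},
\end{equation*}
where I use $\inner{\mathcal{S}_\delta\mathbf{w}_T}{\mathbf{w}_T}_{\partial B} = \norm{\mathcal{S}_{\delta/2}\mathbf{w}_T}_{\mathbf{L}_t^2(\partial B)}^2\ge 0$ from Proposition \ref{prop:opSdelta}. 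Combining with the previous step and conjugating gives
\begin{equation*}
\inner{\boldPsi_z^{(\delta)}\mathbf{h}}{\mathbf{h}}_{\partial B} = -\norm{\curl\mathbf{w}}_B^2 + k^2\int_B \overline{\epsilon}\,\abs{\mathbf{w}}^2\,dx - z\,\norm{\mathcal{S}_{\delta/2}\mathbf{w}_T}_{\mathbf{L}_t^2(\partial B)}^2.
\end{equation*}

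The fourth and final step is to take the imaginary part: both the curl term and the $\mathcal{S}_\delta$ quadratic form are real and nonnegative, and $z$ is real by construction, so all three of these contribute nothing. Hence
\begin{equation*}
\Im\inner{\boldPsi_z^{(\delta)}\mathbf{h}}{\mathbf{h}}_{\partial B} = -k^2\int_B \Im(\epsilon)\,\abs{\mathbf{w}}^2\,dx \le 0
\end{equation*}
since $\Im(\epsilon)\ge 0$ a.e. in $D$ (and vanishes outside $D$), which gives the claimed inequality after negation. I do not expect a serious obstacle here; the only moving parts are bookkeeping of conjugations in the sesquilinear duality pairing and careful use of $\mathcal{S}_{\delta/2}\mathcal{S}_0=\mathcal{S}_{\delta/2}$ and the adjointness identity \eqref{eq:Sdelta_positive}, both of which are already in hand.
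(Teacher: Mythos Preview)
Your proposal is correct and follows essentially the same approach as the paper: both arguments test the weak formulation \eqref{var_deltastek} against the solution $\mathbf{w}$, use the symmetry/positivity identity \eqref{eq:Sdelta_positive} and the real-valuedness of $z$, and then observe that the only imaginary contribution comes from $-k^2\int_B \Im(\epsilon)\abs{\mathbf{w}}^2\,dx$. The sole cosmetic difference is that the paper first reduces the claim to the operator $-\mathbf{T}_z^{(\delta)}$ (via $\inner{-\boldPsi_z^{(\delta)}\mathbf{h}}{\mathbf{h}}_{\partial B} = \inner{-\mathbf{T}_z^{(\delta)}\mathcal{S}_{\delta/2}\mathbf{h}}{\mathcal{S}_{\delta/2}\mathbf{h}}_{\partial B}$) before carrying out the same computation, whereas you work directly with $\boldPsi_z^{(\delta)}$ by writing $\boldPsi_z^{(\delta)}\mathbf{h} = \mathcal{S}_{\delta/2}\mathbf{w}_T$.
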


\begin{proof}

We first observe that, by our definition of $\boldPsi_z^{(\delta)}$ and the results of Proposition \ref{prop:opSdelta}, we have
\begin{equation*}
\Im\inner{-\boldPsi_z^{(\delta)} \mathbf{h}}{\mathbf{h}}_{\partial B} = \Im\inner{-\mathbf{T}_z^{(\delta)} \mathcal{S}_{\delta/2} \mathbf{h}}{\mathcal{S}_{\delta/2} \mathbf{h}}_{\partial B} \quad\forall\mathbf{h}\in\mathbf{H}(\sdiv_{\partial B}^0,\partial B),
\end{equation*}
and hence it suffices to establish the nonnegativity condition for the operator $-\mathbf{T}_z^{(\delta)}$. For a given $\mathbf{h}$ we let $\mathbf{w}$ denote the solution of \eqref{defT}, which implies that $\mathbf{T}_z^{(\delta)}\mathbf{h} = \mathcal{S}_0 \mathbf{w}_T$. From \eqref{var_deltastek} we see that
\begin{align*}
\inner{-\mathbf{T}_z^{(\delta)} \mathbf{h}}{\mathbf{h}}_{\partial B} &= -\inner{\mathcal{S}_0\mathbf{w}_T}{\mathbf{h}}_{\partial B} \\
 &= -\inner{\mathbf{w}_T}{\mathcal{S}_0\mathbf{h}}_{\partial B} \\
&= -\overline{\inner{\mathbf{h}}{\mathbf{w}_T}_{\partial B}} \\
&= (\curl\mathbf{w},\curl\mathbf{w})_B - k^2(\overline{\epsilon}\mathbf{w},\mathbf{w})_B + z\inner{\mathcal{S}_\delta\mathbf{w}_T}{\mathbf{w}_T}_{\partial B},
\end{align*}
where we have used the fact that $\mathcal{S}_0 \mathbf{h} = \mathbf{h}$, $z$ was chosen to be real, and $\inner{\mathcal{S}_\delta\mathbf{w}_T}{\mathbf{w}_T}_{\partial B}$ is real as a consequence of Proposition \ref{prop:opSdelta}. Considering the imaginary part yields
\begin{equation*}
\Im\inner{-\mathbf{T}_z^{(\delta)} \mathbf{h}}{\mathbf{h}}_{\partial B} = -k^2(\Im(\overline{\epsilon})\mathbf{w},\mathbf{w})_B = k^2(\Im(\epsilon)\mathbf{w},\mathbf{w})_B \ge 0
\end{equation*}
due to our assumption that $\Im(\epsilon)\ge0$, and we arrive at the desired result.
\end{proof}

We now proceed to establish that, for $\delta>1$, the operator $\boldsymbol{\Psi}_z^{(\delta)}$ is trace class, as we defined in Definition \ref{definition:trace_class}. We rely on the regularity result that we obtained in Proposition \ref{prop:ST}, but we remark that it may be possible to improve this result with a more careful analysis of the regularity properties of \eqref{defT}.

%It may be possible to improve this result with a more careful analysis of the regularity properties of \eqref{defT}, but we content ourselves with the regularity result we obtained in Proposition \ref{prop:ST}.
%Rather than concern ourselves with the particular regularity properties of solutions of \eqref{deltastek} as was done for the corresponding scalar problem in \cite{cogar2020}, we remain content with the observation made above that for any $\mathbf{h}\in\mathbf{H}(\sdiv_{\partial B}^0,\partial B)$ the solution $\mathbf{w}$ of \eqref{deltastek} satisfies $\mathcal{S}_0\mathbf{w}_T\in\mathbf{H}^1(\sdiv_{\partial B}^0,B)$.

\begin{lemma} \label{lemma:em_trace_class}

If $\delta > 1$, then $\boldsymbol{\Psi}_z^{(\delta)}$ is a trace class operator.

\end{lemma}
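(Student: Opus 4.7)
The strategy is to exploit the smoothing mapping $\boldsymbol{\Psi}_z^{(\delta)}:\mathbf{H}(\sdiv_{\partial B}^0,\partial B)\to\mathbf{H}^{1+\delta}(\sdiv_{\partial B}^0,\partial B)$ noted immediately after Proposition \ref{prop:Psi}, together with the spectral characterization of these Sobolev spaces, in order to produce finite-rank approximants whose operator-norm errors are summable precisely when $\delta>1$. The proof then reduces to a direct application of Weyl's law (Proposition \ref{prop:Weyl}).

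First I would set up the eigenbasis representation on the divergence-free subspace. Any $\mathbf{h}\in\mathbf{H}(\sdiv_{\partial B}^0,\partial B)$ expands as $\mathbf{h}=\sum_{m=1}^\infty h_m\veccurl_{\partial B}Y_m$ with equivalent squared norm $\sum_m\mu_m|h_m|^2$, and since $\boldsymbol{\Psi}_z^{(\delta)}\mathbf{h}$ again lies in $\mathbf{H}(\sdiv_{\partial B}^0,\partial B)$ it admits a similar expansion $\boldsymbol{\Psi}_z^{(\delta)}\mathbf{h}=\sum_m a_m(\mathbf{h})\veccurl_{\partial B}Y_m$, for which the cited mapping property gives
\begin{equation*}
\sum_{m=1}^\infty \mu_m^{2+\delta}|a_m(\mathbf{h})|^2 \le C^2\|\mathbf{h}\|_{\mathbf{H}(\sdiv_{\partial B}^0,\partial B)}^2.
\end{equation*}
I would then define the truncation
\begin{equation*}
T_N\mathbf{h} := \sum_{m=1}^N a_m(\mathbf{h})\veccurl_{\partial B}Y_m,
\end{equation*}
which is bounded and has rank at most $N$.

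The next step is a tail estimate using that $\{\mu_m\}$ is nondecreasing:
\begin{align*}
\|(\boldsymbol{\Psi}_z^{(\delta)}-T_N)\mathbf{h}\|_{\mathbf{H}(\sdiv_{\partial B}^0,\partial B)}^2
&\le C_1\sum_{m=N+1}^\infty \mu_m|a_m(\mathbf{h})|^2 \\
&= C_1\sum_{m=N+1}^\infty \mu_m^{-(1+\delta)}\cdot\mu_m^{2+\delta}|a_m(\mathbf{h})|^2 \\
&\le C_1\mu_{N+1}^{-(1+\delta)}\sum_{m=N+1}^\infty \mu_m^{2+\delta}|a_m(\mathbf{h})|^2 \\
&\le C_2\mu_{N+1}^{-(1+\delta)}\|\mathbf{h}\|_{\mathbf{H}(\sdiv_{\partial B}^0,\partial B)}^2,
\end{align*}
so that $\|\boldsymbol{\Psi}_z^{(\delta)}-T_N\|\le C'\mu_{N+1}^{-(1+\delta)/2}$.

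Finally I would verify the summability condition in Definition \ref{definition:trace_class}. By Proposition \ref{prop:Weyl}, $\sum_{N=1}^\infty \mu_{N+1}^{-(1+\delta)/2}$ converges if and only if $(1+\delta)/2>1$, i.e., $\delta>1$, which is precisely the hypothesis. Consequently $\boldsymbol{\Psi}_z^{(\delta)}$ is trace class. The only real subtlety is bookkeeping: confirming that $\boldsymbol{\Psi}_z^{(\delta)}\mathbf{h}$ genuinely inherits the divergence-free expansion and that the equivalent spectral norms on $\mathbf{H}(\sdiv_{\partial B}^0,\partial B)$ and $\mathbf{H}^{1+\delta}(\sdiv_{\partial B}^0,\partial B)$ may be used interchangeably, so that the interpolation-type bound $\mu_m|a_m|^2\le\mu_{N+1}^{-(1+\delta)}\mu_m^{2+\delta}|a_m|^2$ for $m\ge N+1$ is genuinely available. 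Both points follow transparently from the setup in Section \ref{sec_deltastek} and the mapping property into the smoother space, after which the estimate above and Weyl's law finish the argument.
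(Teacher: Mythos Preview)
Your proof is correct and follows essentially the same route as the paper: both construct finite-rank truncations in the $\{\veccurl_{\partial B}Y_m\}$ basis, derive the operator-norm tail bound $\|\boldsymbol{\Psi}_z^{(\delta)}-T_N\|\le C\mu_{N+1}^{-(1+\delta)/2}$, and invoke Proposition~\ref{prop:Weyl}. The only cosmetic difference is that the paper unpacks the factorization $\boldsymbol{\Psi}_z^{(\delta)}=\mathcal{S}_{\delta/2}\mathbf{T}_z^{(\delta)}\mathcal{S}_{\delta/2}$ and appeals to Proposition~\ref{prop:ST} directly, whereas you cite the already-packaged mapping $\boldsymbol{\Psi}_z^{(\delta)}:\mathbf{H}(\sdiv_{\partial B}^0,\partial B)\to\mathbf{H}^{1+\delta}(\sdiv_{\partial B}^0,\partial B)$ (which, incidentally, is stated just \emph{before} Proposition~\ref{prop:Psi}, not after).
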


\begin{proof}

For each $M\in\mathbb{N}$ we define the operator $\mathcal{I}^{(M)}:\mathbf{H}(\sdiv_{\partial B}^0,\partial B)\to \mathbf{H}(\sdiv_{\partial B}^0,\partial B)$ by
\begin{equation*}
\mathcal{I}^{(M)}\boldsymbol{\xi} := \sum_{m=1}^{M-1} \boldsymbol{\xi}_m^{(2)}\veccurl_{\partial B} Y_m,
\end{equation*}
and we note that $\mathcal{I}^{(M)}$ has rank $M-1$. It follows that the operator $\mathcal{I}^{(M)}\boldsymbol{\Psi}_z^{(\delta)}$ has rank no greater than $M$. For a given $\mathbf{h}\in\mathbf{H}(\sdiv_{\partial B}^0,\partial B)$ we denote by $\mathbf{w}$ the solution of \eqref{defT} with right-hand side $\mathcal{S}_{\delta/2} \mathbf{h}$, implying that $\boldPsi_z^{(\delta)}\mathbf{h} = \mathcal{S}_{\delta/2} \mathbf{w}_T$, and we have
\begin{align*}
\norm{\left( \boldsymbol{\Psi}_z^{(\delta)} - \mathcal{I}^{(M)}\boldsymbol{\Psi}_z^{(\delta)} \right)\mathbf{h}}_{\mathbf{H}(\div_{\partial B}^0,\partial B)} &= \norm{\left( \boldsymbol{\Psi}_z^{(\delta)} - \mathcal{I}^{(M)}\boldsymbol{\Psi}_z^{(\delta)} \right)\mathbf{h}}_{\mathbf{L}_t^2(\partial B)} \\
&= \left( \sum_{m=M}^\infty \mu_m \abs{\mu_m^{-\delta/2}(\mathbf{w}_T)_m^{(2)}}^2 \right)^{1/2} \\
&= \left( \sum_{m=M}^\infty \mu_m^{ -\left(1+\delta\right) } \mu_m^2 \abs{(\mathbf{w}_T)_m^{(2)}}^2 \right)^{1/2} \\
&\le \mu_M^{-\frac{1}{2}\left(1 + \delta\right)} \norm{\mathcal{S}_0\mathbf{w}_T}_{\mathbf{H}^1(\sdiv_{\partial B}^0,\partial B)} \\
&\le C \mu_M^{-\frac{1}{2}\left(1 + \delta\right)} \norm{\mathbf{h}}_{\mathbf{H}(\sdiv_{\partial B}^0,\partial B)},
\end{align*}
where the final estimate follows from Proposition \ref{prop:ST} and boundedness of $\mathcal{S}_\delta$. As a result we obtain
\begin{equation*}
\norm{\boldsymbol{\Psi}_z^{(\delta)} - \mathcal{I}^{(M)}\boldsymbol{\Psi}_z^{(\delta)}} \le C \mu_M^{-\frac{1}{2}\left(1 + \delta\right)} \quad\forall M\in\mathbb{N}.
\end{equation*}
By Proposition \ref{prop:Weyl} we know that $\{\mu_m^{-\beta}\}$ is summable if and only if $\beta > 1$, and it follows from Definition \ref{definition:trace_class} that $\boldsymbol{\Psi}_z^{(\delta)}$ is trace class whenever $\delta>1$.
\end{proof}

By combining the results of Lemmas \ref{lemma:em_nullspace}, \ref{lemma:em_imag_part}, and \ref{lemma:em_trace_class} and applying Lidski's Theorem to the operator $-\boldPsi_z^{(\delta)}$, we obtain the following result on the electromagnetic $\delta$-Stekloff eigenvalues as a consequence of Proposition \ref{prop:Psi}.

\begin{theorem} \label{theorem:existence}

If $\delta>1$, then there exist infinitely many electromagnetic $\delta$-Stekloff eigenvalues.

\end{theorem}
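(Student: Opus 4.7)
The plan is to invoke Lidski's Theorem (Theorem~\ref{theorem:lidski}) on the operator $-\boldPsi_z^{(\delta)}$ acting on the Hilbert space $\mathbf{H}(\sdiv_{\partial B}^0,\partial B)$, and then translate the resulting spectral information into an existence statement for $\delta$-Stekloff eigenvalues via Proposition~\ref{prop:Psi}. The three hypotheses of Lidski's Theorem have already been verified piecewise in the preceding results: Lemma~\ref{lemma:em_trace_class} establishes the trace class property under the assumption $\delta>1$, Lemma~\ref{lemma:em_nullspace} shows that $\boldPsi_z^{(\delta)}$ is injective and therefore has a (trivially) finite-dimensional nullspace, and Lemma~\ref{lemma:em_imag_part} provides the nonnegativity of $\Im\inner{-\boldPsi_z^{(\delta)}\mathbf{h}}{\mathbf{h}}_{\partial B}$. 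Each of these properties is inherited by $-\boldPsi_z^{(\delta)}$ from $\boldPsi_z^{(\delta)}$, the first two directly and the third by construction of the sign.

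With these hypotheses in place, Lidski's Theorem delivers infinitely many eigenvalues of $-\boldPsi_z^{(\delta)}$, and equivalently of $\boldPsi_z^{(\delta)}$; every such eigenvalue is necessarily nonzero by injectivity, and compactness forces $0$ as the only possible accumulation point, so the collection of distinct nonzero eigenvalues $\nu$ is in fact infinite. I would then specialize Proposition~\ref{prop:Psi} to $\delta_1 = \delta_2 = \delta/2$, which shows that each such $\nu$ produces an electromagnetic $\delta$-Stekloff eigenvalue $\lambda = z + \nu^{-1}$. Since the map $\nu\mapsto z+\nu^{-1}$ is injective on $\mathbb{C}\setminus\{0\}$, distinct $\nu$'s give distinct values of $\lambda$, yielding the infinite family of $\delta$-Stekloff eigenvalues claimed by the theorem.

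Since the substantive content has been absorbed into the preceding three lemmas together with Proposition~\ref{prop:Psi}, there is no genuine obstacle remaining at this step; the argument is simply a bookkeeping exercise matching hypotheses to conclusions. The only minor points worth stating for clarity are that the parameter $z\in\mathbb{R}$ is chosen outside the discrete set of $\delta$-Stekloff eigenvalues (possible by Theorem~\ref{theorem:Fredholm}), which makes $\boldPsi_z^{(\delta)}$ well-defined in the first place, and that the spectrum of $-\boldPsi_z^{(\delta)}$ is simply the negation of the spectrum of $\boldPsi_z^{(\delta)}$, so counting eigenvalues of one is the same as counting eigenvalues of the other.
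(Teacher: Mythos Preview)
Your proposal is correct and follows exactly the approach the paper takes: the paper simply states that combining Lemmas~\ref{lemma:em_nullspace}, \ref{lemma:em_imag_part}, and \ref{lemma:em_trace_class}, applying Lidski's Theorem to $-\boldPsi_z^{(\delta)}$, and invoking Proposition~\ref{prop:Psi} yields the result. Your write-up is in fact slightly more detailed than the paper's, since you spell out why the eigenvalues of $\boldPsi_z^{(\delta)}$ are nonzero and why distinct eigenvalues of $\boldPsi_z^{(\delta)}$ give distinct $\delta$-Stekloff eigenvalues.
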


%We remark that Lidski's Theorem also implies that, if $\{\mathbf{w}_j\}$ denotes the corresponding sequence of $\delta$-Stekloff eigenfunctions, then the sequence $\{\mathcal{S}_0 \mathbf{w}_{j,T}\}$ is a basis for $\mathbf{H}(\sdiv_{\partial B}^0,\partial B)$. 

%We remark that it is likely possible to leverage regularity results for Maxwell's equations in order to improve the lower bound on $\delta$ that we obtained in Theorem \ref{theorem:existence}, as was done for the scalar problem in \cite{cogar2020}.

%\subsection{Detection of $\delta$-Stekloff eigenvalues from far field data} \label{subsec_LSM}
%
%This section contains results about detectability of eigenvalues using the linear sampling method.

\section{Stability of $\delta$-Stekloff eigenvalues}

\label{sec_pert}

With our application of nondestructive testing of materials in mind, we devote this section to the investigation of stability of the eigenvalues. We first consider stability with respect to changes in $\epsilon$, which ensures that small changes in a material do not cause large deviations of the eigenvalues; while shifts in the eigenvalues are desired in order to detect such changes in the material, it is preferable that the eigenvalues for the perturbed medium remain near those for the unperturbed medium in order to facilitate a careful analysis of the perturbation. Second, we will consider stability with respect to $\delta$. Since we have complete control over this smoothing parameter, our primary interest here is to show that the electromagnetic $\delta$-Stekloff eigenvalues converge to the standard electromagnetic Stekloff eigenvalues as $\delta\to0^+$. In both cases we will conclude stability of the eigenvalues from a convergence result for the corresponding solution operators.

\subsection{Stability with respect to $\epsilon$} \label{subsec_pert_epsilon}

We show that electromagnetic $\delta$-Stekloff eigenvalues are stable with respect to small changes in the relative permittivity $\epsilon$. We first remark that, in order to apply regularity results for Maxwell's equations, we will consider the Sobolev space $\mathbf{H}^s(B)$ for values of $s$ in the interval $(0,\frac{1}{2})$, and we refer to \cite{bonito_guermond_luddens} for the definition of such spaces. In this section we write the solution operator $\boldPsi_z^{(\delta)}$ as $\boldPsi_{z,\epsilon}^{(\delta)}$ in order to make explicit its dependence on the relative permittivity $\epsilon$, and we establish continuity of the mapping $\epsilon\mapsto\boldPsi_{z,\epsilon}^{(\delta)}$ through an operator factorization of $\boldPsi_{z,\epsilon}^{(\delta)}$ that we now derive. We recall that, for a given $\mathbf{h}\in\mathbf{H}(\sdiv_{\partial B}^0,\partial B)$, we have defined $\boldPsi_{z,\epsilon}^{(\delta)} \mathbf{h} = \mathcal{S}_{\delta/2} \mathbf{w}_T$, where $\mathbf{w}\in\Hcurl{B}$ is the unique solution of
\begin{subequations} \label{W}
\begin{align}
\curl\curl\mathbf{w} - k^2 \epsilon \mathbf{w} &= \mathbf{0} \text{ in } B, \label{W1} \\
\un\times\curl\mathbf{w} - z\mathcal{S}_\delta \mathbf{w}_T &= \mathcal{S}_{\delta/2} \mathbf{h} \text{ on } \partial B. \label{W2}
\end{align}
\end{subequations}
If $\mathbf{w}_j$ satisfies \eqref{W} for $\epsilon = \epsilon_j$, $j=0,1$, then we see that $\mathbf{v} := \mathbf{w}_1 - \mathbf{w}_0 \in \Hcurl{B}$ satisfies
\begin{subequations} \label{V}
\begin{align}
\curl\curl\mathbf{v} - k^2 \epsilon \mathbf{v} &= \mathbf{f} \text{ in } B, \label{V1} \\
\un\times\curl\mathbf{v} - z\mathcal{S}_\delta \mathbf{v}_T &= \mathbf{0} \text{ on } \partial B, \label{V2}
\end{align}
\end{subequations}
with $\epsilon = \epsilon_1$ and $\mathbf{f} = k^2(\epsilon_1-\epsilon_0)\mathbf{w}_0$. \par

This observation motivates us to define the following operators. First, we define $\mathbf{W}_\epsilon:\mathbf{H}(\sdiv_{\partial B}^0,\partial B)\to\Hcurl{B}$ such that $\mathbf{W}_\epsilon\mathbf{h} := \mathbf{w}$, where $\mathbf{w}$ satisfies \eqref{W}. Given a reference permittivity $\epsilon_0$ that we view as fixed and a perturbed permittivity $\epsilon_1$, we define the multiplication operator $\mathbf{M}_{\epsilon_1,\epsilon_0}:\mathbf{L}^2(B)\to\mathbf{L}^2(B)$ by $\mathbf{M}_{\epsilon_1,\epsilon_0} \mathbf{f} := (\epsilon_1-\epsilon_0)\mathbf{f}$. Finally, we define the operator $\mathbf{V}_\epsilon:\mathbf{L}^2(B)\to\Hcurl{B}$ such that $\mathbf{V}_\epsilon\mathbf{f} := \mathbf{v}$, where $\mathbf{v}$ satisfies \eqref{V}. We remark that the solutions operators $\mathbf{W}_\epsilon$ and $\mathbf{V}_\epsilon$ are well-defined as a result of Theorem \ref{theorem:Fredholm} and the assumption that $z$ is not an electromagnetic $\delta$-Stekloff eigenvalue. Since $\mathbf{w}_j = \mathbf{W}_{\epsilon_j} \mathbf{h}$, $j = 0,1$, it follows that
\begin{equation*}
(\mathbf{W}_{\epsilon_1} - \mathbf{W}_{\epsilon_0}) \mathbf{h} = \mathbf{V}_{\epsilon_1} \biggr[ k^2(\epsilon_1-\epsilon_0)\mathbf{w}_0 \biggr] = k^2\mathbf{V}_{\epsilon_1} \mathbf{M}_{\epsilon_1,\epsilon_0} \mathbf{W}_{\epsilon_0} \mathbf{h}.
\end{equation*}
Since this equation holds for all choices of $\mathbf{h}$, we arrive at the factorization
\begin{equation*}
\mathbf{W}_{\epsilon_1} - \mathbf{W}_{\epsilon_0} = k^2\mathbf{V}_{\epsilon_1} \mathbf{M}_{\epsilon_1,\epsilon_0} \mathbf{W}_{\epsilon_0}.
\end{equation*}
From the relationship $\boldPsi_{z,\epsilon}^{(\delta)} = \mathcal{S}_{\delta/2} \boldsymbol{\gamma}_T \mathbf{W}_\epsilon$, where $\boldsymbol{\gamma}_T:\Hcurl{B}\to\mathbf{H}^{-1/2}(\scurl_{\partial B},\partial B)$ is the tangential trace operator $\boldsymbol{\gamma}_T\mathbf{u} := \mathbf{u}_T$, we immediately obtain the estimate
\begin{equation} \label{psi_est1}
\norm{\boldPsi_{z,\epsilon_1}^{(\delta)} - \boldPsi_{z,\epsilon_0}^{(\delta)}} \le C\norm{\mathbf{V}_{\epsilon_1}} \norm{\mathbf{M}_{\epsilon_1,\epsilon_0} \mathbf{W}_{\epsilon_0}},
\end{equation}
in which the constant $C>0$ is independent of both $\epsilon_1$ and $\epsilon_0$. We provide a further estimate of the right-hand side of \eqref{psi_est1} in the following lemma.

\begin{lemma} \label{lemma:MW_est}

The operator $\mathbf{M}_{\epsilon_1,\epsilon_0} \mathbf{W}_{\epsilon_0}:\mathbf{H}(\sdiv_{\partial B}^0,\partial B)\to\mathbf{L}^2(B)$ satisfies the norm estimate
\begin{equation} \label{MW_est}
\norm{\mathbf{M}_{\epsilon_1,\epsilon_0} \mathbf{W}_{\epsilon_0}} \le C_{s,\epsilon_0} \norm{\epsilon_1-\epsilon_0}_{L^{3/s}(B)},
\end{equation}
where the constants $s\in(0,\frac{1}{2})$ and $C_{s,\epsilon_0}>0$ are independent of $\epsilon_1$.

\end{lemma}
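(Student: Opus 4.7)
The plan is to combine Hölder's inequality, Sobolev embedding, and a fractional regularity estimate for the solution of (\ref{W}). For $\mathbf{h}\in\mathbf{H}(\sdiv_{\partial B}^0,\partial B)$ let $\mathbf{w}:=\mathbf{W}_{\epsilon_0}\mathbf{h}$, and for a parameter $s\in(0,\tfrac{1}{2})$ set $p:=6/(3-2s)\in(2,3)$, so that $\tfrac{s}{3}+\tfrac{1}{p}=\tfrac{1}{2}$. Hölder's inequality then yields
\[
\|(\epsilon_1-\epsilon_0)\mathbf{w}\|_B \le \|\epsilon_1-\epsilon_0\|_{L^{3/s}(B)}\,\|\mathbf{w}\|_{\mathbf{L}^p(B)},
\]
and since $p$ is precisely the critical Sobolev exponent in three dimensions associated with $s$, the continuous embedding $\mathbf{H}^s(B)\hookrightarrow\mathbf{L}^p(B)$ reduces (\ref{MW_est}) to an estimate of the form $\|\mathbf{w}\|_{\mathbf{H}^s(B)}\le C_{s,\epsilon_0}\|\mathbf{h}\|_{\mathbf{H}(\sdiv_{\partial B}^0,\partial B)}$ for some $s\in(0,\tfrac{1}{2})$, with a constant independent of both $\mathbf{h}$ and $\epsilon_1$.

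For this regularity estimate I would assemble the information contained in (\ref{W}). Taking the divergence of (\ref{W1}) shows $\div(\epsilon_0\mathbf{w})=0$ in $B$; (\ref{W1}) itself places $\curl\curl\mathbf{w}=k^2\epsilon_0\mathbf{w}$ in $\mathbf{L}^2(B)$; and (\ref{W2}), together with the mapping properties of $\mathcal{S}_\delta$ and $\mathcal{S}_{\delta/2}$ from Proposition \ref{prop:opSdelta}, bounds $\un\times\curl\mathbf{w}$ in $\mathbf{L}_t^2(\partial B)$ in terms of $\|\mathbf{h}\|_{\mathbf{H}(\sdiv_{\partial B}^0,\partial B)}$ and $\|\mathbf{w}_T\|_{\mathbf{H}^{-1/2}(\scurl_{\partial B},\partial B)}$. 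Under the $W_\Sigma^{1,\infty}$ hypothesis on $\epsilon_0$ and the smoothness of $\partial B$, the regularity theory for Maxwell systems with piecewise-smooth coefficients (cf.\ \cite{bonito_guermond_luddens}) then yields $\mathbf{w}\in\mathbf{H}^s(B)$ for some $s\in(0,\tfrac{1}{2})$ together with a quantitative bound of the form
\[
\|\mathbf{w}\|_{\mathbf{H}^s(B)} \le C_{s,\epsilon_0}\bigl(\normcurl{\mathbf{w}}{B} + \|\un\times\curl\mathbf{w}\|_{\mathbf{L}_t^2(\partial B)}\bigr).
\]
Combining this with the well-posedness estimate for (\ref{W}) supplied by Theorem \ref{theorem:Fredholm} and with the continuity of the tangential trace on $\Hcurl{B}$ closes the argument.

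The main obstacle is the second step: isolating a regularity theorem with the appropriate hypotheses and with a constant depending on $\epsilon_0$ only through $\epsilon_0$, $\|\nabla\epsilon_0|_{\Omega_j}\|_{\mathbf{L}^\infty(\Omega_j)}$, and the geometry of $\Sigma$ and $\partial B$. Classical elliptic regularity is unavailable because of the jumps of $\epsilon_0$ across $\Sigma$, and the boundary data on $\partial B$ sits only in $\mathbf{L}_t^2(\partial B)$, which on its own would give $\mathbf{H}^{1/2}$-regularity; one must therefore appeal to the piecewise-smooth framework of \cite{bonito_guermond_luddens} and keep track that the constant stays independent of $\epsilon_1$. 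Any $s\in(0,\tfrac{1}{2})$ below the regularity threshold of that framework is admissible, which is all the reduction in the first paragraph requires.
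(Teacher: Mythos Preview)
Your approach---H\"older's inequality with exponents $3/s$ and $6/(3-2s)$, Sobolev embedding $\mathbf{H}^s(B)\hookrightarrow\mathbf{L}^{6/(3-2s)}(B)$, and fractional regularity of $\mathbf{w}=\mathbf{W}_{\epsilon_0}\mathbf{h}$---is exactly the paper's. The one place where your argument is looser is the regularity step: the bound you invoke, with $\|\un\times\curl\mathbf{w}\|_{\mathbf{L}_t^2(\partial B)}$ on the right, is not the form in which the results of \cite{bonito_guermond_luddens} or \cite{ciarlet} are stated, since those theorems require a boundary condition on $\mathbf{w}$ itself (either $\un\times\mathbf{w}$ or $\un\cdot(\epsilon_0\mathbf{w})$), not on $\curl\mathbf{w}$. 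The paper fills this in by noting that both terms on the right of \eqref{W2} lie in $\mathbf{H}(\sdiv_{\partial B}^0,\partial B)$, so $\sdiv_{\partial B}(\un\times\curl\mathbf{w})=0$ and hence $\un\cdot(\epsilon_0\mathbf{w})=k^{-2}\,\un\cdot\curl\curl\mathbf{w}=0$ on $\partial B$; together with $\div(\epsilon_0\mathbf{w})=0$ this places $\mathbf{w}$ in the space $\mathbf{Y}^0(B)=\{\mathbf{u}\in\Hcurl{B}:\div(\epsilon_0\mathbf{u})=0,\ \un\cdot(\epsilon_0\mathbf{u})|_{\partial B}=0\}$, whose continuous embedding into $\mathbf{H}^s(B)$ for some $s\in(0,\tfrac12)$ is precisely what the cited references supply. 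Once you extract this normal boundary condition your argument coincides with the paper's.
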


\begin{proof}

For a given $\mathbf{h}\in\mathbf{H}(\sdiv_{\partial B}^0,\partial B)$ we first remark that $\mathbf{w}_0 := \mathbf{W}_{\epsilon_0} \mathbf{h}$ lies in the space
\begin{equation*}
\mathbf{Y}^0(B) := \{\mathbf{u}\in\mathbf{L}^2(B) \mid \curl\mathbf{u}\in\mathbf{L}^2(B), \; \div(\epsilon_0\mathbf{u}) = 0, \; \un\cdot(\epsilon_0\mathbf{u}) = 0\},
\end{equation*}
equipped with the standard norm on $\Hcurl{B}$, where the last condition in the space follows from the observation that
\begin{equation*}
0 = \sdiv_{\partial B}(\un\times\curl\mathbf{w}_0) = -\un\cdot\curl\curl\mathbf{w}_0 = -k^2\un\cdot(\epsilon_0\mathbf{w}_0).
\end{equation*}
Since $\epsilon|_D\in W_\Sigma^{1,\infty}(D)$ and $\epsilon=1$ in $B\setminus\overline{D}$, we may apply the regularity results in \cite[Proposition 6.5]{ciarlet} (see also \cite{bonito_guermond_luddens}) to assert the existence of $\tau_{\epsilon_0}\in(0,\frac{1}{2})$ such that the space $\mathbf{Y}^0(B)$ is continuously embedded into $\mathbf{H}^s(B)$ for all $s\in[0,\tau_{\epsilon_0})$, and we fix a positive such value of $s$ for the remainder of our discussion. We conclude that $\mathbf{w}_0\in\mathbf{H}^s(B)$ satisfies the estimate
\begin{equation*}
\norm{\mathbf{w}_0}_{\mathbf{H}^s(B)} \le C_s\norm{\mathbf{w}_0}_{\Hcurl{B}} \le C_{s,\epsilon_0} \norm{\mathbf{h}}_{\mathbf{H}(\sdiv_{\partial B}^0,\partial B)}.
\end{equation*}
Furthermore, the Sobolev embedding theorem (cf. \cite{adams}) implies that $\mathbf{w}_0\in\mathbf{L}^{3/(3-2s)}(B)$ with continuous embedding, and we use this $L^p$-regularity to establish \eqref{MW_est}. We see from H{\"o}lder's inequality that
\begin{align*}
\norm{\mathbf{M}_{\epsilon_1,\epsilon_0} \mathbf{W}_{\epsilon_0} \mathbf{h}}_{\mathbf{L}^2(B)} &= \left( \int_B \abs{\epsilon_1 - \epsilon_0}^2 \abs{\mathbf{w}_0}^2 \,dx \right)^{1/2} \\
&\le \norm{\epsilon_1 - \epsilon_0}_{L^{3/s}(B)} \norm{\mathbf{w}_0}_{\mathbf{L}^{3/(3-2s)}(B)} \\
&\le C_s \norm{\epsilon_1 - \epsilon_0}_{L^{3/s}(B)} \norm{\mathbf{w}_0}_{\mathbf{H}^s(B)} \\
&\le C_{s,\epsilon_0} \norm{\epsilon_1 - \epsilon_0}_{L^{3/s}(B)} \norm{\mathbf{h}}_{\mathbf{H}(\sdiv_{\partial B}^0,\partial B)},
\end{align*}
and the definition of the operator norm now results in \eqref{MW_est}.
\end{proof}

Examining the initial estimate \eqref{psi_est1}, we see that it suffices to show that $\norm{\mathbf{V}_{\epsilon_1}}$ is small whenever $\epsilon_1$ is in a small neighborhood of $\epsilon_0$. Ideally, we would be able to measure this neighborhood in the same norm that appeared in the result of Lemma \ref{lemma:MW_est}; however, since the domain of $\mathbf{V}_{\epsilon_1}$ is merely $\mathbf{L}^2(B)$, we are unable to leverage the same regularity results from \cite{bonito_guermond_luddens,ciarlet} that gave rise to the $L^{3/s}(B)$-norm. Moreover, defining the domain of this operator to be a smaller space would require the same redefinition for the codomain of the operator $\mathbf{M}_{\epsilon_1,\epsilon_0}$, and the result of Lemma \ref{lemma:MW_est} would be invalidated. Thus, in the following lemma we remain content to measure the perturbation $\epsilon_1 - \epsilon_0$ in $L^\infty(B)$. Unlike the problem considered in \cite{cogar2020} for the Helmholtz equation, we remark that this present difficulty is caused by the strong dependence of regularity results for Maxwell's equations on the coefficients.

\begin{lemma} \label{lemma:V_bound}

The operator norm $\norm{\mathbf{V}_{\epsilon_1}}$ is uniformly bounded whenever $\norm{\epsilon_1-\epsilon_0}_{L^\infty(B)}$ is sufficiently small.

\end{lemma}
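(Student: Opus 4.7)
The plan is to set up a Neumann series identity that expresses $\mathbf{V}_{\epsilon_1}$ as a perturbation of $\mathbf{V}_{\epsilon_0}$, and then read off both well-definedness and a uniform operator norm bound from it. Given $\mathbf{f}\in\mathbf{L}^2(B)$, any $\mathbf{v}_1\in\Hcurl{B}$ solving \eqref{V} with $\epsilon=\epsilon_1$ also solves the same boundary value problem with $\epsilon$ replaced by $\epsilon_0$ and right-hand side $\mathbf{f}+k^2(\epsilon_1-\epsilon_0)\mathbf{v}_1\in\mathbf{L}^2(B)$, since the boundary condition is independent of $\epsilon$. From the definition of $\mathbf{V}_{\epsilon_0}$ and $\mathbf{M}_{\epsilon_1,\epsilon_0}$ this yields the operator identity
\begin{equation*}
\bigl(I - k^2 \mathbf{V}_{\epsilon_0}\mathbf{M}_{\epsilon_1,\epsilon_0}\bigr)\mathbf{v}_1 = \mathbf{V}_{\epsilon_0}\mathbf{f}
\end{equation*}
posed in $\Hcurl{B}$.

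Next, I would bound $\mathbf{V}_{\epsilon_0}\mathbf{M}_{\epsilon_1,\epsilon_0}$ as an operator on $\Hcurl{B}$. Using the continuous embedding $\Hcurl{B}\hookrightarrow\mathbf{L}^2(B)$, the multiplication operator $\mathbf{M}_{\epsilon_1,\epsilon_0}$ maps $\Hcurl{B}$ into $\mathbf{L}^2(B)$ with norm at most $\norm{\epsilon_1-\epsilon_0}_{L^\infty(B)}$, and $\mathbf{V}_{\epsilon_0}:\mathbf{L}^2(B)\to\Hcurl{B}$ is bounded by Theorem \ref{theorem:Fredholm} (since $z$ is not an electromagnetic $\delta$-Stekloff eigenvalue for $\epsilon_0$). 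Consequently
\begin{equation*}
\norm{\mathbf{V}_{\epsilon_0}\mathbf{M}_{\epsilon_1,\epsilon_0}}_{\Hcurl{B}\to\Hcurl{B}} \le \norm{\mathbf{V}_{\epsilon_0}}\,\norm{\epsilon_1-\epsilon_0}_{L^\infty(B)}.
\end{equation*}
Choosing $\norm{\epsilon_1-\epsilon_0}_{L^\infty(B)}$ small enough that the right-hand side times $k^2$ is below $\tfrac{1}{2}$, the Neumann series shows $I-k^2\mathbf{V}_{\epsilon_0}\mathbf{M}_{\epsilon_1,\epsilon_0}$ is invertible on $\Hcurl{B}$ with inverse of norm at most $2$.

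To conclude, I would use this invertibility twice. First, applied with $\mathbf{f}=\mathbf{0}$, it forces $\mathbf{v}_1=\mathbf{0}$ for any solution of the homogeneous version of \eqref{V} with $\epsilon=\epsilon_1$, hence $z$ is not an electromagnetic $\delta$-Stekloff eigenvalue for $\epsilon_1$ and $\mathbf{V}_{\epsilon_1}$ is well-defined by Theorem \ref{theorem:Fredholm}. Second, for general $\mathbf{f}$ it gives $\mathbf{V}_{\epsilon_1}\mathbf{f}=\bigl(I-k^2\mathbf{V}_{\epsilon_0}\mathbf{M}_{\epsilon_1,\epsilon_0}\bigr)^{-1}\mathbf{V}_{\epsilon_0}\mathbf{f}$, so $\norm{\mathbf{V}_{\epsilon_1}}\le 2\norm{\mathbf{V}_{\epsilon_0}}$ uniformly in $\epsilon_1$ in the prescribed $L^\infty$ neighborhood of $\epsilon_0$. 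There is no real obstacle in the argument itself; the one point requiring care is that the Neumann series has to be run in a space (here $\Hcurl{B}$, via the embedding into $\mathbf{L}^2(B)$) on which both $\mathbf{V}_{\epsilon_0}$ and $\mathbf{M}_{\epsilon_1,\epsilon_0}$ act compatibly, which is why it is natural to measure the perturbation only in $L^\infty(B)$ rather than in the weaker $L^{3/s}(B)$-norm that appeared in Lemma \ref{lemma:MW_est}.
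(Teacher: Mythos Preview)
Your proof is correct and follows essentially the same Neumann series strategy as the paper, the only difference being that you invert $I - k^2\mathbf{V}_{\epsilon_0}\mathbf{M}_{\epsilon_1,\epsilon_0}$ on $\Hcurl{B}$ whereas the paper inverts $I - k^2\mathbf{M}_{\epsilon_1,\epsilon_0}\mathbf{V}_{\epsilon_0}$ on $\mathbf{L}^2(B)$ (the two are related by the standard identity $A(I-BA)^{-1}=(I-AB)^{-1}A$). A small bonus of your formulation is that it explicitly establishes well-definedness of $\mathbf{V}_{\epsilon_1}$ (i.e., that $z$ is not an electromagnetic $\delta$-Stekloff eigenvalue for $\epsilon_1$), a point the paper's proof implicitly assumes.
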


\begin{proof}

We derive a factorization of $\mathbf{V}_{\epsilon_1}$ that is valid when $\norm{\epsilon_1 - \epsilon_0}_{L^\infty(B)}$ is small. For a given $\mathbf{f}\in\mathbf{L}^2(B)$ we consider $\mathbf{v}_j := \mathbf{V}_{\epsilon_j} \mathbf{f}$, $j = 0,1$, and we see that $\mathbf{v}_1 - \mathbf{v}_0$ satisfies
\begin{align*}
\curl\curl(\mathbf{v}_1 - \mathbf{v}_0) - k^2 \epsilon_1 (\mathbf{v}_1 - \mathbf{v}_0) &= k^2(\epsilon_1-\epsilon_0)\mathbf{v}_0 \text{ in } B, \\
\un\times\curl(\mathbf{v}_1 - \mathbf{v}_0) - z\mathcal{S}_\delta (\mathbf{v}_{1,T} - \mathbf{v}_{0,T}) &= \mathbf{0} \text{ on } \partial B.
\end{align*}
It follows that $\mathbf{v}_1 - \mathbf{v}_0 = \mathbf{V}_{\epsilon_1}\biggr[ k^2(\epsilon_1 - \epsilon_0) \mathbf{v}_0 \biggr]$, and we arrive at the factorization
\begin{equation*}
\mathbf{V}_{\epsilon_1} - \mathbf{V}_{\epsilon_0} = k^2 \mathbf{V}_{\epsilon_1} \mathbf{M}_{\epsilon_1,\epsilon_0} \mathbf{V}_{\epsilon_0},
\end{equation*}
which may be written as
\begin{equation*}
\mathbf{V}_{\epsilon_1} \biggr( I - k^2\mathbf{M}_{\epsilon_1,\epsilon_0} \mathbf{V}_{\epsilon_0} \biggr) = \mathbf{V}_{\epsilon_0}.
\end{equation*}
From the observation that for all $\mathbf{f}\in\mathbf{L}^2(B)$ we have
\begin{align*}
\norm{\mathbf{M}_{\epsilon_1,\epsilon_0} \mathbf{V}_{\epsilon_0} \mathbf{f}}_{\mathbf{L}^2(B)} &= \left( \int_B \abs{\epsilon_1-\epsilon_0}^2 \abs{\mathbf{v}_0}^2 \,dx \right)^{1/2} \\
&\le \norm{\epsilon_1-\epsilon_0}_{L^\infty(B)} \norm{\mathbf{v}_0}_{\mathbf{L}^2(B)} \\
&\le \norm{\mathbf{V}_{\epsilon_0}} \norm{\epsilon_1-\epsilon_0}_{L^\infty(B)} \norm{\mathbf{f}}_{\mathbf{L}^2(B)},
\end{align*}
we have the estimate $\norm{\mathbf{M}_{\epsilon_1,\epsilon_0} \mathbf{V}_{\epsilon_0}} \le \norm{\mathbf{V}_{\epsilon_0}} \norm{\epsilon_1-\epsilon_0}_{L^\infty(B)}$, and it follows that the operator $I-k^2\mathbf{M}_{\epsilon_1,\epsilon_0}\mathbf{V}_{\epsilon_0}:\mathbf{L}^2(B)\to\mathbf{L}^2(B)$ is invertible whenever $\norm{\epsilon_1-\epsilon_0}_{L^\infty(B)} < k^{-2}\norm{\mathbf{V}_{\epsilon_0}}^{-1}$. Thus, in this case we have
\begin{equation*}
\mathbf{V}_{\epsilon_1} = \mathbf{V}_{\epsilon_0} \biggr( I - k^2\mathbf{M}_{\epsilon_1,\epsilon_0} k^{-2}\mathbf{V}_{\epsilon_0} \biggr)^{-1},
\end{equation*}
and from a Neumann series expansion we obtain
\begin{equation*}
\norm{\mathbf{V}_{\epsilon_1}} \le \frac{\norm{\mathbf{V}_{\epsilon_0}}}{1 - k^2 \norm{\mathbf{V}_{\epsilon_0}}\norm{\epsilon_1 - \epsilon_0}_{L^\infty(B)}}.
\end{equation*}
This inequality allows us to conclude that $\norm{\mathbf{V}_{\epsilon_1}}$ is uniformly bounded whenever $\norm{\epsilon_1-\epsilon_0}_{L^\infty(B)} < k^{-2}\norm{\mathbf{V}_{\epsilon_0}}^{-1}$.
\end{proof}

Combining \eqref{psi_est1} with the results of Lemmas \ref{lemma:MW_est}--\ref{lemma:V_bound} yields our main result, which we state in the following theorem.

\begin{theorem} \label{theorem:psi_est}

If $\epsilon_0$ is fixed and $\norm{\epsilon_1 - \epsilon_0}_{L^\infty(B)}$ is sufficiently small, then there exist constants $s\in(0,\frac{1}{2})$ and $C_{s,\epsilon_0}>0$ independent of $\epsilon_1$ for which
\begin{equation} \label{psi_est2}
\norm{\boldPsi_{z,\epsilon_1}^{(\delta)} - \boldPsi_{z,\epsilon_0}^{(\delta)}} \le C_{s,\epsilon_0} \norm{\epsilon_1-\epsilon_0}_{L^{3/s}(B)}.
\end{equation}

\end{theorem}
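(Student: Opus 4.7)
The proof plan is essentially to combine the three ingredients already prepared, namely the factorization-based estimate \eqref{psi_est1}, Lemma \ref{lemma:MW_est}, and Lemma \ref{lemma:V_bound}. The identity $\boldPsi_{z,\epsilon}^{(\delta)} = \mathcal{S}_{\delta/2}\boldsymbol{\gamma}_T\mathbf{W}_\epsilon$ together with the operator factorization $\mathbf{W}_{\epsilon_1} - \mathbf{W}_{\epsilon_0} = k^2 \mathbf{V}_{\epsilon_1} \mathbf{M}_{\epsilon_1,\epsilon_0} \mathbf{W}_{\epsilon_0}$ already yields \eqref{psi_est1}, so the only task is to bound the two factors on the right-hand side in the regime where $\|\epsilon_1 - \epsilon_0\|_{L^\infty(B)}$ is small.

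First, I would fix a threshold $\eta_0 := k^{-2}\|\mathbf{V}_{\epsilon_0}\|^{-1}$ as in Lemma \ref{lemma:V_bound} and restrict attention to those $\epsilon_1$ with $\|\epsilon_1 - \epsilon_0\|_{L^\infty(B)} < \eta_0/2$, say. In this range, Lemma \ref{lemma:V_bound} supplies a uniform bound $\|\mathbf{V}_{\epsilon_1}\| \le M_{\epsilon_0}$ with $M_{\epsilon_0}$ depending only on $\epsilon_0$, $k$, and the fixed data. Second, Lemma \ref{lemma:MW_est} provides a constant $s\in(0,\frac{1}{2})$ and $C_{s,\epsilon_0}'>0$, both independent of $\epsilon_1$, such that $\|\mathbf{M}_{\epsilon_1,\epsilon_0}\mathbf{W}_{\epsilon_0}\| \le C_{s,\epsilon_0}'\|\epsilon_1 - \epsilon_0\|_{L^{3/s}(B)}$.

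Inserting these two bounds into \eqref{psi_est1} gives
\begin{equation*}
\|\boldPsi_{z,\epsilon_1}^{(\delta)} - \boldPsi_{z,\epsilon_0}^{(\delta)}\| \le C\, M_{\epsilon_0}\, C_{s,\epsilon_0}' \,\|\epsilon_1 - \epsilon_0\|_{L^{3/s}(B)},
\end{equation*}
and absorbing the constants into a single $C_{s,\epsilon_0}>0$ (independent of $\epsilon_1$) yields \eqref{psi_est2}.

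Since every nontrivial step has been handled by the preceding lemmas, I do not anticipate a genuine obstacle; the only point requiring a little care is to state clearly that the smallness assumption $\|\epsilon_1-\epsilon_0\|_{L^\infty(B)}<\eta_0$ is used precisely to guarantee a uniform upper bound on $\|\mathbf{V}_{\epsilon_1}\|$, and that $s$ is chosen once and for all from the regularity exponent $\tau_{\epsilon_0}$ associated with the fixed reference coefficient $\epsilon_0$, so that both $s$ and $C_{s,\epsilon_0}$ are independent of the perturbation $\epsilon_1$.
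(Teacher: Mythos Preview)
Your proposal is correct and mirrors the paper's own argument exactly: the paper simply states that combining the factorization estimate \eqref{psi_est1} with Lemmas~\ref{lemma:MW_est} and~\ref{lemma:V_bound} yields \eqref{psi_est2}. Your added remarks about fixing the smallness threshold $\eta_0$ and choosing $s$ from the regularity exponent $\tau_{\epsilon_0}$ are appropriate clarifications of points the paper leaves implicit.
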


\begin{remark}

We remark that, assuming a suitably small magnitude of the perturbation, i.e. small $\norm{\epsilon_1-\epsilon_0}_{L^\infty(B)}$, Theorem \ref{theorem:psi_est} implies that the difference in the corresponding solution operators is stable with respect to the measure of the perturbed region in $B$ through the norm $\norm{\epsilon_1-\epsilon_0}_{L^{3/s}(B)}$. In particular, if $\Omega_{\epsilon_1,\epsilon_0} = \supp(\epsilon_1-\epsilon_0)$, then we have
\begin{equation*}
\norm{\epsilon_1-\epsilon_0}_{L^{3/s}(B)} \le \abs{\Omega_{\epsilon_1,\epsilon_0}}^{s/3} \norm{\epsilon_1-\epsilon_0}_{L^\infty(B)}.
\end{equation*}
In the special case that $\Omega_{\epsilon_1,\epsilon_0}$ is a smooth deformation of a ball of radius $r(\epsilon_1)$ dependent upon $\epsilon_1$, the estimate \eqref{psi_est2} implies that
\begin{equation*}
\norm{\boldPsi_{z,\epsilon_1}^{(\delta)} - \boldPsi_{z,\epsilon_0}^{(\delta)}} \le C_{s,\epsilon_0} r(\epsilon_1)^s \norm{\epsilon_1-\epsilon_0}_{L^\infty(B)}.
\end{equation*}
We note that the convergence rate with respect to the measure of the perturbed region is dependent upon $s$, which is, roughly speaking, representative of the regularity of the interface $\Sigma$ associated with the space $W_\Sigma^{1,\infty}(D)$ in which $\epsilon_0|_D$ lies. The same results applied to a permittivity $\epsilon_0$ which is in $C^1(B)$ would yield $s = \frac{1}{2}$ (cf. \cite{bonito_guermond_luddens}).

\end{remark}

The result of Theorem \ref{theorem:psi_est} immediately implies the following corollary (cf. \cite{kato}).

\begin{corollary} \label{corollary:eigs_epsilon}

If $\norm{\epsilon_1-\epsilon_0}_{L^\infty(B)}$ is sufficiently small and $\epsilon_1\to\epsilon_0$ in $L^{3/s}(B)$, then the set of electromagnetic $\delta$-Stekloff eigenvalues for $\epsilon_1$ converges to the set of electromagnetic $\delta$-Stekloff eigenvalues for $\epsilon_0$.

\end{corollary}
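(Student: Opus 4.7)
The plan is to combine Theorem \ref{theorem:psi_est} with the spectral correspondence of Proposition \ref{prop:Psi} and then invoke classical perturbation theory for discrete spectra, e.g. from Kato's monograph. Concretely, I would first fix a value $z\in\mathbb{R}$ that is not an electromagnetic $\delta$-Stekloff eigenvalue for $\epsilon_0$, which is possible by Theorem \ref{theorem:Fredholm}; the smallness hypothesis on $\epsilon_1-\epsilon_0$ combined with Theorem \ref{theorem:psi_est} then ensures that $z$ also remains outside the set of $\delta$-Stekloff eigenvalues for $\epsilon_1$ whenever $\epsilon_1$ is sufficiently close to $\epsilon_0$, so both resolvent-type operators $\boldPsi_{z,\epsilon_0}^{(\delta)}$ and $\boldPsi_{z,\epsilon_1}^{(\delta)}$ are well-defined on $\mathbf{H}(\sdiv_{\partial B}^0,\partial B)$.

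Next I would appeal to the estimate \eqref{psi_est2}, which yields
\begin{equation*}
\norm{\boldPsi_{z,\epsilon_1}^{(\delta)} - \boldPsi_{z,\epsilon_0}^{(\delta)}} \to 0 \quad\text{as}\quad \epsilon_1\to\epsilon_0 \text{ in } L^{3/s}(B),
\end{equation*}
provided the $L^\infty$ smallness holds uniformly. Since the operators in question are compact (as observed in Section \ref{sec_properties}, arising from the smoothing of $\mathcal{S}_{\delta/2}$ together with the bounded solution operator $\mathbf{T}_z^{(\delta)}$), norm convergence automatically gives upper semicontinuity of the spectrum: every eigenvalue of $\boldPsi_{z,\epsilon_0}^{(\delta)}$, which is isolated and of finite multiplicity, persists under small norm perturbations, and any sequence of eigenvalues of $\boldPsi_{z,\epsilon_1}^{(\delta)}$ that stays bounded away from $0$ must accumulate only at eigenvalues of $\boldPsi_{z,\epsilon_0}^{(\delta)}$. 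This is the content of standard results on perturbation of isolated eigenvalues of compact operators, as found in Kato.

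Finally I would translate these statements back to the $\delta$-Stekloff spectrum using Proposition \ref{prop:Psi} with $\delta_1=\delta_2=\delta/2$: a value $\lambda$ is a $\delta$-Stekloff eigenvalue for $\epsilon$ if and only if $(\lambda-z)^{-1}$ belongs to the spectrum of $\boldPsi_{z,\epsilon}^{(\delta)}$. Convergence of the eigenvalues of $\boldPsi_{z,\epsilon_1}^{(\delta)}$ to those of $\boldPsi_{z,\epsilon_0}^{(\delta)}$, with multiplicity preserved on each isolated disk, therefore yields convergence of the $\delta$-Stekloff eigenvalue sets under the inverse affine map $\mu\mapsto z+\mu^{-1}$.

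The only mildly delicate point is bookkeeping in the translation step: one should verify that eigenvalues of $\boldPsi_{z,\epsilon_1}^{(\delta)}$ cannot escape to $0$ in the limit, which would correspond to $\delta$-Stekloff eigenvalues escaping to infinity. This is controlled by the same norm estimate: on any compact subset of $\mathbb{C}$ disjoint from the spectrum of $\boldPsi_{z,\epsilon_0}^{(\delta)}$, the resolvent $(\mu I - \boldPsi_{z,\epsilon_0}^{(\delta)})^{-1}$ is uniformly bounded and perturbs to a bounded resolvent of $\boldPsi_{z,\epsilon_1}^{(\delta)}$, so no spurious eigenvalues appear or disappear within any fixed bounded region of the $\lambda$-plane avoiding $z$. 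This is the main technical point to cite carefully, and after it the corollary follows immediately from Theorem \ref{theorem:psi_est}.
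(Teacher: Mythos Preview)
Your proposal is correct and follows exactly the route the paper takes: the paper simply states that the corollary is an immediate consequence of Theorem~\ref{theorem:psi_est} together with the reference to \cite{kato}, and what you have written is a careful unpacking of that citation via the spectral correspondence in Proposition~\ref{prop:Psi}. The additional bookkeeping you outline (choice of $z$, compactness, the translation $\mu\mapsto z+\mu^{-1}$, and control of eigenvalues near $0$) is precisely the standard content implicit in the Kato reference.
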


%Our main interest in the estimate \eqref{psi_est2} is its implication that the eigenvalues are stable with respect to small changes in the permittivity $\epsilon_0$ (cf. \cite{kato}).

\subsection{Stability with respect to $\delta$} \label{subsec_pert_delta}

In this section we fix the permittivity $\epsilon$, and we show that the eigenvalues are stable with respect to changes in $\delta$. Since the smoothing parameter $\delta$ may be freely chosen, our main interest is in showing stability at the point $\delta=0$, which implies that the electromagnetic $\delta$-Stekloff eigenvalues converge to the standard electromagnetic Stekloff eigenvalues as $\delta\to0^+$. We accomplish this task using a factorization technique similar to Section \ref{subsec_pert_epsilon}. Before we begin, we remark that for technical reasons we will consider a slightly different solution operator $\tilde{\boldPsi}_z^{(\delta)}:\mathbf{H}(\sdiv_{\partial B}^0,\partial B)\to\mathbf{H}(\sdiv_{\partial B}^0,\partial B)$ defined by $\tilde{\boldPsi}_z^{(\delta)} := \mathcal{S}_\delta \mathbf{T}_z^{(\delta)}$. By applying Proposition \ref{prop:Psi} with $\delta_1=1$ and $\delta_2=0$, we observe that the spectrum of $\tilde{\boldPsi}_z^{(\delta)}$ coincides with that of $\boldPsi_z^{(\delta)}$, and as a consequence we may equivalently study the spectrum of $\tilde{\boldPsi}_z^{(\delta)}$. We will often measure norms of operators between spaces $\mathbf{H}^{\rho_1}(\sdiv_{\partial B}^0,\partial B)$ and $\mathbf{H}^{\rho_2}(\sdiv_{\partial B}^0,\partial B)$ for some $\rho_1,\rho_2\ge0$, and for convenience we denote this operator norm as $\norm{\cdot}_{\rho_1,\rho_2}$. \par

We now derive a factorization for the operator $\mathbf{T}_z^{(\delta)}$. For each $\delta\ge0$ we denote by $\mathbf{u}_\delta\in\Hcurl{B}$ the unique solution of \eqref{defT} for a given $\mathbf{h}\in\mathbf{H}(\sdiv_{\partial B}^0,\partial B)$, which implies that $\mathcal{S}_0 \mathbf{u}_{\delta,T} = \mathbf{T}_z^{(\delta)} \mathbf{h}$. We see that $\mathbf{u}_\delta - \mathbf{u}_0$ satisfies
\begin{align*}
\curl\curl(\mathbf{u}_\delta - \mathbf{u}_0) - k^2 \epsilon (\mathbf{u}_\delta - \mathbf{u}_0) &= \mathbf{0} \text{ in } B, \\
\un\times\curl(\mathbf{u}_\delta - \mathbf{u}_0) - z\mathcal{S}_\delta (\mathbf{u}_{\delta,T} - \mathbf{u}_{0,T}) &= z(\mathcal{S}_\delta - \mathcal{S}_0)\mathbf{u}_{0,T} \text{ on } \partial B,
\end{align*}
from which we obtain
\begin{equation*}
(\mathbf{T}_z^{(\delta)} - \mathbf{T}_z^{(0)})\mathbf{h} = \mathcal{S}_0 \left(\mathbf{u}_{\delta,T} - \mathbf{u}_{0,T}\right) = \mathbf{T}_z^{(\delta)} \biggr[ z(\mathcal{S}_\delta - \mathcal{S}_0)\mathbf{u}_{0,T} \biggr] = z\mathbf{T}_z^{(\delta)} (\mathcal{S}_\delta - \mathcal{S}_0) \mathbf{T}_z^{(0)} \mathbf{h}.
\end{equation*}
We note that the final equality follows from the fact that $(\mathcal{S}_\delta - \mathcal{S}_0) = (\mathcal{S}_\delta - \mathcal{S}_0) \mathcal{S}_0$. Since this equation holds for all $\mathbf{h}$, we arrive at the factorization
\begin{equation} \label{delta_factor}
\mathbf{T}_z^{(\delta)} - \mathbf{T}_z^{(0)} = z\mathbf{T}_z^{(\delta)} (\mathcal{S}_\delta - \mathcal{S}_0)\mathbf{T}_z^{(0)},
\end{equation}
which may be written as
\begin{equation} \label{delta_factor2}
\mathbf{T}_z^{(\delta)} \biggr[ I - z(\mathcal{S}_\delta - \mathcal{S}_0)\mathbf{T}_z^{(0)} \biggr] = \mathbf{T}_z^{(0)}.
\end{equation}
In order to invert the operator $I - z(\mathcal{S}_\delta - \mathcal{S}_0)\mathbf{T}_z^{(0)}:\mathbf{H}(\sdiv_{\partial B}^0,\partial B)\to\mathbf{H}(\sdiv_{\partial B}^0,\partial B)$ for small $\delta$, we show in the following lemma that $\mathcal{S}_\delta \to \mathcal{S}_0$ in a certain operator norm as $\delta\to0^+$.

\begin{lemma} \label{lemma:S_delta}

The operator $\mathcal{S}_\delta:\mathbf{H}^1(\div_{\partial B}^0,\partial B)\to\mathbf{H}(\sdiv_{\partial B}^0,\partial B)$ converges in operator norm to $\mathcal{S}_0:\mathbf{H}^1(\div_{\partial B}^0,\partial B)\to\mathbf{H}(\sdiv_{\partial B}^0,\partial B)$ as $\delta\to0^+$.

\end{lemma}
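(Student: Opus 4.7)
The plan is to reduce the claim to a scalar estimate by exploiting the spectral characterizations of the Sobolev spaces involved. Since any $\boldsymbol{\xi}\in\mathbf{H}^1(\sdiv_{\partial B}^0,\partial B)$ is divergence-free, only the contributions $\veccurl_{\partial B}Y_m$ appear in its expansion \eqref{eq:vector_expand}, so that $\boldsymbol{\xi}=\sum_{m=1}^\infty \boldsymbol{\xi}_m^{(2)}\veccurl_{\partial B}Y_m$ with $\|\boldsymbol{\xi}\|_{\mathbf{H}^1(\sdiv_{\partial B}^0,\partial B)}^2=\sum_{m=1}^\infty \mu_m^2 |\boldsymbol{\xi}_m^{(2)}|^2$. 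The definition of $\mathcal{S}_\delta$ then gives
\begin{equation*}
(\mathcal{S}_\delta-\mathcal{S}_0)\boldsymbol{\xi}=\sum_{m=1}^\infty (\mu_m^{-\delta}-1)\boldsymbol{\xi}_m^{(2)}\veccurl_{\partial B}Y_m,
\end{equation*}
and the equivalent norm on $\mathbf{H}(\sdiv_{\partial B}^0,\partial B)$ yields
\begin{equation*}
\norm{(\mathcal{S}_\delta-\mathcal{S}_0)\boldsymbol{\xi}}_{\mathbf{H}(\sdiv_{\partial B}^0,\partial B)}^2=\sum_{m=1}^\infty \mu_m (\mu_m^{-\delta}-1)^2|\boldsymbol{\xi}_m^{(2)}|^2.
\end{equation*}
A standard computation identifying the multiplier norm for a diagonal operator between weighted $\ell^2$ spaces then shows that
\begin{equation*}
\norm{\mathcal{S}_\delta-\mathcal{S}_0}_{1,0}=\sup_{m\ge1}\mu_m^{-1/2}\,|\mu_m^{-\delta}-1|,
\end{equation*}
so the task reduces to proving that this supremum tends to zero as $\delta\to0^+$.

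To dispatch this scalar bound, I would split the index set according to whether $\mu_m\ge 1$ or $\mu_m<1$. In the first regime, since $\mu_m^{-\delta}\le1$, the elementary inequality $1-e^{-x}\le x$ for $x\ge0$ applied with $x=\delta\log\mu_m$ gives
\begin{equation*}
\mu_m^{-1/2}(1-\mu_m^{-\delta})\le\delta\,\mu_m^{-1/2}\log\mu_m\le\delta\cdot\sup_{t\ge1}t^{-1/2}\log t,
\end{equation*}
and the last supremum is finite (attained at $t=e^2$). In the second regime, only finitely many eigenvalues $\mu_m\in[\mu_1,1)$ exist by Theorem \ref{theorem_LB}, and for each such $m$ one has $\mu_m^{-1/2}|\mu_m^{-\delta}-1|=\mu_m^{-1/2}(e^{\delta|\log\mu_m|}-1)\to 0$ as $\delta\to 0^+$. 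Taking the supremum over the finite set and combining with the uniform bound $C\delta$ from the first regime yields $\norm{\mathcal{S}_\delta-\mathcal{S}_0}_{1,0}\to0$ as $\delta\to0^+$.

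The main (mild) obstacle is bookkeeping: one must verify that the norm identity reduces to the scalar supremum above, which requires recognizing $\mathcal{S}_\delta-\mathcal{S}_0$ as diagonal in the $\{\veccurl_{\partial B}Y_m\}$ basis and checking that the weights on the domain and codomain norms combine as $\mu_m^{1-2}=\mu_m^{-1}$. Once the scalar problem is isolated, the two-regime argument above is straightforward and does not require any further regularity information about the surface or the operator.
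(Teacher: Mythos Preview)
Your proof is correct and follows essentially the same route as the paper: both reduce the question to bounding the scalar multiplier $\mu_m^{-1/2}\lvert 1-\mu_m^{-\delta}\rvert$, and both hinge on the boundedness of $t^{-1/2}\log t$ on $[1,\infty)$ (with extremum at $t=e^2$). The only notable difference is that the paper records the explicit rate $\norm{\mathcal{S}_\delta-\mathcal{S}_0}_{1,0}\le C\delta$, which it uses downstream in \eqref{T_delta} and Theorem~\ref{theorem:Psi_delta}; your argument yields this rate directly for the regime $\mu_m\ge1$ and recovers it on the finite set $\{\mu_m<1\}$ by the same mean-value reasoning, so nothing is lost.
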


\begin{proof}

For a given $\mathbf{h}\in\mathbf{H}^1(\sdiv_{\partial B}^0,\partial B)$ with eigenfunction expansion
\begin{equation*}
\mathbf{h} = \sum_{m=1}^\infty \mathbf{h}_m^{(2)} \veccurl_{\partial B} Y_m,
\end{equation*}
we observe that
\begin{align}
\begin{split} \label{Sdelta_est}
\norm{(\mathcal{S}_\delta - \mathcal{S}_0)\mathbf{h}}_{\mathbf{H}(\sdiv_{\partial B}^0,\partial B)} &= \norm{\sum_{m=1}^\infty (\mu_m^{-\delta} - 1) \mathbf{h}_m^{(2)} \veccurl_{\partial B} Y_m}_{\mathbf{L}_t^2(\partial B)} \\
&= \left( \sum_{m=1}^\infty \mu_m \abs{ (\mu_m^{-\delta} - 1) \mathbf{h}_m^{(2)} }^2 \right)^{1/2} \\
&= \left( \sum_{m=1}^\infty \abs{\frac{1-\mu_m^{-\delta}}{\mu_m^{1/2}}}^2 \mu_m^2 \abs{\mathbf{h}_m^{(2)}}^2 \right)^{1/2}.
\end{split}
\end{align}
For each $m\in\mathbb{N}$ it is clear that
\begin{equation*}
\lim_{\delta\to0^+} \frac{1-\mu_m^{-\delta}}{\mu_m^{1/2}} = 0,
\end{equation*}
but the convergence may not be uniform in $m$. However, we now show that there exists some $m_*\in\mathbb{N}$ independent of $\delta$ for which the sequence $\left\{\frac{1-\mu_m^{-\delta}}{\mu_m^{1/2}}\right\}_{m\ge m_*}$ is non-increasing, which will allow us to conclude the desired result. For each $\delta>0$ we consider the function $\varphi_\delta(t) := \frac{1-t^{-\delta}}{t^{1/2}}$ on the interval $[1,\infty)$. We see that
\begin{equation*}
\varphi_\delta'(e^2) = \frac{1}{2}e^{-2\delta-3} ( 2\delta + 1 - e^{2\delta} ),
\end{equation*}
and since $2\delta+1 - e^{2\delta} < 0$ for all $\delta>0$ we conclude that $\varphi_\delta$ is decreasing on the interval $(e^2,\infty)$ for all $\delta>0$. Thus, by choosing $m_*$ such that $\mu_{m_*} > e^2$ we have a non-increasing sequence $\left\{\frac{1-\mu_m^{-\delta}}{\mu_m^{1/2}}\right\}_{m\ge m_*}$, and we may split the final series in \eqref{Sdelta_est} to obtain
\begin{equation*}
\norm{(\mathcal{S}_\delta - \mathcal{S}_0)\mathbf{h}}_{\mathbf{H}(\sdiv_{\partial B}^0,\partial B)} \le \max_{1\le m\le m_*} \abs{\frac{1-\mu_m^{-\delta}}{\mu_m^{1/2}}} \norm{\mathbf{h}}_{\mathbf{H}^1(\sdiv_{\partial B}^0,\partial B)}.
\end{equation*}
This result leads us to the estimate
\begin{equation*}
\norm{\mathcal{S}_\delta - \mathcal{S}_0}_{1,0} \le \max_{1\le m\le m_*} \abs{\frac{1-\mu_m^{-\delta}}{\mu_m^{1/2}}},
\end{equation*}
and by applying the Mean Value Theorem to each of the functions $\psi_m(t) := 1-\mu_m^{-t}$ on the interval $[0,\delta]$ we obtain
\begin{equation} \label{S_delta_est}
\norm{\mathcal{S}_\delta - \mathcal{S}_0}_{1,0} \le C\delta,
\end{equation}
where the constant $C$ is independent of $\delta$. We note that this constant is related to the maximum in the previous inequality, which is over a finite collection of terms and hence exists. This final estimate provides the desired result.
\end{proof}

We return to \eqref{delta_factor2}, and by the result of Lemma \ref{lemma:S_delta} we may invert the operator in brackets for sufficiently small $\delta>0$ in order to arrive at the representation
\begin{equation*}
\mathbf{T}_z^{(\delta)} = \mathbf{T}_z^{(0)} \biggr[ I - z(\mathcal{S}_\delta - \mathcal{S}_0)\mathbf{T}_z^{(0)} \biggr]^{-1}.
\end{equation*}
From a Neumann series expansion and \eqref{S_delta_est} we have
\begin{equation*}
\norm{\mathbf{T}_z^{(\delta)}}_{0,1} \le \frac{\norm{\mathbf{T}_z^{(0)}}_{0,1}}{1 - C_0\delta}
\end{equation*}
for sufficiently small $\delta>0$, where $C_0$ is a constant independent of $\delta$, and it follows that $\norm{\mathbf{T}_z^{(\delta)}}_{0,1}$ is uniformly bounded for small $\delta>0$. With this result in hand we may use \eqref{delta_factor} and Lemma \ref{lemma:S_delta} to immediately obtain the estimate
\begin{equation} \label{T_delta}
\norm{\mathbf{T}_z^{(\delta)} - \mathbf{T}_z^{(0)}}_{0,1} \le C \delta
\end{equation}
for sufficiently small $\delta>0$. It now remains to establish the same estimate for the difference $\tilde{\boldPsi}_z^{(\delta)} - \tilde{\boldPsi}_z^{(0)}$, which we give in the following theorem.

\begin{theorem} \label{theorem:Psi_delta}

If $\delta>0$ is sufficiently small, then there exists a constant $C$ independent of $\delta$ for which
\begin{equation} \label{Psi_delta}
\norm{\tilde{\boldPsi}_z^{(\delta)} - \tilde{\boldPsi}_z^{(0)}}_{0,0} \le C \delta.
\end{equation}

\end{theorem}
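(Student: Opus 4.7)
The plan is to reduce \eqref{Psi_delta} to the two convergence results already established in this subsection: the rate $\|\mathbf{T}_z^{(\delta)}-\mathbf{T}_z^{(0)}\|_{0,1}\le C\delta$ from \eqref{T_delta} and the rate $\|\mathcal{S}_\delta-\mathcal{S}_0\|_{1,0}\le C\delta$ from Lemma \ref{lemma:S_delta}. The key algebraic step is the telescoping decomposition
\begin{equation*}
\tilde{\boldPsi}_z^{(\delta)} - \tilde{\boldPsi}_z^{(0)} = \mathcal{S}_\delta \mathbf{T}_z^{(\delta)} - \mathcal{S}_0 \mathbf{T}_z^{(0)} = \mathcal{S}_\delta\bigl(\mathbf{T}_z^{(\delta)} - \mathbf{T}_z^{(0)}\bigr) + \bigl(\mathcal{S}_\delta - \mathcal{S}_0\bigr)\mathbf{T}_z^{(0)},
\end{equation*}
after which each summand is handled by routing through the intermediate space $\mathbf{H}^1(\sdiv_{\partial B}^0,\partial B)$.

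For the second term, I would combine Lemma \ref{lemma:S_delta} with the fact that $\mathbf{T}_z^{(0)}:\mathbf{H}(\sdiv_{\partial B}^0,\partial B)\to\mathbf{H}^1(\sdiv_{\partial B}^0,\partial B)$ is bounded by Proposition \ref{prop:ST}, immediately giving $\|(\mathcal{S}_\delta-\mathcal{S}_0)\mathbf{T}_z^{(0)}\|_{0,0}\le \|\mathcal{S}_\delta-\mathcal{S}_0\|_{1,0}\|\mathbf{T}_z^{(0)}\|_{0,1}\le C\delta$.

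For the first term, I would factor as $\mathcal{S}_\delta:\mathbf{H}^1(\sdiv_{\partial B}^0,\partial B)\to\mathbf{H}(\sdiv_{\partial B}^0,\partial B)$ composed with $\mathbf{T}_z^{(\delta)}-\mathbf{T}_z^{(0)}:\mathbf{H}(\sdiv_{\partial B}^0,\partial B)\to\mathbf{H}^1(\sdiv_{\partial B}^0,\partial B)$. The second factor has norm at most $C\delta$ by \eqref{T_delta}. For the first factor I would use the eigenfunction expansion: since $\mathcal{S}_\delta$ acts as multiplication by $\mu_m^{-\delta}$ on the $\veccurl_{\partial B} Y_m$ coefficients, and since $\mu_1>0$ by Theorem \ref{theorem_LB}, one obtains $\|\mathcal{S}_\delta\|_{0,0}\le\max(1,\mu_1^{-\delta})$, which is uniformly bounded for small $\delta>0$. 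Combining this with the continuous embedding $\mathbf{H}^1(\sdiv_{\partial B}^0,\partial B)\hookrightarrow\mathbf{H}(\sdiv_{\partial B}^0,\partial B)$ gives uniform boundedness of $\|\mathcal{S}_\delta\|_{1,0}$, so the first term is also bounded by $C\delta$. Summing the two contributions yields \eqref{Psi_delta}.

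There is no substantive obstacle here, since all the technical work has been done in \eqref{T_delta}, Lemma \ref{lemma:S_delta}, and Proposition \ref{prop:ST}; the only point requiring a line of verification is the uniform boundedness of $\|\mathcal{S}_\delta\|$ in the relevant operator norm as $\delta\to 0^+$, which follows immediately from the spectral characterization of the Sobolev norms and the strict positivity of $\mu_1$.
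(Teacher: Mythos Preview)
Your proposal is correct and follows essentially the same approach as the paper: the same telescoping decomposition $\mathcal{S}_\delta\mathbf{T}_z^{(\delta)}-\mathcal{S}_0\mathbf{T}_z^{(0)}=\mathcal{S}_\delta(\mathbf{T}_z^{(\delta)}-\mathbf{T}_z^{(0)})+(\mathcal{S}_\delta-\mathcal{S}_0)\mathbf{T}_z^{(0)}$, the same routing through $\mathbf{H}^1(\sdiv_{\partial B}^0,\partial B)$, and the same appeal to \eqref{T_delta}, \eqref{S_delta_est}, and Proposition~\ref{prop:ST}. The only cosmetic difference is that the paper obtains the uniform bound on $\|\mathcal{S}_\delta\|_{1,0}$ as an immediate consequence of Lemma~\ref{lemma:S_delta} (convergence to $\mathcal{S}_0$ implies boundedness), whereas you verify it directly from the spectral multiplier description; both justifications are equally short.
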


\begin{proof}

By the triangle inequality we have
\begin{align*}
\norm{\tilde{\boldPsi}_z^{(\delta)} - \tilde{\boldPsi}_z^{(0)}}_{0,0} &= \norm{\mathcal{S}_\delta \mathbf{T}_z^{(\delta)} - \mathcal{S}_0 \mathbf{T}_z^{(0)}}_{0,0} \\
&\le \norm{\mathcal{S}_\delta ( \mathbf{T}_z^{(\delta)} - \mathbf{T}_z^{(0)} )}_{0,0} + \norm{(\mathcal{S}_\delta - \mathcal{S}_0) \mathbf{T}_z^{(0)}}_{0,0} \\
&\le \norm{\mathcal{S}_\delta}_{1,0} \norm{\mathbf{T}_z^{(\delta)} - \mathbf{T}_z^{(0)}}_{0,1}  + \norm{\mathcal{S}_\delta - \mathcal{S}_0}_{1,0} \norm{\mathbf{T}_z^{(0)}}_{0,1}.
\end{align*}
Noting that $\norm{\mathcal{S}_\delta}_{1,0}$ is uniformly bounded for small $\delta$ as a result of Lemma \ref{lemma:S_delta}, the desired estimate \eqref{Psi_delta} follows from combining \eqref{S_delta_est} and \eqref{T_delta}.
\end{proof}

The norm convergence of the sequence $\left\{\tilde{\boldPsi}_z^{(\delta)}\right\}_{\delta>0}$ that we obtained in Theorem \ref{theorem:Psi_delta} immediately implies the following corollary (cf. \cite{kato}).

\begin{corollary} \label{corollary:eigs_delta}

The electromagnetic $\delta$-Stekloff eigenvalues converge to the standard electromagnetic Stekloff eigenvalues as $\delta\to0^+$.

\end{corollary}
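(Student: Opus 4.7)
The plan is to combine Theorem~\ref{theorem:Psi_delta} with the spectral correspondence recorded in Proposition~\ref{prop:Psi} and invoke standard perturbation theory for compact operators. First, I would apply Proposition~\ref{prop:Psi} with $\delta_1 = \delta$ and $\delta_2 = 0$ for each $\delta \ge 0$, so that a scalar $\lambda \in \mathbb{C}$ is an electromagnetic $\delta$-Stekloff eigenvalue precisely when $(\lambda - z)^{-1}$ is an eigenvalue of $\tilde{\boldPsi}_z^{(\delta)} = \mathcal{S}_\delta \mathbf{T}_z^{(\delta)}$ acting on $\mathbf{H}(\sdiv_{\partial B}^0, \partial B)$. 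The $\delta = 0$ case of this correspondence identifies the nonzero eigenvalues of $\tilde{\boldPsi}_z^{(0)}$ with the standard electromagnetic Stekloff eigenvalues. Each $\tilde{\boldPsi}_z^{(\delta)}$ is compact: for $\delta > 0$ by Proposition~\ref{prop:opSdelta}, and for $\delta = 0$ by Proposition~\ref{prop:ST} together with the compact embedding of $\mathbf{H}^1(\sdiv_{\partial B}^0, \partial B)$ into $\mathbf{H}(\sdiv_{\partial B}^0, \partial B)$.

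Next, Theorem~\ref{theorem:Psi_delta} supplies the operator-norm convergence $\tilde{\boldPsi}_z^{(\delta)} \to \tilde{\boldPsi}_z^{(0)}$ as $\delta \to 0^+$ in $\mathbf{H}(\sdiv_{\partial B}^0, \partial B)$. Standard perturbation theory for compact operators (cf.~\cite{kato}) then guarantees that for each isolated eigenvalue $\mu_0 \neq 0$ of $\tilde{\boldPsi}_z^{(0)}$ of algebraic multiplicity $k$, every sufficiently small disk around $\mu_0$ contains exactly $k$ eigenvalues of $\tilde{\boldPsi}_z^{(\delta)}$ (counted with algebraic multiplicity) for all sufficiently small $\delta > 0$, and these eigenvalues converge to $\mu_0$. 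Conversely, any accumulation point of nonzero eigenvalues of $\tilde{\boldPsi}_z^{(\delta)}$ as $\delta \to 0^+$ must lie in the spectrum of $\tilde{\boldPsi}_z^{(0)}$.

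Finally, I would transport this spectral convergence through the continuous M\"obius map $\mu \mapsto z + \mu^{-1}$ on $\mathbb{C}\setminus\{0\}$, which relates the eigenvalues of $\tilde{\boldPsi}_z^{(\delta)}$ to the electromagnetic $\delta$-Stekloff eigenvalues via Proposition~\ref{prop:Psi}. Since the compactness of $\tilde{\boldPsi}_z^{(\delta)}$ forces its spectrum to accumulate only at $0$, the associated Stekloff eigenvalues accumulate only at infinity, and the local perturbation statement above suffices to conclude the asserted set-theoretic convergence. The only technical wrinkle is bookkeeping for algebraic multiplicities and for eigenvalues that could a priori drift off to infinity as $\delta \to 0^+$; both are handled cleanly within the Kato framework applied to a norm-continuous family of compact operators, so no serious obstacle remains.
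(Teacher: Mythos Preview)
Your proposal is correct and follows essentially the same approach as the paper: the paper offers no detailed proof of this corollary, merely noting that the norm convergence established in Theorem~\ref{theorem:Psi_delta} together with standard perturbation theory (citing \cite{kato}) yields the result. Your write-up simply fills in the details of that inference—invoking Proposition~\ref{prop:Psi} (with $\delta_1=\delta$, $\delta_2=0$) for the spectral correspondence, compactness of $\tilde{\boldPsi}_z^{(\delta)}$, and the transport via $\mu\mapsto z+\mu^{-1}$—all of which are exactly the ingredients the paper has in mind.
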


%\section{Numerical examples }
%
%\label{sec_numerics}
%
%This section contains the numerical examples.

\section{Conclusion}

\label{sec_conclusion}

We have shown that, with a slight modification of the boundary condition in the standard electromagnetic Stekloff eigenvalue problem, infinitely many eigenvalues exist even for an absorbing medium, and these eigenvalues are stable with respect to changes in the material coefficients and the smoothing parameter $\delta$. Both of these results are useful in establishing applicability and robustness of nondestructive evaluation methods based on using eigenvalues as potential target signatures, and it might be useful to apply the same ideas to other types of problems besides Stekloff eigenvalues and their generalizations. In particular, the recent introduction in \cite{cogar_monk} of a class of eigenvalues that depends on a tuning parameter $\gamma$ may allow for some control over the sensitivity of eigenvalues to changes in the medium, but existence results for an absorbing medium are lacking. However, a similar introduction of a smoothing operator into this problem is not straightforward, as the eigenparameter no longer appears in the boundary condition. The effort to find the proper trace class modification of this problem is ongoing.

%\bibliographystyle{AIMS}
%\bibliography{myreferences}

\providecommand{\href}[2]{#2}
\providecommand{\arxiv}[1]{\href{http://arxiv.org/abs/#1}{arXiv:#1}}
\providecommand{\url}[1]{\texttt{#1}}
\providecommand{\urlprefix}{URL }

\end{document}